\documentclass{amsart}
\usepackage{euscript,graphicx,epstopdf,amscd,amsgen,amsfonts,amssymb,latexsym,amsmath,amsthm,graphicx,mathrsfs,times,color,overpic,bbm}

\usepackage{xcolor}
\definecolor{forestgreen}{rgb}{0.0, 0.27, 0.13}

\newtheorem{theorem}{Theorem}[section]
\newtheorem{lemma}[theorem]{Lemma}
\newtheorem{corollary}[theorem]{Corollary}
\newtheorem{proposition}[theorem]{Proposition}
\newtheorem{claim}[theorem]{Claim}

\theoremstyle{definition}
\newtheorem{example}[theorem]{Example}
\newtheorem{remark}[theorem]{Remark}

\def\bN{\mathbb{N}}

\def\bR{\mathbb{R}}
\def\bE{\mathbb{E}}

\def\bS{\mathbb{S}}
\def\bP{\mathbb{P}}

\def\cF{\mathcal{F}}

\def\bp{\mathbf{p}}

\DeclareMathOperator{\arc}{Arc}
\DeclareMathOperator{\Lip}{Lip}
\newcommand{\eqdef}{\stackrel{\scriptscriptstyle\rm def}{=}}

\numberwithin{equation}{section}

%-----------------------------------------

\DeclareMathSymbol{\varnothing}{\mathord}{AMSb}{"3F}
\title[Contracting on average IFS]{Contracting on average iterated function systems\\ by metric change}

\author[K. Gelfert]{Katrin Gelfert}
\address{Instituto de Matem\'atica, Universidade Federal do Rio de Janeiro, Cidade Universit\'aria - Ilha do Fund\~ao, Rio de Janeiro 21945-909,  Brazil}
\email{gelfert@im.ufrj.br}

\author[G. Salcedo]{Graccyela R. Salcedo}
\address{Instituto de Matem\'atica, Universidade Federal do Rio de Janeiro, Cidade Universit\'aria - Ilha do Fund\~ao, Rio de Janeiro 21945-909,  Brazil}
\email{salcedopg@im.ufrj.br}

\begin{document}

\begin{abstract}
We study contraction conditions for an iterated function system of continuous maps on a metric space which are chosen randomly, identically and independently. We investigate metric changes, preserving the topological structure of the space, which turn the IFS into one which is contracting on average. For the particular case of a system of $C^1$-diffeomorphisms of the circle which is proximal and does not have a probability measure simultaneously invariant by every map, we derive a strongly equivalent metric which contracts on average. 
\end{abstract}

\begin{thanks}
{This study was financed in part by the Coordena{\c c}{\~a}o de Aperfei{\c c}oamento de Pessoal de N{\`i}vel Superior (CAPES) -- Finance Code 001 and also partially by CNPq grants (Brazil).}
\end{thanks}

\keywords{Contracting on average, iterated function systems, synchronization, random dynamical systems}
\subjclass[2000]{}

\maketitle

%---------------------------------------------------------------------------
\section{Introduction}
%---------------------------------------------------------------------------

Given a complete metric space $(M,d)$, an \emph{iterated function system (IFS)} is a finite set  $\mathcal{F} = \{f_0,\ldots,f_{N-1}\}$, $N\ge2$, of continuous maps $f_i\colon M\to M$, $i=0,\ldots,N-1$. One important goal is to understand the asymptotic behavior of consecutive concatenations of maps in $\cF$ where the choice at each step is according to probabilities $p_i$, $i=0,\ldots,N-1$, of some probability vector $\bp=(p_0,\ldots,p_{N-1})$, defining the triple $(\cF,\bp,d)$. This behavior is very well understood under the hypothesis that every map contracts uniformly. However, this is a rather strong requirement. The first rigorous treatment of contracting IFSs was done in the seminal work \cite{HUTCH}, which established the existence and uniqueness of the attractor and the stationary measure.
Several weaker hypotheses imply also good stochastic properties of the associated Markov chain generated by the IFS. For example, an IFS which is contracting on average (the concatenated maps do not necessarily shrink the distance between two points at every step and everywhere, but in expectation they do; see  \eqref{def:CA}) has a contracting (hence unique) stationary Borel probability measure. 

Though, much less is known for an IFS of maps which either ``just do not expand'' or have simultaneously some  ``contracting regions'' and  some ``expanding regions'' or even repelling fixed points (compare the examples depicted in Figure \ref{fig.3}). In such a general context, to gain any average contraction, one least topological requirement was coined in \cite{SteDav2} is that ``the orbit of a point wanders sufficiently around the space to pick up an average contraction''. Such property would, for example, call for an IFS which is minimal%
\footnote{The IFS $\cF$ is \emph{forward minimal} if for every nonempty closed set $A\subset M$ satisfying $f_i(A)\subset A$ for every $i=0,\ldots,N-1$ it holds $A=M$. The IFS $\cF$ is \emph{backward minimal} if $\cF^{-1}=\{f_i^{-1}\}$ is forward minimal.}. 

Observe that the existence and uniqueness of a Markov chain-stationary measure does not depend on the metric (as long as metrics generate the same topology), while contraction properties do.  A natural question is when for $(\cF,\bp,d)$ there exists some metric $D$ on $M$ equivalent to $d$ such that $(\cF,\bp,D)$ is, for example, contracting on average. Besides surveys such as \cite{Kai:81,DF,Ios:09}, we point out \cite{sten} which provides an ample discussion of many kinds of contracting conditions  and \cite{LesSniStr:20} which reviews IFSs from a more topological point of view, both mentioning also the method of metric change. In Section \ref{sec:impact} we briefly discuss some stochastic properties that remain true for the contracting on average IFS after a metric change. 

One step towards this direction was done in \cite{KatSten} where a convenient metric change  turns a backward minimal IFS of homeomorphisms on $\bS^1$ into a non-expansive on average one (see also \cite{MR4036718}). 
Here, for a non-expansive on average IFS of Lipschitz maps on a compact metric space, we give (sufficient and necessary) conditions to guarantee a metric change which turns an IFS into a contracting on average one. Moreover, we thoroughly discuss several local and global contraction-like properties intimately related with it.

The existence of a metric which makes a \emph{dynamical system}  ``contracting'' or ``expanding'' has been widely studied. Let us cite some key results.  If a map ``eventually'' contracts (expands) in the sense that its $k$th iteration has this property, a convenient change of the metric turns it into a contraction (expansion)  in its first iteration (see, for example, \cite[Chapter 4]{PFUM}). As explained, for example, in \cite [Chapter 4]{Shu:87} a  hyperbolic structure of a diffeomorphism is independent of the Riemannian metric on the ambient manifold. In \cite{FD2,Fat:89}, using  Frink's metrization theorem, it is shown that for any expansive homeomorphism of a compact topological space there exists an equivalent metric such that the map contracts (expands) on stable (unstable) sets. 
Analogously, there exists a metric which turns a (positively) expansive continuous map of a compact metric space into an expanding one (see \cite[Chapter 4]{PFUM}).%
\footnote{The existence of so-called adapted metrics in partially hyperbolic dynamics and dynamics with a dominated splitting was investigated in \cite{Gou:07,HiPuSh}. The use of adapted norms and metrics is also common in the study of nonuniform hyperbolicity when analyzing the size of local un-/stable manifolds  (see, for example, \cite{BarPes:07} in the $C^{1+\varepsilon}$ case and \cite[Section 8]{AbdBonCro:11} for a $C^1$ dominated setting).}

Before stating our main results, let us introduce the main contraction properties which we are going to investigate. 
A precursor assumed in  \cite{DF} requires ``contraction in mean'': $f_i$ are Lipschitz  with Lipschitz constants $\Lip(f_i)$, $i=0,\ldots,N-1$, satisfying
\[
	\sum_{i=0}^{N-1}p_i\Lip(f_i)
	<1.
\] 
Weaker concepts are proposed in \cite{BDEG88,PeignM93}, where $(\cF,\bp,d)$ is assumed to \emph{contract on average (CA)} in the sense that there exists $\lambda\in(0,1)$ so that
\begin{equation}\label{def:CA}
	\sum_{i=0}^{N-1}p_id(f_i(x),f_i(y))
	\le \lambda d(x,y)
	\quad\text{ for every } x,y\in M.
\end{equation}
 In fact, \cite{BDEG88} requires even weaker assumptions allowing for place-dependent probabilities and for contraction in $L^q$ norm, $q>0$, while  \cite{PeignM93} assumes ``contraction on average after some iterations'' (that is, it is $k$-eventually contracting in average, for some $k\in\bN$, in the sense defined in \eqref{eq01} below). 
Less is known about an IFS if we put only the weaker hypothesis that $(\cF,\bp,d)$ is \emph{non-expansive on average (NEA)} in the sense that
\[
	\sum_{i=0}^{N-1}p_id(f_i(x),f_i(y))
	\le  d(x,y)
	\quad\text{ for every } x,y\in M.
\]
This  property implies, for example, that the associated Markov chain is non-expansive (see \cite{Sza:03} and references therein).  
There are variations of these definitions such as being \emph{eventually strongly contracting on average (ESCA)}, \emph{synchronizing on average (SA)}, \emph{locally eventually contractive on average (LECA)}, and \emph{proximal}, that we postpone to Section \ref{sec:syncont}. To simplify the exposition, we will mainly use these short notations.

Recall that metrics $d$ and $D$ on some common space $M$ are \emph{(topologically) equivalent} if they generate the same topology. They are \emph{strongly equivalent} if there exist positive constants $a$ and $b$ such that $ad(x,y)\leq D(x,y)\leq bd(x,y)$ for every $x,y \in M$. Clearly, strong equivalence implies topological one, but not vice versa. Given $\alpha\in(0,1)$, note that $d^\alpha\colon M\times M\to[0,\infty)$ defined by $d^\alpha(x,y)\eqdef (d(x,y))^\alpha$ is a metric on $M$, and $d$ and $d^\alpha$ are equivalent.

\begin{figure}[h] 
 \begin{overpic}[scale=.5]{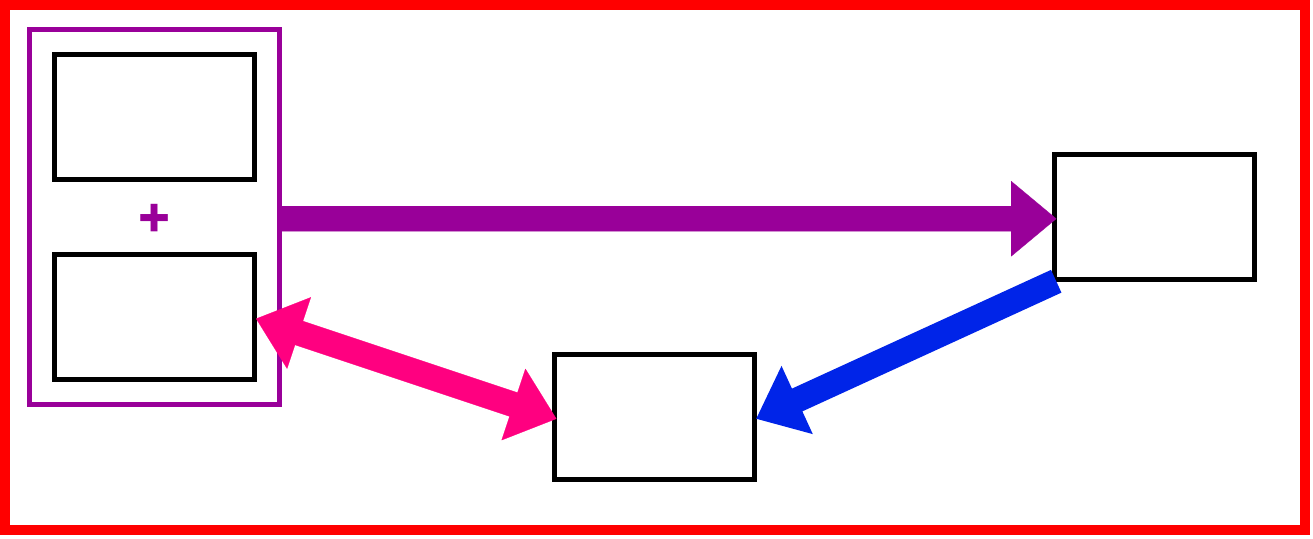}
	\put(54,35){{\textcolor{red}{NEA ({\tiny{assuming compactness}})}}}
	\put(5,30){{ESCA}}
	\put(84,22.2){{CA}}
 	\put(5,14.5){{LECA}}
	\put(46,7.5){{SA}}
	\put(40,27){\scriptsize{\textcolor{magenta}{(Theorem \ref{pro:1})}}}
	\put(20,5){\scriptsize{\textcolor{red}{(Lemma \ref{lem:2})}}}
	\put(69,11){\scriptsize{\textcolor{blue}{for all }}}
	\put(69,8){\scriptsize{\textcolor{blue}{equivalent metrics }}}
	\put(69,5){\scriptsize{\textcolor{blue}{ (Proposition \ref{pro:2})}}}
 \end{overpic}
  \caption{Some implications, assuming non-expansive on average (NEA) on a compact metric space: eventually strongly contracting on average (ESCA), contracting on average (CA), synchronizing on average (SA), locally eventually contractive on average (LECA).}
 \label{figure.4}
\end{figure}

Let us now state our main results. Let $\Sigma^+_N\eqdef \{0,\ldots,N-1\}^\bN$ be the space of one-sided sequences and denote by $\mu$ the Bernoulli measure on $\Sigma^+_N$ determined by $\bp$.
For any sequence $\xi=(\xi_1 \xi_2 \ldots)\in\Sigma^+_N$, $n\geq 1$, and  $x\in M$ let
\[    f^n_{\xi}(x)
    \eqdef f_{\xi_1\ldots\xi_n}
    \eqdef f_{\xi_n}\circ f_{\xi_{n-1}}\circ\cdots\circ{f_{\xi_1}}(x), 
    \quad  
    f^0_{\xi}(x)\eqdef x.
\]Given $x,y\in M$ and $n\in\bN$, let
\[    Z_{n,d}^{x,y}(\xi)
    \eqdef d(f^n_{\xi}(x),f^n_{\xi}(y)),
    \quad
    Z_{0,d}^{x,y}(\xi)
    \eqdef d(x,y).
\]Given $\lambda>0$ and $n\in\bN$, consider the metric defined by
\[%\{align}\label{eq:defmetric}
 d_{n,\lambda}(x,y)
    \eqdef d(x,y)
    +\frac{1}{\lambda^{1/n}}\bE (Z_{1,d}^{x,y})
    +\cdots+\frac{1}{\lambda^{(n-1)/n}}\bE (Z_{n-1,d}^{x,y}),
\]
where  $\bE(\cdot)$ denotes the expected value according to the probability distribution.

\begin{theorem}\label{pro:1} 
Consider a compact metric space $(M,d)$ and a triple $(\cF,\bp,d)$, where $\bp$ is a non-degenerate probability vector and $\cF$ is an IFS of Lipschitz maps that is non-expansive on average. 
If $(\cF,\bp,d)$ satisfies LECA and ESCA, then there exist $\lambda\in(0,1)$ and $n\in\mathbb{N}$ such that $(\cF,\bp,d_{n,\lambda})$ is contracting on average. Moreover, LECA is equivalent to SA.
\end{theorem}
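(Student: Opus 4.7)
The plan is to read $n$ and $\lambda$ directly from the ESCA hypothesis and verify CA for $d_{n,\lambda}$ through a short algebraic manipulation. The engine of the proof is the Markov-style identity
\[
\sum_{i=0}^{N-1} p_i\,\bE\bigl(Z_{k,d}^{f_i(x),f_i(y)}\bigr) \;=\; \bE\bigl(Z_{k+1,d}^{x,y}\bigr),
\]
which expresses the shift invariance of the Bernoulli measure: conditioning the $(k{+}1)$-step iterate on the first symbol $\xi_1=i$ reduces it to a $k$-step iterate starting from $(f_i(x),f_i(y))$.

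With $n\in\bN$ and $\lambda\in(0,1)$ supplied by ESCA so that $\bE(Z_{n,d}^{x,y})\le\lambda\,d(x,y)$ holds uniformly, the first task is to confirm that $d_{n,\lambda}$ is a metric strongly equivalent to $d$. Symmetry and positivity follow from $d_{n,\lambda}\ge d$; the triangle inequality uses that each $(x,y)\mapsto \bE(Z_{k,d}^{x,y})$ is a pseudometric, being an expectation of the pseudometrics $d(f_\xi^k(\cdot),f_\xi^k(\cdot))$. Applying NEA inductively in $k$ gives $\bE(Z_{k,d}^{x,y})\le d(x,y)$ for every $k$, so
\[
d(x,y)\;\le\; d_{n,\lambda}(x,y) \;\le\; \Bigl(\sum_{k=0}^{n-1}\lambda^{-k/n}\Bigr)\,d(x,y),
\]
yielding strong equivalence.

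The CA property then becomes a direct algebraic check. Writing $a_k\eqdef\bE(Z_{k,d}^{x,y})$ and applying the shift identity term by term,
\[
\sum_i p_i\, d_{n,\lambda}(f_i(x),f_i(y)) = a_1 + \lambda^{-1/n}a_2 + \cdots + \lambda^{-(n-1)/n}a_n,
\]
while
\[
\lambda^{1/n} d_{n,\lambda}(x,y) = \lambda^{1/n}a_0 + a_1 + \lambda^{-1/n}a_2 + \cdots + \lambda^{-(n-2)/n}a_{n-1}.
\]
The internal $a_1,\dots,a_{n-1}$ terms cancel, and ESCA controls what remains: $\lambda^{-(n-1)/n}a_n \le \lambda^{-(n-1)/n}\cdot\lambda\,a_0 = \lambda^{1/n}a_0$. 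Hence $(\cF,\bp,d_{n,\lambda})$ is CA with factor $\lambda^{1/n}<1$; the geometric weights $\lambda^{-k/n}$ are engineered precisely so that this telescoping works out.

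For the equivalence LECA $\Leftrightarrow$ SA, which I expect to be the main obstacle, the argument will hinge on the precise definitions provided later in Section \ref{sec:syncont}. The implication SA $\Rightarrow$ LECA should follow rather directly once an average synchronization bound is coupled with the Lipschitz control on a single iterate. The converse LECA $\Rightarrow$ SA is the delicate direction: the local contracting windows must be globalized across $M$, and I would approach this by exploiting compactness together with the supermartingale structure inherited from NEA, namely that $n\mapsto Z_{n,d}^{x,y}$ is a nonnegative supermartingale which therefore converges almost surely, and then arguing that orbits visit the contracting regions often enough so that the limit vanishes in expectation.
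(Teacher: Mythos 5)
There is a genuine gap at the very start of your argument: you read ESCA as providing a single pair $n\in\bN$, $\lambda\in(0,1)$ with $\bE(Z_{n,d}^{x,y})\le\lambda\,d(x,y)$ \emph{uniformly for all} $x,y\in M$. That is not what ESCA says. ESCA (see \eqref{iip}) only gives, for each point $x$, some exponent $\ell=\ell(x)$ and an open neighborhood $V_{(x,x)}$ of the diagonal point $(x,x)$ on which the ratio $\bE(Z_{\ell,d}^{z,w})/d(z,w)$ is bounded below $1$; both $\ell$ and the neighborhood vary with $x$, and nothing is asserted for pairs $(x,y)$ away from the diagonal. The uniform statement you start from is precisely the $n$-ECA property, and producing it is the actual content of the theorem: one must combine LECA \eqref{ip} at each off-diagonal pair $(x,y)$ (upgraded, via continuity of $(z,w)\mapsto\bE(Z_{\ell,d}^{z,w})/d(z,w)$ off the diagonal, to an open neighborhood $V_{(x,y)}$), ESCA at diagonal pairs, compactness of $M\times M$ to extract a finite subcover, and then the NEA monotonicity $\bE(Z_{m,d}^{x,y})\le\bE(Z_{\ell,d}^{x,y})$ for $m\ge\ell$ (Lemma \ref{lemaABC}) to replace the finitely many exponents $\ell_i$ by their maximum $k$ and the rates by their maximum $\lambda$. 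Without this step your telescoping computation has no hypothesis to feed on. The computation itself — that $k$-ECA implies $(\cF,\bp,d_{k,\lambda})$ is CA with rate $\lambda^{1/k}$, together with strong equivalence of $d_{k,\lambda}$ and $d$ under NEA — is correct and coincides with Proposition \ref{prolem1:1} and Lemma \ref{lemrem:strequ} in the paper.

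The second assertion, LECA $\Leftrightarrow$ SA, is left in your write-up as a plan rather than a proof. The direction SA $\Rightarrow$ LECA is indeed immediate. For LECA $\Rightarrow$ SA your observation that $n\mapsto Z_{n,d}^{x,y}$ is a nonnegative supermartingale under NEA is sound and would give almost sure convergence, but the crux — ruling out a positive limit, i.e.\ showing that the random pair cannot remain at distance bounded away from zero with positive probability while only local, pair-dependent contraction times are available — is exactly what requires work. The paper's Lemma \ref{lem:2} does this by contradiction: assuming $\lim_n\bE(Z_{n,d}^{x,y})=\delta>0$, it covers the compact set $B_\delta=\{d\ge\delta/2\}$ by finitely many open sets on which a uniform time $N$ and rate $\lambda$ contract on average, and then iterates a splitting of $\bE(Z_{kN,d}^{x,y})$ according to whether the orbit pair lies in $B_\delta$, forcing the limit below $\delta$. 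If you want to pursue the martingale route you would still need an argument of this uniformization type; as it stands this half of the theorem is not established in your proposal.
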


The second main result concerns the particular case of an IFS of $C^1$ diffeomorphisms of the circle $\bS^1$ equipped with the usual metric $d(x,y)\eqdef\min\{\lvert x-y\rvert,1-\lvert x-y\rvert\}$. 

\begin{theorem}\label{teo3.1}
Assume that $\mathcal{F}$ is an IFS of $C^1$-diffeomorphisms on $\bS^1$.
Assume that $(\cF,d)$ is proximal and there does not exist a probability measure which is invariant by every map in $\mathcal{F}$. Then for every non-degenerate probability vector $\bp$ there exist $\alpha\in(0,1]$, $\lambda\in(0,1)$, and $n\in\mathbb{N}$ such  that $(\cF,\bp,D)$, with $D\eqdef(d^{\alpha})_{n,\lambda}$, is contracting on average. Moreover, $d\le D\le Cd^\alpha$ for some $C>0$, and hence $d$ and $D$ are strongly equivalent.
\end{theorem}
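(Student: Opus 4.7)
The plan is to reduce Theorem \ref{teo3.1} to Theorem \ref{pro:1}, applied not to the original $d$ but to the H\"older-exponentiated metric $d^\alpha$ for a suitably chosen $\alpha\in(0,1]$. Concretely, I would aim to show that under proximality and absence of a common invariant measure there exists $\alpha$ for which $(\cF,\bp,d^\alpha)$ satisfies the three hypotheses of Theorem \ref{pro:1}, namely NEA, LECA and ESCA. Theorem \ref{pro:1} then directly yields $\lambda\in(0,1)$ and $n\in\bN$ with $D=(d^\alpha)_{n,\lambda}$ contracting on average.

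The first and technically most delicate step is to produce $\alpha>0$ for which $(\cF,\bp,d^\alpha)$ is non-expansive on average. For $C^1$-diffeomorphisms $f_i$, a mean-value estimate gives $d(f_i(x),f_i(y))^\alpha\sim|f_i'(\cdot)|^\alpha d(x,y)^\alpha$ near the diagonal, so the infinitesimal NEA condition reduces to a pointwise bound of the form $\sum_i p_i |f_i'(x)|^\alpha\le 1$. At $\alpha=0$ the left-hand side is identically $1$; its derivative at $0$ equals $\sum_i p_i\log|f_i'(x)|$, and under our hypotheses a Furstenberg/Ledrappier-style argument shows that for every stationary measure $\nu$ the averaged log derivative $\sum_i p_i\int\log|f_i'|\,d\nu$ is strictly negative (otherwise $\nu$ would be simultaneously invariant by every $f_i$, contradicting our assumption). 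A compactness-and-covering argument on $\bS^1$ then promotes this infinitesimal estimate into a global NEA inequality for sufficiently small $\alpha>0$.

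The second step uses proximality: it immediately translates to synchronization on average (SA) for $(\cF,\bp,d^\alpha)$, since $d^\alpha$ and $d$ share the same notion of convergence to zero. The SA$\Leftrightarrow$LECA equivalence from Theorem \ref{pro:1} then gives LECA. For ESCA, the strict negative-exponent estimate of Step 1 combined with compactness of $\bS^1$ and the NEA bound implies that iterating sufficiently many times drives the expected $d^\alpha$-contraction factor uniformly below $1$ on all of $\bS^1\times\bS^1$.

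Finally, Theorem \ref{pro:1} applied to the metric $d^\alpha$ yields $\lambda,n$ with $D=(d^\alpha)_{n,\lambda}$ contracting on average. Strong equivalence is then immediate. Since on $\bS^1$ all distances satisfy $d\le 1/2\le 1$ and $\alpha\le 1$, one has $d\le d^\alpha\le D$ by the very definition of $(\cdot)_{n,\lambda}$. Conversely, $\bE(Z_{k,d^\alpha}^{x,y})\le(\max_i\Lip(f_i))^{k\alpha} d^\alpha(x,y)$, so summing the $n$ terms in the definition of $D$ gives $D\le Cd^\alpha$ with $C>0$ depending only on $\alpha,\lambda,n$ and the Lipschitz constants. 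The main obstacle throughout is Step 1: turning the purely topological/measure-theoretic inputs (proximality, no common invariant measure) into a quantitative H\"older exponent producing global NEA, since the infinitesimal estimate must be bootstrapped to the whole circle without expansion leaking in near critical regions where the derivatives are large.
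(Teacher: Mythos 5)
There is a genuine gap, and it is located exactly where you anticipated trouble: Step 1 cannot work. The hypotheses of Theorem \ref{teo3.1} do \emph{not} imply that $(\cF,\bp,d^\alpha)$ is NEA for any $\alpha\in(0,1]$, so the plan of verifying the three hypotheses of Theorem \ref{pro:1} for $d^\alpha$ is doomed from the start. The paper's own example in Section \ref{sec:exe1} (where $f_0$ has a repelling fixed point and $f_1$ is an irrational rotation) satisfies all hypotheses of Theorem \ref{teo3.1}, yet at the repelling fixed point $x_0$ of $f_0$ one has, for $x,y$ close to $x_0$, $\bE(Z_{1,d^\alpha}^{x,y})\approx\bigl(p\,\lvert f_0'(x_0)\rvert^{\alpha}+(1-p)\bigr)d^\alpha(x,y)>d^\alpha(x,y)$ for every $\alpha>0$, since $\lvert f_0'(x_0)\rvert>1$ and $f_1$ is an isometry. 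Your infinitesimal argument conflates two different objects: the negativity of $\sum_i p_i\int\log\lvert f_i'\rvert\,d\nu$ for stationary $\nu$ (an integrated Lyapunov-exponent statement, which is true here) with the pointwise bound $\sum_i p_i\lvert f_i'(x)\rvert^\alpha\le1$ for \emph{every} $x$ (which is what one-step NEA requires and which fails near repelling fixed points of individual maps). No compactness-and-covering argument can bridge this, because the contraction under the theorem's hypotheses is only visible after many iterates, not in one step. A secondary issue: ``proximality immediately translates to SA'' is false in general (the paper only proves SA $\Rightarrow$ proximal); in this circle setting the upgrade requires Malicet's results, which is precisely what Lemma \ref{lemcontxy} does.

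The paper's route avoids NEA for $d^\alpha$ altogether. It uses Proposition \ref{P1} (local a.s.\ exponential contraction with uniform rate $\lambda$, from Malicet) together with proximality and the $0$--$1$ law of Proposition \ref{P2} to obtain fiberwise exponential synchronization for \emph{every} pair $x,y$ and a.e.\ $\xi$ (Lemma \ref{lemcontxy}), and a derivative bound $\max_{z\in J_x(\xi)}\lvert(f_\xi^n)'(z)\rvert\le Ct^n$ on random neighborhoods (Lemma \ref{nderivative}). Then, in Proposition \ref{prolemmaexisteee}, it considers $A_n\eqdef\sup_{x\ne y}\int\log\bigl(Z_{n,d}^{x,y}/d(x,y)\bigr)d\mu$, which is subadditive, and shows via a compactness and Fatou argument (treating the cases of distinct and coinciding limit points separately) that $\lim A_n/n\le\log\lambda<0$; the elementary inequality $e^t\le1+t+\tfrac{t^2}{2}e^{\lvert t\rvert}$ then converts $A_n<\tfrac n2\log\lambda$ into $k$-ECA for $(\cF,\bp,d^\alpha)$ with $\alpha$ small and $k=n$ large. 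Finally Proposition \ref{prolem1:1} (the ECA-to-CA metric change, the same device used inside Theorem \ref{pro:1}) and Lemma \ref{lemrem:strequ} give the conclusion and the strong equivalence $d\le D\le Cd^\alpha$. To repair your argument you would have to replace Step 1 by such a multi-step, integrated contraction estimate; only your final strong-equivalence computation survives as stated.
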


In Section \ref{sec:6} we illustrate and discuss our results (compare Figure \ref{fig.3}) in two classes of homeomorphisms on $\bS^1$. We summarize their main properties.
\begin{figure}[h] 
 \begin{overpic}[scale=.6]{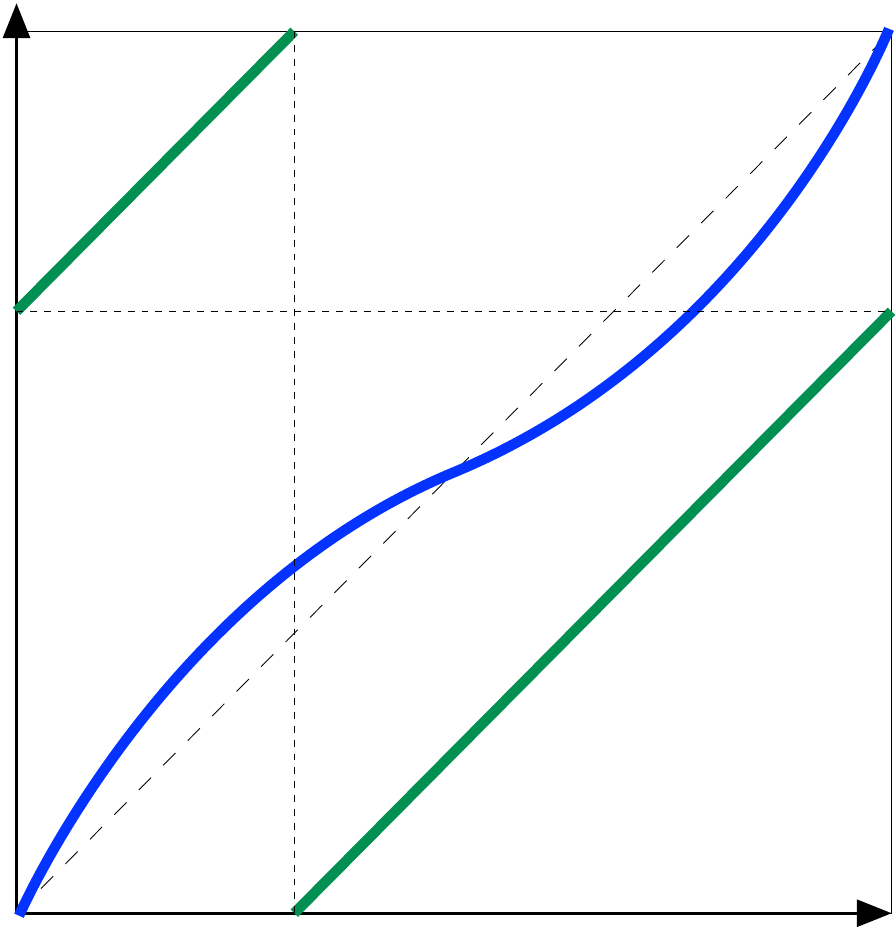}
	\put(97,67){$\textcolor{forestgreen}{f_0}$}
	\put(97,93){$\textcolor{blue}{f_1}$}
 \end{overpic}
 \hspace{0.7cm}
  \begin{overpic}[scale=.6]{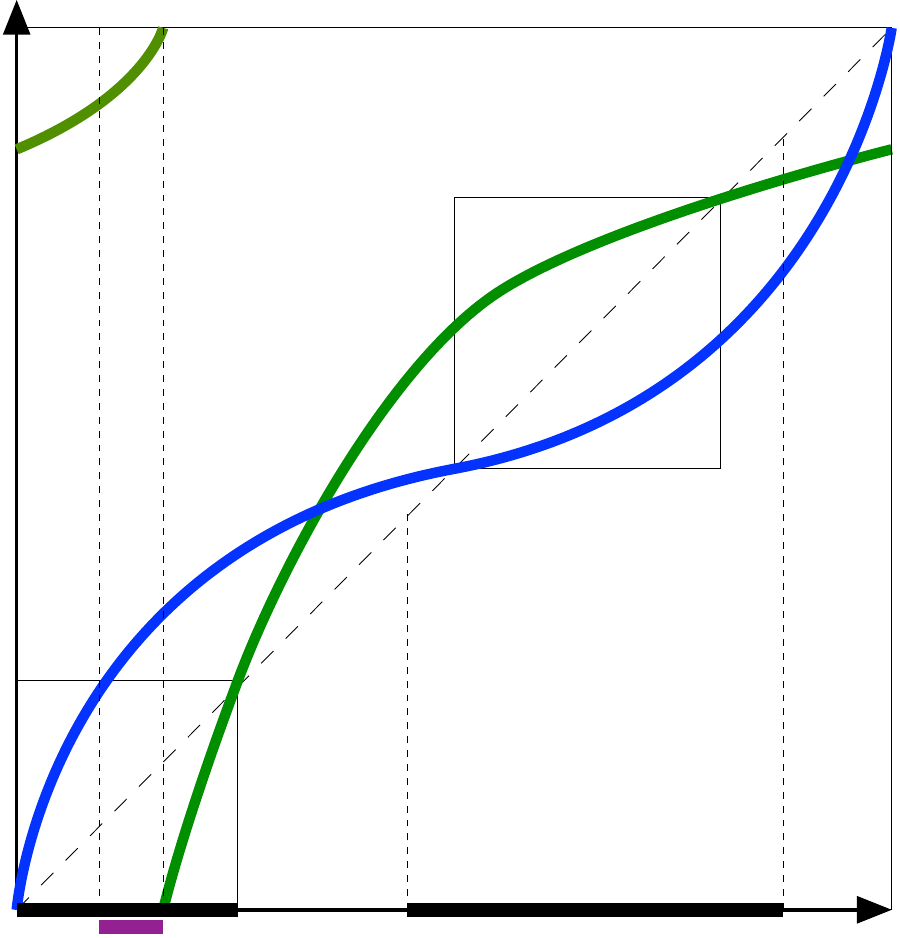}
	\put(61,-4){$I$}
	\put(1,-4){$J$}
	\put(11,-4){$\textcolor{magenta}{J^\ast}$}
	\put(97,83){$\textcolor{forestgreen}{f_0}$}
	\put(97,93){$\textcolor{blue}{f_1}$}
 \end{overpic}
  \caption{Examples studied in Section \ref{sec:exe1} (left) and Section \ref{sec:exe2} (right)}
 \label{fig.3}
\end{figure}

\begin{example}[$(\cF,\bp,d)$ in Section \ref{sec:exe1}, see Figure \ref{fig.3} (left)]
This example of an IFS of $C^1$-diffeomorphisms which is proximal (and hence SA and LECA), but fails to be NEA.  The choice of metric $\rho$ in \cite{KatSten} forces $(\cF,\bp,\rho)$ to satisfy NEA, SA, and LECA, but $(\cF,\bp,\rho)$ fails to be ESCA. As $(\cF,\bp,\rho)$ verifies the hypotheses of Theorem \ref{teo3.1}, there  exist $\alpha\in(0,1]$, $\lambda\in(0,1)$ and $n\in\mathbb{N}$ such  that for 
$
	D\eqdef(d^{\alpha})_{n,\lambda}
$	 
the triple $(\cF,\bp,D)$ is CA. In particular, $(\cF,\bp,D)$ satisfies  NEA, SA, LECA, and ESCA. However,  for all $\beta\in(0,1]$ the metric $D^{\beta}$ fails to be strongly equivalent to $d$. 
\end{example}

\begin{example}[$(\cF,\bp,d)$ in Section \ref{sec:exe2}, see Figure \ref{fig.3} (right)]
In this example the approach in \cite{KatSten} does not apply. This example fails to be NEA and $\varepsilon$-LCA, but it  is proximal, S, SA, and LECA. For this example, it is shown that for appropriate $\alpha\in(0,1]$ the metric
\[
	\hat D(x,y)
	\eqdef \bE\Big( \sup_{n\geq 0}Z_{n,d^{\alpha}}^{x,y}\Big),
\]
is strongly equivalent to $d^{\alpha}$ and $(\cF,\bp,\hat D)$ satisfies NEA, SA (and hence LECA), and ESCA. Hence, by Theorem \ref{pro:1}, there is a metric $ D$ which is strongly equivalent to $\hat D$ (and hence to $d^{\alpha}$) such that $(\cF,\bp, D)$ is CA. In particular, $(\cF,\bp, D)$ satisfies  NEA, LECA, and ESCA. Moreover, if $f_0$ and $f_1$ are $C^1$-diffeomorphisms  which have no common fixed points, then Theorem \ref{teo3.1} applies.
\end{example}

\begin{figure}[h] 
 \begin{overpic}[scale=.6]{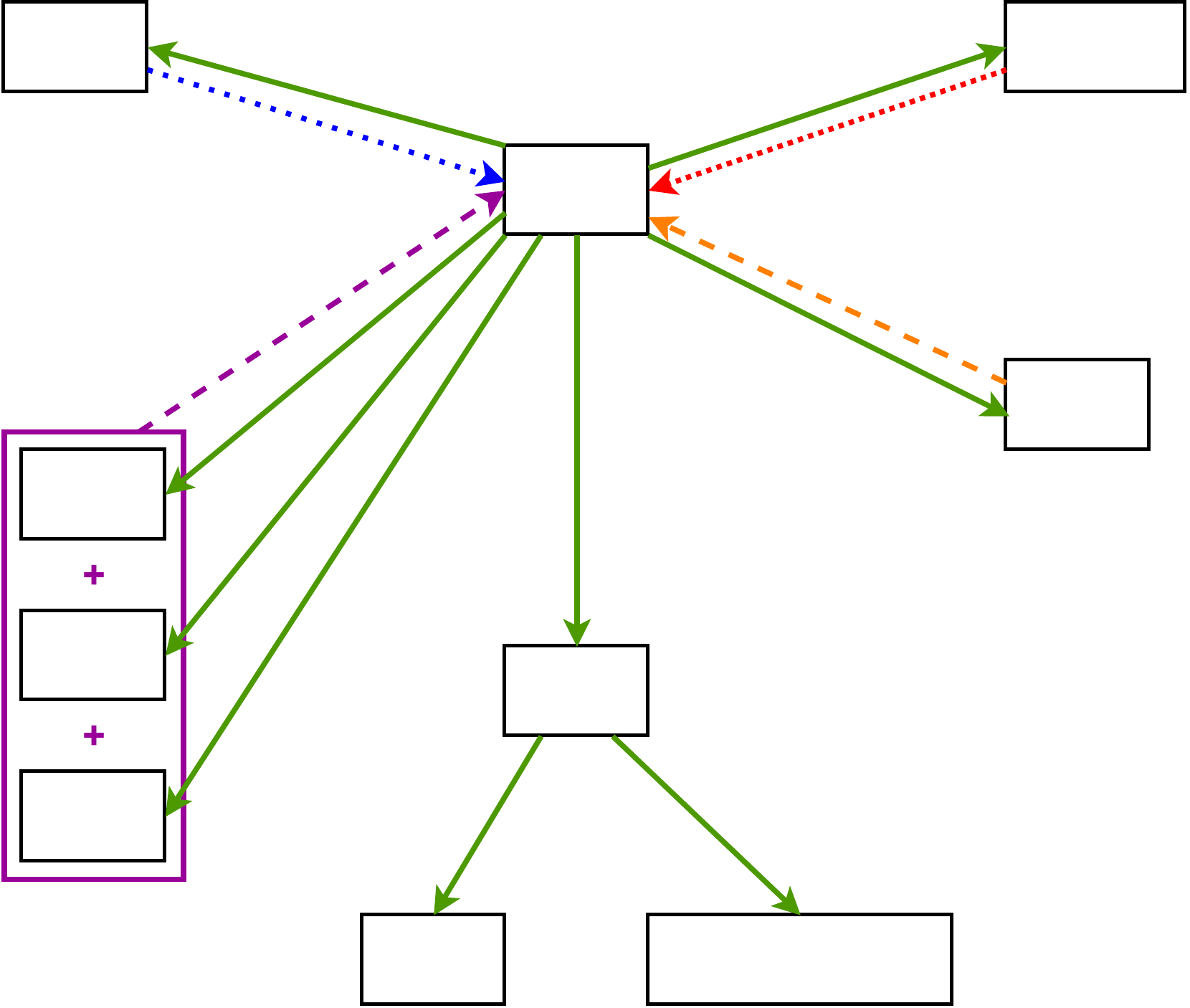}
	\put(56,3){{$\varepsilon$-local $\log$-CA}}
	\put(31,3){{LCWS}}
	\put(46,67.2){{CA}}
 	\put(1.5,80){{$k$-ECA}}
 	\put(4,42){{NEA}}
 	\put(3,28){{LECA}}
 	\put(3,14.8){{ESCA}}
 	\put(87,79){{$\log$-CA}}
 	\put(86,49){{SA${}_{\rm exp}$}}
	\put(43.5,25){{$\varepsilon$-LCA}}
	\put(-10,72){\scriptsize{\textcolor{blue}{for some equivalent }}}
	\put(-10,69){\scriptsize{\textcolor{blue}{metric (Proposition \ref{prolem1:1})}}}
	\put(-10,61){\scriptsize{\textcolor{violet}{for some strongly-equivalent metric}}}
	\put(-10,58){\scriptsize{\textcolor{violet}{assuming compactness}}}
	\put(-10,55){\scriptsize{\textcolor{violet}{(Theorem \ref{pro:1})}}}
	\put(67.5,70.5){\scriptsize{\textcolor{red}{for some $d^{\alpha}$ and $\alpha\in(0,1]$}}}
	\put(67.5,67.5){\scriptsize{\textcolor{red}{(Proposition \ref{prologca2})}}}
	\put(67.5,64){\scriptsize{\textcolor{orange}{for some equivalent metric}}}
	\put(67.5,61){\scriptsize{\textcolor{orange}{(Proposition \ref{pro:3})}}}
	\put(62.5,49){\scriptsize{\textcolor{forestgreen}{for bounded metric}}}
 \end{overpic}
  \caption{Implications between: contracting on average (CA), eventually contracting on average (ECA), locally eventually contractive on average (LECA), non-expansive on average (NEA), eventually strongly contracting on average (ESCA), $\log$-contractive on average ($\log$-CA), $\varepsilon$-local contractive on average ($\varepsilon$-LCA), locally contractive in the weak sense (LCWS), and $\varepsilon$-local $\log$-CA}
 \label{fig.1}
\end{figure}

This paper is organized as follows. In Section \ref{sec:impact} we briefly discuss what impact a metric change has on stochastic properties for a contracting on average IFS.
In Section \ref{sec:syncont}, we define all concepts used in this paper and explore some of their properties and prove the implications depicted  in Figure \ref{fig.1}.
In the context of NEA, Figure \ref{figure.4}  depicts implications that will be explored in the course of this paper. Figure \ref{fig.1} puts the property CA into relation with other ones discussed here. In Section \ref{sec:CondCA}, we present some sufficient conditions that guarantee the existence of some metric for which the system is CA and, in particular, prove Theorem \ref{pro:1}. In Section \ref{sec:5}, we study IFSs of $\bS^1$ and prove Theorem \ref{teo3.1}. The above summarized examples are studied in Section \ref{sec:6}.

%---------------------------------------------------------------------------
\section{Stochastic properties after metric change}\label{sec:impact}
%---------------------------------------------------------------------------

In this section, we briefly describe stochastic properties for contracting on average IFSs and discuss the impact of a metric change. Recall that a Borel probability measure $\nu$ on $M$ is \emph{stationary} for the IFS $(\cF,\bp)$ if
\[
	\cF_\ast\nu=\nu,
	\quad\text{ where }\quad
	\cF_\ast\nu
	\eqdef \sum_{i=0}^{N-1}p_i(f_i)_\ast\nu.
\]
This term is justified by the fact that if $(i_n)_{n\in\bN}$ is a stochastic sequence with values in $\{0,\ldots,N-1\}$ which is independently and equally distributed according to the probability vector $\bp$ and  $x\in M$ is a $\nu$-distributed random variable, independent of $(i_n)_{n\in\bN}$, then $(W_n^x)_{n\in\bN}$, defined by
\[	W_n^x\eqdef (f_{\xi_n}\circ\cdots\circ f_{\xi_1})(x),
	\quad
	W_0^x\eqdef x,
\] is a stationary stochastic sequence. The independent random applications of maps $f_i$ from the IFS $\cF$, each chosen with probability $p_i$, gives rise to the Markov chain $(W_n^x)_{n\in\bN}$.
 
By our hypotheses, this Markov chain has the \emph{weak Feller property}%
\footnote{The \emph{transfer operator} $T$ associated to the pair $(\cF,\bp)$ acts on the space of bounded measurable functions $\varphi\colon M\to\bR$ by 
\[
	T\varphi(x)
	\eqdef \sum_{i=0}^{N-1}p_ih(f_i(x)).
\]
It has the \emph{weak Feller property} if it maps the space of real valued continuous functions on $M$ to itself.}
and hence there exist at least one stationary measure. It is relevant to know under what conditions such stationary measure is unique, under what hypotheses it is true that
\[
	P^n (x ,B)
	\eqdef \bP\{W_n^x\in B\}
	\to \nu(B)
\]
as $n\to\infty$, and what is the speed of such a convergence.

For a compact metric space $(M, D)$ and $(\cF,\bp,D)$ which is CA,  by \cite[Theorem 2.1]{BDEG88} there exists a unique stationary Borel probability measure $\nu$. By \cite[Corollary 2.1]{JarnerTweedie},  for any initial conditions $x$, the distribution of $W_n^x$ converges exponentially fast to $\nu$ in the Prokhorov metric, that is, for every measurable set $B$ and $n\in\mathbb{N}$  it holds
\[
	P^n(x,B)
	\leq \nu(B_n)+A_xr^n,\quad
	\nu(B)
	\leq P^n(x,B_n)+A_xr^n,
\]	
where $B_n\eqdef\{y\in M\colon D(y,B)<A_xr^n\}$. Here the rate of convergence $r\in(0,1)$ does neither depend on $n$ nor on $x$. Furthermore, the constant $A_x$ does not depend on $n$ and is uniformly bounded.
Note that all previous facts do not depend on the metric on $M$ (within the class of  metrics which generate the same topology). Only the explicit convergence in the Prokhorov metric was given in terms of $D$. Observe that if $D$ and another metric $d$ are such that $C^{-1}d^\alpha\le D\le Cd^\alpha$ for some constants $C^{-1},\alpha\in(0,1]$ (that is, $D$ and $d^\alpha$ are strongly equivalent), then we still obtain exponential contraction taking 
\[
	B_n
	\eqdef \{y\in M\colon d(y,B)<C_xr^{n/\alpha}\}, 
	\quad\text{where}\quad 
	C_x
	\eqdef(CA_x)^{1/\alpha}.
\]	

Assuming CA, in  \cite[page 484]{Elton87} an Ergodic Theorem was shown, whose  assertion is unaltered under any metric change. Assuming the slightly more general property $k$-ECA for any $k\in\bN$, in \cite[Theorem 5.1]{PeignM93} a strong law of large numbers and a central limit theorem are stated; again these assertions remain the same under metric change.

%---------------------------------------------------------------------------
\section{Synchronization and contraction (on average)}\label{sec:syncont}
%---------------------------------------------------------------------------

In this section we discuss several types of  synchronization-like and contraction conditions and their relations between each other.
Unless stated otherwise, we always assume that $(M,d)$ is a general metric space, $\mathcal{F}=\{f_0,\ldots,f_{N-1}\}$ an IFS of continuous maps, and $\bp$ a non-degenerate probability vector. Let $\mu$ the Bernoulli measure on $\Sigma^+_N$ determined by $\bp$.
When $X$ is a random variable on $(\Sigma^+_N,\mu)$, we write 
\[\bE (X)\eqdef \int_{\Sigma^+_N} Xd\mu.\]

%---------------------------------------------------------------------------
\subsection{Synchronization}
%---------------------------------------------------------------------------

The study of synchronization effects goes back to, at least, the 17th century, when
Huygens \cite{HC} observed the synchronization of linked pendulums. In the theory of dynamical systems, \emph{synchronization} usually refers to the phenomenon that for any two initially fixed distinct points their randomly chosen trajectories converge to each other. Let us now recall related concepts.

Let us briefly recall some properties. One says that $(\cF, \bp,d)$ is \emph{synchronizing (S)} if random orbits of different initial points converge to each other with probability 1, that is, for every $x,y\in M$ and almost every $\xi\in\Sigma^+_N$ it holds
\begin{align}\label{sinc}
    \lim_{n\to\infty}Z_{n,d}^{x,y}(\xi)=0.
\end{align}
The triple $(\cF, \bp,d)$ is \emph{exponentially synchronizing (S${}_{\rm exp}$)} if the convergence in \eqref{sinc} is exponentially fast, that is, if for every $x,y\in M$ and almost every $\xi\in\Sigma^+_N$ there exist $\lambda\in(0,1)$ and $C>0$ such that 
\[
	Z_{n,d}^{x,y}(\xi)\leq C \lambda^n.
\]	
The pair $(\cF,d)$ is \emph{proximal} if for every $x, y \in M$, there exist $\xi\in\Sigma^+_N$ and an increasing sequence $(n_k)_{k\in\mathbb{N}}$ such that
\[
	\lim_{k\to\infty}Z_{n_k,d}^{x,y}(\xi)=0.
\]	
The triple $(\cF, \bp,d)$ is called \emph{synchronizing on average (SA)}%
\footnote{Note that  the definition in \cite{MEMI,GAKV} differs from the one given here: $(\cF,\bp,d)$ is \emph{synchronizing on average} if for every $x,y\in M$, for almost every $\xi\in\Sigma_N^{+}$ it holds
\[\lim_{n\to\infty}\frac{1}{n}\sum_{j=0}^{n-1}Z_{j,d}^{x,y}(\xi)=0.\]}
if  for every $x,y\in M$ it holds
\[
    \lim_{n\to\infty}\bE \big(Z_{n,d}^{x,y}\big)=0.
\]
The triple $(\cF, \bp,d)$ is called \emph{exponentially synchronizing on average} (SA${}_{\rm exp}$) if there exist  $\lambda\in(0,1)$ and $C>0$ such that for every $x,y\in M$ it holds
\[ \bE \big(Z_{n,d}^{x,y}\big)\leq C \lambda^n.
\]
The following general relations hold between the above defined properties. The first one is an immediate consequence of the dominated convergence theorem. To prove the second one, we also use Jensen's inequality.

\begin{lemma}\label{sec3lem1}
	Assuming that $(M,d)$ is bounded, S implies SA.
\end{lemma}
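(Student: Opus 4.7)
The plan is to apply the dominated convergence theorem to the sequence of random variables $Z_{n,d}^{x,y}\colon \Sigma_N^+ \to [0,\infty)$ on the probability space $(\Sigma_N^+, \mu)$.

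First, I would fix arbitrary points $x, y \in M$. By the hypothesis that $(\cF, \bp, d)$ is synchronizing, for $\mu$-almost every $\xi \in \Sigma_N^+$ we have $Z_{n,d}^{x,y}(\xi) \to 0$ as $n \to \infty$; that is, the sequence of random variables converges pointwise (a.e.) to the zero function.

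Next, I would use the boundedness of $(M,d)$ to produce a uniform integrable dominating function. Let $D \eqdef \diam(M) = \sup_{u,v \in M} d(u,v) < \infty$. Then for every $n \in \bN$ and every $\xi \in \Sigma_N^+$, since $f_\xi^n(x), f_\xi^n(y) \in M$, we have
\[
    Z_{n,d}^{x,y}(\xi) = d(f_\xi^n(x), f_\xi^n(y)) \le D.
\]
The constant function $D$ is integrable on the probability space $(\Sigma_N^+, \mu)$.

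By the dominated convergence theorem applied to $(Z_{n,d}^{x,y})_{n \in \bN}$ with dominating function $D$, we conclude
\[
    \lim_{n\to\infty} \bE\big(Z_{n,d}^{x,y}\big) = \bE\Big(\lim_{n\to\infty} Z_{n,d}^{x,y}\Big) = 0,
\]
which is precisely the SA property. Since $x, y$ were arbitrary, the proof is complete.

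There is no real obstacle here; the argument is a textbook application of dominated convergence, with boundedness of $(M,d)$ serving solely to furnish the integrable dominating constant $\diam(M)$.
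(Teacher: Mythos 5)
Your proof is correct and matches the paper's argument exactly: the paper also obtains this lemma as an immediate consequence of the dominated convergence theorem, with boundedness of $(M,d)$ supplying the constant dominating function. Nothing to add.
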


\begin{lemma}\label{sec3lem2}
S and proximal are invariant under any change of equivalent metrics.
SA${}_{\rm exp}$ is invariant under any change of metrics $d$ and $D$ such that $d^{\alpha}$,  for some $\alpha \in(0,1]$, is strongly equivalent to $D$.
\end{lemma}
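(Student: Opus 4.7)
The plan is to handle the topological claims (S and proximal) and the quantitative claim (SA${}_{\rm exp}$) separately. Throughout I take $M$ to be compact, which is the paper's standing hypothesis in the main results and is essential here to make the first claim meaningful under a change of merely topologically equivalent metrics (otherwise it is easy to build a counterexample on, e.g., $(0,\infty)$).

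For S and proximal, the key point is that on a compact metric space any two topologically equivalent metrics $d$ and $D$ are in fact uniformly equivalent: the identity map $(M,d) \to (M,D)$ is a continuous bijection between compact metric spaces and hence uniformly continuous in both directions. Consequently $d(x_n, y_n) \to 0$ if and only if $D(x_n, y_n) \to 0$ for any sequences in $M$. Applying this with $x_n = f^n_\xi(x)$ and $y_n = f^n_\xi(y)$, the defining convergence in S (and, along a subsequence, in the definition of proximal) transfers verbatim from $d$ to $D$.

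For the SA${}_{\rm exp}$ claim, fix $\alpha \in (0,1]$ and constants with $C_1 d^\alpha \le D \le C_2 d^\alpha$. The direction from $d$ to $D$ is the easy half: bounding $Z_{n,D}^{x,y} \le C_2 (Z_{n,d}^{x,y})^\alpha$ pointwise and applying Jensen's inequality to the concave function $t \mapsto t^\alpha$, exponential decay of $\bE(Z_{n,d}^{x,y})$ at rate $\lambda \in (0,1)$ yields exponential decay of $\bE(Z_{n,D}^{x,y})$ at rate $\lambda^\alpha \in (0,1)$.

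The converse direction is the step I expect to be the delicate one. From $d \le C_1^{-1/\alpha} D^{1/\alpha}$ one obtains $\bE(Z_{n,d}^{x,y}) \le C_1^{-1/\alpha}\, \bE((Z_{n,D}^{x,y})^{1/\alpha})$, but now the exponent $1/\alpha \ge 1$ makes Jensen's inequality point the wrong way. Here compactness re-enters: with $K \eqdef \diam_D(M) < \infty$, the pointwise inequality $t^{1/\alpha} \le K^{1/\alpha - 1} t$ valid on $[0, K]$ collapses the $1/\alpha$-moment of $Z_{n,D}^{x,y}$ into a constant multiple of its first moment, so the exponential rate of $D$ is inherited by $d$.
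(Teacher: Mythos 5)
Your proof is correct, and for the part the paper actually sketches it is the same argument: the paper gives no written proof of Lemma \ref{sec3lem2} beyond the remark that Jensen's inequality is used, and your forward estimate $\bE\big(Z_{n,D}^{x,y}\big)\le C_2\,\bE\big((Z_{n,d}^{x,y})^{\alpha}\big)\le C_2\big(\bE (Z_{n,d}^{x,y})\big)^{\alpha}$ is exactly that step, giving rate $\lambda^{\alpha}$. What you do differently is to make explicit two hypotheses the paper leaves implicit. First, for S and proximality you route through Heine--Cantor (on a compact space topologically equivalent metrics are uniformly equivalent) instead of transferring $d(x_n,y_n)\to 0$ to $D(x_n,y_n)\to 0$ directly; this is genuinely needed, since the lemma is stated under the section's standing hypothesis of a general metric space, where the first claim fails as stated: on $M=[0,\infty)$ with $D(x,y)=\lvert x^2-y^2\rvert$ (topologically but not uniformly equivalent to $d$) and $f_i(x)\eqdef\sqrt{x^2+i+1}$, $i=0,1$, one has $Z_{n,d}^{x,y}(\xi)\to 0$ for every $\xi$ while $Z_{n,D}^{x,y}(\xi)=\lvert x^2-y^2\rvert$ is constant, so the IFS is S for $d$ but not even proximal for $D$. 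Since the lemma is only invoked on $\bS^1$ (Section \ref{sec:exe2}), nothing downstream is affected, but your compactness (or uniform-equivalence) caveat is a genuine correction rather than a weakening. Second, you also prove the reverse implication for SA${}_{\rm exp}$, where Jensen points the wrong way; your bound $t^{1/\alpha}\le K^{1/\alpha-1}t$ on $[0,K]$ with $K\eqdef\diam_D(M)$ is fine and uses only boundedness of the metric, not compactness per se. One small simplification you could note: in the situation of the second claim (and of the lemma's actual application), $D$ is strongly equivalent to $d^{\alpha}$, hence $d$ and $D$ are automatically uniformly equivalent, so for such pairs the transfer of S and proximality needs no compactness at all; and the discrepancy between the rates $\lambda^{\alpha}$ and $\lambda$ in your two directions is immaterial, since SA${}_{\rm exp}$ only asks for some exponential rate.
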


\begin{lemma}\label{SAimpliesprox}
	The following implications hold
\[
	\text{S}{}_{\rm exp}
	\quad\Rightarrow\quad \text{S}
	\quad\Rightarrow \quad\text{SA}
	\quad\Rightarrow \quad\text{proximal}.
\]	
\end{lemma}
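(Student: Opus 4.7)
The plan is to establish the three implications in the chain separately; all three are essentially routine once set up correctly.

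The first implication, $\text{S}_{\rm exp} \Rightarrow \text{S}$, is immediate from the definitions: if for every $x,y \in M$ and almost every $\xi$ there exist $C$ and $\lambda$ (depending on $x,y,\xi$) with $Z_{n,d}^{x,y}(\xi) \le C\lambda^n$, then $Z_{n,d}^{x,y}(\xi) \to 0$ for almost every $\xi$, which is exactly the defining property of S. The second implication, $\text{S} \Rightarrow \text{SA}$, is the content of Lemma~\ref{sec3lem1} under the implicit assumption that $(M,d)$ is bounded: the random variables $Z_{n,d}^{x,y}$ are uniformly bounded by $\diam(M,d)$, so the dominated convergence theorem converts the almost sure convergence guaranteed by S into the $L^1$ convergence demanded by SA.

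For the third implication, $\text{SA} \Rightarrow \text{proximal}$, I would argue by contradiction using Fatou's lemma. Suppose $(\cF,d)$ is not proximal. Then there exist $x,y \in M$ such that for \emph{every} $\xi \in \Sigma^+_N$ no subsequence of $(Z_{n,d}^{x,y}(\xi))_n$ tends to zero; equivalently, the measurable function $\xi \mapsto \liminf_{n\to\infty} Z_{n,d}^{x,y}(\xi)$ is strictly positive at every point of $\Sigma^+_N$. Being a non-negative measurable function which never vanishes, its $\mu$-integral is strictly positive. Fatou's lemma then yields
\[
    0 < \bE\Big(\liminf_{n\to\infty} Z_{n,d}^{x,y}\Big) \le \liminf_{n\to\infty} \bE(Z_{n,d}^{x,y}),
\]
in direct contradiction with the SA hypothesis that $\bE(Z_{n,d}^{x,y}) \to 0$.

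No serious obstacle is expected. The only mild subtlety is the $\text{S} \Rightarrow \text{SA}$ step, which silently relies on boundedness of $d$ (as already noted in Lemma~\ref{sec3lem1}) to apply dominated convergence; the Fatou argument for $\text{SA} \Rightarrow \text{proximal}$ is essentially one line once the negation of proximality is phrased in terms of the $\liminf$.
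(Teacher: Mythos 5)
Your proof is correct, but for the only nontrivial implication, $\text{SA}\Rightarrow\text{proximal}$, you take a genuinely different route from the paper. The paper fixes $x,y$, observes that SA means $Z_{n,d}^{x,y}\to 0$ in $L^1$, applies Chebyshev's (Markov's) inequality to get convergence in probability, and then cites the standard fact (Durrett, Theorem 2.3.2) that convergence in probability yields a subsequence $(n_k)$ along which $Z_{n_k,d}^{x,y}\to 0$ almost surely; proximality follows. You instead negate proximality as $\liminf_n Z_{n,d}^{x,y}(\xi)>0$ for every $\xi$ and apply Fatou's lemma, $0<\bE(\liminf_n Z_{n,d}^{x,y})\le \liminf_n \bE(Z_{n,d}^{x,y})=0$, a contradiction. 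Both arguments are sound (your reformulation of non-proximality via the $\liminf$ is correct, since $Z_{n,d}^{x,y}\ge 0$, and a nowhere-vanishing nonnegative measurable function has positive integral). Your Fatou argument is shorter and self-contained; run forwards it shows that for $\mu$-a.e.\ $\xi$ one has $\liminf_n Z_{n,d}^{x,y}(\xi)=0$, with the vanishing subsequence possibly depending on $\xi$, whereas the paper's route produces a single subsequence $(n_k)$, independent of $\xi$, along which $Z_{n_k,d}^{x,y}\to 0$ almost surely --- a marginally stronger statement at the cost of an external citation. Your remark that $\text{S}\Rightarrow\text{SA}$ silently uses boundedness of $d$ via dominated convergence is consistent with Lemma \ref{sec3lem1}, where this hypothesis is stated explicitly, even though the paper's proof of Lemma \ref{SAimpliesprox} dismisses the first two implications as immediate.
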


\begin{proof}
The first two implications are immediate.
Let us assume $(\cF, \bp,d)$ is SA. Fix $x,y\in M$. Then $Z_{n,d}^{x,y}$ converges to $0$ in $L^1$ as $n\to\infty$. Hence, applying Chebyshev’s inequality, for every $\varepsilon>0$ it holds
\[
	\mu(Z_{n,d}^{x,y}\geq \varepsilon)
	\leq \varepsilon^{-1}\bE (Z_{n,d}^{x,y})
	\to0
\]
as $n\to\infty$, that is, $Z_{n,d}^{x,y}$ converges to $0$ in probability. By \cite[Theorem 2.3.2]{Durrett}, there exists a sub-sequence $(n_k)_k$ such that $Z_{n_k,d}^{x,y}$ converges almost surely to 0 as $k\to \infty$. This implies proximality.
\end{proof}

\begin{remark}
If $\mathcal{F}=\{f_i\}_{i=0}^{N-1}$ is an IFS of homeomorphisms of the circle $\bS^1$ which do note have a common fixed point and $\bp$ a non-degenerate probability vector,  then by \cite[Theorem E]{Malicet} the concepts S${}_{\rm exp}$, S,  and proximal (and hence SA) are equivalent. 
\end{remark}

%---------------------------------------------------------------------------
\subsection{Global average contraction conditions}\label{sec:22}
%---------------------------------------------------------------------------

The IFS $\cF$ is \emph{contracting (C)} if every map in $\mathcal{F}$ is a contraction. Given $k\in\bN$, the IFS $\cF$ is \emph{$k$-eventually contracting ($k$-EC)}
 if $\cF^k$ is contracting, where $\cF^k\eqdef\{f_{\xi_1\ldots\xi_k}\colon (\xi_1,\ldots,\xi_k)\in\{0,\ldots,N-1\}^k\}$, and \emph{eventually contracting (EC)} if it is $k$-eventually contracting for some $k\in\bN$.

%---------------------------------------------------------------------------
\subsubsection{CA}
%---------------------------------------------------------------------------
The triple $(\cF, \bp,d)$ is called \textit{contractive
on average (CA)} if there is some \emph{contraction rate} $\lambda\in(0,1)$ such that 
\[    \bE (Z_{1,d}^{x,y})\leq \lambda d(x,y)
    \quad\text{for every $x,y\in M$}.
\]The following property is straightforward to check.

\begin{remark}
CA is a particular case of the first hypothesis of \cite[Theorem 2.1]{BDEG88}.
\end{remark}

\begin{lemma}\label{lemalambda}
If $(\cF,\bp,d)$ is CA with contraction rate $\lambda\in(0,1)$, then for every $n\in\mathbb{N}$ 
\[
	\bE (Z^{x,y}_{n,d})\leq \lambda^n d(x,y)
    \quad\text{for every $x,y\in M$}.
\]
\end{lemma}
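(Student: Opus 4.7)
The plan is to induct on $n$. The base case $n=1$ is exactly the defining inequality of CA, and the case $n=0$ is trivial. For the inductive step, the key observation is that the Bernoulli measure $\mu$ has i.i.d.\ coordinates, so we can peel off the last factor in the composition $f^n_\xi = f_{\xi_n}\circ f^{n-1}_\xi$ and apply CA to the two points $f^{n-1}_\xi(x)$ and $f^{n-1}_\xi(y)$ whose distance has already been controlled by the inductive hypothesis.

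More concretely, I would write $\Sigma_N^+$ as a product $\{0,\ldots,N-1\}^{\{1,\ldots,n-1\}}\times\{0,\ldots,N-1\}\times\{0,\ldots,N-1\}^{\{n+1,\ldots\}}$ and let $\mu = \mu_{<n}\otimes \bp\otimes\mu_{>n}$ under this factorization. Conditioning on the first $n-1$ coordinates $(\xi_1,\ldots,\xi_{n-1})$, the points $u\eqdef f^{n-1}_\xi(x)$ and $v\eqdef f^{n-1}_\xi(v)$ are deterministic, and the CA hypothesis applied to $u,v$ yields
\[
    \sum_{i=0}^{N-1} p_i\, d\bigl(f_i(u),f_i(v)\bigr) \le \lambda\, d(u,v).
\]
Integrating this inequality against $\mu_{<n}$ gives
\[
    \bE(Z^{x,y}_{n,d})
    = \int \sum_{i=0}^{N-1} p_i\, d\bigl(f_i(f^{n-1}_\xi(x)),f_i(f^{n-1}_\xi(y))\bigr)\,d\mu_{<n}
    \le \lambda\,\bE(Z^{x,y}_{n-1,d}).
\]
By the inductive hypothesis $\bE(Z^{x,y}_{n-1,d})\le \lambda^{n-1}d(x,y)$, which closes the induction.

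No real obstacle here; the only point to handle carefully is that $Z^{x,y}_{n,d}$ depends only on the first $n$ coordinates of $\xi$, so all expectations above are well-defined over $(\Sigma_N^+,\mu)$ and the Fubini step on the finite-dimensional marginal is immediate.
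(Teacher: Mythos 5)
Your induction is correct and is precisely the argument the paper leaves implicit for this lemma: conditioning on the first $n-1$ coordinates, peeling off the outermost map, and applying CA is the same computation the paper carries out (with NEA in place of CA) in the proof of Lemma \ref{lemaABC}, and one could equally close the induction via the identity $\sum_{i}p_i\psi_n(f_i(x),f_i(y))=\psi_{n+1}(x,y)$ from Lemma \ref{lemaABC1}. Only a typo to fix: $v\eqdef f^{n-1}_\xi(v)$ should read $v\eqdef f^{n-1}_\xi(y)$.
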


\begin{remark}
If $(M,d)$ is bounded, then it follows from Lemma \ref{lemalambda} that CA implies SA${}_{\rm exp}$.
\end{remark}

Given $k\in\bN$, the triple $(\cF, \bp,d)$ is called \emph{$k$-eventually contractive
on average ($k$-ECA)} if there exist some \emph{contraction rate} $\lambda\in(0,1)$ such that 
\begin{align}\label{eq01}
    \bE (Z_{k,d}^{x,y})\leq \lambda d(x,y)
        \quad\text{for every $x,y\in M$}.
\end{align}
The triple $(\cF,\bp,d)$ is \emph{eventually contracting on average (ECA)} if it is $k$-ECA for some $k\in\bN$.

The following lemma justifies that a, perhaps  obvious, first choice of a metric to establish CA is well defined.

\begin{lemma}\label{lemaABC1}
For every $n\in\bN$, the function $\psi_n\colon M\times M\to \bR$ defined by 
\[
	\psi_n(x,y) \eqdef\bE (Z_{n,d}^{x,y})
\]	 
is continuous and defines a pseudometric on $M$. For every $x,y\in M$, it holds
\[
	\sum_{i=0}^{N-1} p_i\psi_n(f_i(x),f_i(y))=\psi_{n+1}(x,y).
\]	
Moreover, if $\cF$ is an IFS of homeomorphisms then $\psi_n$ is a metric on $M$  which is equivalent to $d$. 
\end{lemma}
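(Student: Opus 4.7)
The basic idea is to work with the explicit finite-sum representation
\[
	\psi_n(x,y)
	=\sum_{w\in\{0,\ldots,N-1\}^n}p_w\,d\bigl(f_w(x),f_w(y)\bigr),
	\qquad
	p_w\eqdef p_{w_1}\cdots p_{w_n},
\]
which is available because $\mu$ assigns the weight $p_w$ to the cylinder $[w]$. Continuity of $\psi_n$ is then immediate since each summand is continuous in $(x,y)$, $f_w$ being a finite composition of continuous maps. The pseudometric axioms likewise descend termwise from the corresponding axioms of $d$: nonnegativity and vanishing on the diagonal are clear; symmetry uses $d(f_w(x),f_w(y))=d(f_w(y),f_w(x))$; the triangle inequality follows from $d(f_w(x),f_w(z))\le d(f_w(x),f_w(y))+d(f_w(y),f_w(z))$ for each word $w$ after taking the convex combination with weights $p_w$.

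The recursion $\sum_{i=0}^{N-1}p_i\psi_n(f_i(x),f_i(y))=\psi_{n+1}(x,y)$ is the standard one-step Markov decomposition. Writing a word of length $n+1$ as $(i,\eta)$ with $|\eta|=n$, the identity $f_{i\eta_1\ldots\eta_n}(x)=f_{\eta_1\ldots\eta_n}(f_i(x))$ together with the product structure $p_{i\eta}=p_i\,p_\eta$ of the Bernoulli weights yields the result immediately from the finite-sum formula above.

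Assume now that every $f_i$ is a homeomorphism. Since $\bp$ is non-degenerate, $p_w>0$ for every word $w$ of length $n$, so $\psi_n(x,y)=0$ forces $d(f_w(x),f_w(y))=0$ and hence $f_w(x)=f_w(y)$ for every such $w$. Fixing any one such $w$ and using that it is a composition of injective maps yields $x=y$, so $\psi_n$ is a metric. For topological equivalence one needs to check that $d(x_k,x)\to 0$ iff $\psi_n(x_k,x)\to 0$: the forward direction uses continuity of each $f_w$ together with the finite-sum expression for $\psi_n$; for the converse, fix any one word $w_0$, extract from $\psi_n(x_k,x)\to 0$ that $d(f_{w_0}(x_k),f_{w_0}(x))\to 0$, and apply continuity of the inverse homeomorphism $f_{w_0}^{-1}$.

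No step looks delicate. The one minor point requiring care is that vanishing of the finite average $\psi_n(x,y)$ forces each summand to vanish, which is precisely what non-degeneracy of $\bp$ guarantees; without this hypothesis the claim that $\psi_n$ separates points would fail.
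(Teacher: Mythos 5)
Your proposal is correct and follows essentially the same route as the paper: verify the pseudometric axioms and continuity termwise from the explicit finite-sum representation, use the product structure of the Bernoulli weights for the recursion, use non-degeneracy of $\bp$ plus injectivity to separate points, and use continuity of the maps and their inverses for topological equivalence. The paper merely states several of these steps as immediate, whereas you spell them out; the substance is identical.
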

\begin{proof}
Continuity is immediate. 
The property $\psi_n(x,y)=\psi_n(y,x)$ is immediate by the symmetry of the metric $d$. As
\[\begin{split}
	\psi_n(x,z)
	= \sum_{i=0}^{N-1}p_id(f_\xi^n(x),f_\xi^n(z))
	&\le \sum_{i=0}^{N-1}p_i\big(d(f_\xi^n(x),f_\xi^n(y))+d(f_\xi^n(y),f_\xi^n(z))\big)\\
	&=\bE(Z_{n,d}^{x,y})+\bE(Z_{n,d}^{y,z}),
\end{split}\]
the triangle inequality holds true for $\psi_n$.    Thus, $\psi_n$ is a pseudo metric on $M$. The second property is immediate.

Finally note that $\psi_n(x,y)=0$ if and only if, $d(f_{\xi_1\ldots\xi_n}(x),f_{\xi_1\ldots\xi_n}(y))=0$ for all $\xi_1,\ldots,\xi_n\in\{0,\ldots,N-1\}$. Hence, if every $ f_i $ is a homeomorphism, then $x=y$. 
In this case, it also is immediate to see that $\psi_n$ generates the same topology as $ d $.
\end{proof}

Given $k\in\bN$ and $\lambda\in(0,1)$, consider $d_{k,\lambda}\colon M\times M\to [0,\infty)$ defined by
\begin{equation}\label{metricaECA}
    d_{k,\lambda}(x,y)
    \eqdef d(x,y)
    +\frac{1}{\lambda^{1/k}}\bE (Z_{1,d}^{x,y})
    +\cdots+\frac{1}{\lambda^{(k-1)/k}}\bE (Z_{k-1,d}^{x,y}).
\end{equation}
The following is an immediate consequence of Lemma \ref{lemaABC1}.

\begin{lemma}\label{lemrem:strequ}
 The function $d_{k,\lambda}$ defined in \eqref{metricaECA} is a metric which is  equivalent to $d$. Moreover, if all the maps in $\cF$ are Lipschitz, then in $d_{k,\lambda}$ is strongly equivalent to $d$.
\end{lemma}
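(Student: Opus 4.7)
The plan is to read the statement directly off Lemma \ref{lemaABC1}. Setting $\psi_0\eqdef d$ and $\psi_j(\cdot,\cdot)\eqdef\bE(Z_{j,d}^{\cdot,\cdot})$ for $j\ge 1$, I would rewrite
\[
	d_{k,\lambda}(x,y)
	= \psi_0(x,y) + \sum_{j=1}^{k-1}\lambda^{-j/k}\psi_j(x,y)
\]
as a positive linear combination of the continuous pseudometrics supplied by Lemma \ref{lemaABC1}. Such combinations inherit symmetry and the triangle inequality summand by summand, and because $\psi_0 = d$ is already a genuine metric, positive definiteness of $d_{k,\lambda}$ is automatic: $d_{k,\lambda}(x,y) = 0$ forces $d(x,y) = 0$. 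This dispenses with any injectivity hypothesis on the maps $f_i$, in contrast with the last part of Lemma \ref{lemaABC1}.

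Next I would verify topological equivalence. One direction is trivial, since $d \le d_{k,\lambda}$ implies that convergence in $d_{k,\lambda}$ forces convergence in $d$. For the converse, fix a sequence $x_n \to x$ in $d$. By the continuity of each $\psi_j$ on $M \times M$ (Lemma \ref{lemaABC1}) and the fact that $\psi_j(x,x) = 0$, one gets $\psi_j(x_n, x) \to 0$ for each $j$; summing with the fixed positive coefficients yields $d_{k,\lambda}(x_n, x) \to 0$. Hence $d$ and $d_{k,\lambda}$ generate the same topology.

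For the strong equivalence under the Lipschitz hypothesis, I would let $L \eqdef \max_{0\le i \le N-1}\Lip(f_i)$. For every $\xi \in \Sigma_N^+$, iterating Lipschitz bounds along the composition yields $Z_{j,d}^{x,y}(\xi) \le L^j\, d(x,y)$, whence $\psi_j(x,y) \le L^j d(x,y)$. Substituting into the defining formula for $d_{k,\lambda}$ produces an explicit constant
\[
	C \eqdef 1 + \sum_{j=1}^{k-1}\lambda^{-j/k}L^j
\]
such that $d(x,y) \le d_{k,\lambda}(x,y) \le C\, d(x,y)$, which is strong equivalence. I do not foresee any real obstacle: the whole argument is bookkeeping on top of Lemma \ref{lemaABC1}, with the only mildly subtle point being that one must invoke the summand $\psi_0 = d$ itself in order to separate points.
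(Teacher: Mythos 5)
Your argument is correct and is exactly the route the paper intends: it states the lemma as an immediate consequence of Lemma \ref{lemaABC1}, and your write-up simply fills in the bookkeeping (positive combination of the pseudometrics $\psi_j$ with the summand $\psi_0=d$ giving definiteness, continuity of the $\psi_j$ giving topological equivalence, and the bound $\psi_j\le L^j d$ giving the explicit constant for strong equivalence). No gaps; nothing further needed.
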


\begin{proposition}\label{prolem1:1}
If $(\cF, \bp,d)$ is $k$-ECA with contraction rate $\lambda\in(0,1)$, then $(\cF, \bp,d_{k,\lambda})$ is CA with contraction rate $\lambda^{1/k}$.
\end{proposition}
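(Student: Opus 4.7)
The plan is a direct telescoping computation. The metric $d_{k,\lambda}$ is tailor-made so that averaging one step of the IFS shifts each index upward by one: the term $d(x,y)$ is replaced by $\bE(Z_{1,d}^{x,y})$, the term $\bE(Z_{1,d}^{x,y})$ by $\bE(Z_{2,d}^{x,y})$, and so on, up to the problematic last term $\bE(Z_{k,d}^{x,y})$, which is then dominated using the $k$-ECA hypothesis.

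More precisely, I would first apply Lemma~\ref{lemaABC1} to obtain, for every $j\ge 0$,
\[
    \sum_{i=0}^{N-1}p_i\,\bE\bigl(Z_{j,d}^{f_i(x),f_i(y)}\bigr)
    =\bE\bigl(Z_{j+1,d}^{x,y}\bigr).
\]
Using this identity term by term in the definition of $d_{k,\lambda}$ yields
\[
\sum_{i=0}^{N-1}p_i\, d_{k,\lambda}(f_i(x),f_i(y))
= \bE(Z_{1,d}^{x,y})
+\frac{1}{\lambda^{1/k}}\bE(Z_{2,d}^{x,y})
+\cdots+\frac{1}{\lambda^{(k-1)/k}}\bE(Z_{k,d}^{x,y}).
\]
On the other hand, multiplying the definition of $d_{k,\lambda}(x,y)$ by $\lambda^{1/k}$ gives
\[
\lambda^{1/k}d_{k,\lambda}(x,y)
= \lambda^{1/k}d(x,y)+\bE(Z_{1,d}^{x,y})+\frac{1}{\lambda^{1/k}}\bE(Z_{2,d}^{x,y})+\cdots+\frac{1}{\lambda^{(k-2)/k}}\bE(Z_{k-1,d}^{x,y}).
\]

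Subtracting the first expression from the second, every ``middle'' summand cancels and one is left with
\[
\lambda^{1/k}d_{k,\lambda}(x,y)-\sum_{i=0}^{N-1}p_i\, d_{k,\lambda}(f_i(x),f_i(y))
=\lambda^{1/k}d(x,y)-\frac{1}{\lambda^{(k-1)/k}}\bE(Z_{k,d}^{x,y}).
\]
The $k$-ECA hypothesis \eqref{eq01} states $\bE(Z_{k,d}^{x,y})\le \lambda d(x,y)$, so the right-hand side is $\ge \lambda^{1/k}d(x,y)-\lambda^{1/k}d(x,y)=0$. This yields the desired CA inequality with contraction rate $\lambda^{1/k}$, completing the proof.

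There is essentially no obstacle; the only thing to be careful about is bookkeeping of the indices and exponents of $\lambda$ so that the telescoping goes through cleanly. The construction of $d_{k,\lambda}$ already encodes the correct geometric weights $\lambda^{-j/k}$ precisely so that the shift by one step produces a uniform factor $\lambda^{1/k}$ while the boundary terms (the original $d$-term and the emerging $\bE(Z_{k,d}^{x,y})$-term) match up via \eqref{eq01}.
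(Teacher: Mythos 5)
Your proposal is correct and follows essentially the same route as the paper: both rest on the shift identity $\sum_i p_i\,\bE(Z_{j,d}^{f_i(x),f_i(y)})=\bE(Z_{j+1,d}^{x,y})$ from Lemma \ref{lemaABC1}, bound the emerging term $\bE(Z_{k,d}^{x,y})$ by $\lambda d(x,y)$ via \eqref{eq01}, and recover $\lambda^{1/k}d_{k,\lambda}(x,y)$; presenting it as a cancellation after subtraction rather than factoring out $\lambda^{1/k}$ is only a cosmetic difference.
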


\begin{proof}
Assume that $(\cF, \bp,d)$ is ECA with contraction rate $\lambda\in(0,1)$ and $k\in\bN$.
If $k=1$, then $d_{k,\lambda}=d$ and $(\cF, \bp,d)$ is CA. 
If $k\ge2$, then it follows from the definition of $d_{k,\lambda}$ that
\begin{align*}
    \mathbb{E}
&\Big(Z_{1,d_{k,\lambda}}^{x,y}\Big)
	=\sum_{i=0}^{N-1}p_id_{k,\lambda}(f_i(x),f_i(y))\\
&= \sum_{i=0}^{N-1}p_i\Big(d (f_i(x),f_i(y))
	+ \frac{1}{\lambda^{1/k}}\mathbb{E}\Big(Z_{1,d}^{f_i(x),f_i(y)}\Big)
	+\cdots
	+ \frac{1}{\lambda^{(k-1)/k}}\mathbb{E}\Big(Z_{k-1,d}^{f_i(x),f_i(y)}\Big)\Big)\\
&=\mathbb{E}\Big(Z_{1,d}^{x,y}\Big)
	+\frac{1}{\lambda^{1/k}}\mathbb{E}\Big(Z_{2,d}^{x,y}\Big)
	+\cdots+\frac{1}{\lambda^{(k-2)/k}}\mathbb{E}\Big(Z_{k-1,d}^{x,y}\Big)
	+\frac{1}{\lambda^{(k-1)/k}}\mathbb{E}\Big(Z_{k,d}^{x,y}\Big)\\
&\le\mathbb{E}\Big(Z_{1,d}^{x,y}\Big)
	+\frac{1}{\lambda^{1/k}}\mathbb{E}\Big(Z_{2,d}^{x,y}\Big)
	+\cdots+\frac{1}{\lambda^{(k-2)/k}}\mathbb{E}\Big(Z_{k-1,d}^{x,y}\Big)
	+\frac{1}{\lambda^{(k-1)/k}}\lambda d(x,y)\\
&=\lambda^{1/k}\Big(d(x,y)
	+\frac{1}{\lambda^{1/k}}\mathbb{E}\Big(Z_{1,d}^{x,y}\Big)
	+\frac{1}{\lambda^{2/k}}\mathbb{E}\Big(Z_{2,d}^{x,y}\Big)
	+\cdots+\frac{1}{\lambda^{(k-1)/k}}\mathbb{E}\Big(Z_{k-1,d}^{x,y}\Big)
	 \Big)\\
&=\lambda^{1/k} d_{k,\lambda}(x,y).
\end{align*}
Hence, $(\cF,\bp,d_{k,\lambda})$ is CA with contraction rate $\lambda^{1/k}$.
\end{proof}

%---------------------------------------------------------------------------
\subsubsection{NEA}
%---------------------------------------------------------------------------

The triple $(\cF, \bp,d)$ is said to be  \textit{non-expansive on average (NEA)} if 
\begin{align}\label{eq2}
    \bE (Z_{1,d}^{x,y})\leq d(x,y)
        \quad\text{for every $x,y\in M$}.
\end{align}

\begin{remark}
The NEA property was introduced in \cite{JarnerTweedie} as \emph{non-separating on average}. See also \cite[Part II]{Sza:03} for a study of NEA iterated function systems and associated non-expansive Markov operators. 
\end{remark}

The following is an immediate consequence of Jensen's inequality.

\begin{lemma}
If $(\cF, \bp,d)$ is NEA then for any $\alpha\in(0,1)$ the triple $(\cF, \bp,d^{\alpha})$ is NEA.
\end{lemma}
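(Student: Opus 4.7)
The plan is to apply Jensen's inequality directly to the concave function $t \mapsto t^{\alpha}$, exploiting the fact that $\alpha \in (0,1)$. The statement to prove is the single inequality
\[
    \sum_{i=0}^{N-1} p_i\,d(f_i(x),f_i(y))^{\alpha}
    \le d(x,y)^{\alpha}
    \quad\text{for every } x,y\in M,
\]
so the argument is pure calculation with no need to return to the random-orbit formalism beyond rewriting $\bE(Z_{1,d^\alpha}^{x,y})$ as this finite weighted sum.

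The first step is to observe that since $\alpha \in (0,1)$, the function $\varphi(t) = t^{\alpha}$ is concave and nondecreasing on $[0,\infty)$. Applying Jensen's inequality with the probability vector $\bp$ and the nonnegative values $t_i \eqdef d(f_i(x),f_i(y))$ yields
\[
    \sum_{i=0}^{N-1} p_i\, t_i^{\alpha}
    \le \Big(\sum_{i=0}^{N-1} p_i\, t_i\Big)^{\alpha}
    = \bE(Z_{1,d}^{x,y})^{\alpha}.
\]
The second step is to invoke the NEA hypothesis for $(\cF,\bp,d)$, namely $\bE(Z_{1,d}^{x,y}) \le d(x,y)$, and then use the monotonicity of $\varphi$ to obtain $\bE(Z_{1,d}^{x,y})^{\alpha} \le d(x,y)^{\alpha}$. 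Chaining the two inequalities gives the required estimate.

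There is essentially no obstacle here: the only thing worth being slightly careful about is that $d^{\alpha}$ is indeed a metric for $\alpha \in (0,1]$ (already noted in the introduction), so the statement ``$(\cF,\bp,d^{\alpha})$ is NEA'' is meaningful. Since the whole argument is a two-line application of Jensen's inequality followed by monotonicity, the proof could reasonably be compressed into the remark ``follows from Jensen's inequality applied to the concave map $t\mapsto t^{\alpha}$,'' as the excerpt itself previews.
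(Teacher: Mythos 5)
Your proof is correct and is exactly the argument the paper intends: the lemma is stated there as "an immediate consequence of Jensen's inequality," and your two steps (Jensen for the concave map $t\mapsto t^{\alpha}$ applied to the weights $p_i$ and values $d(f_i(x),f_i(y))$, followed by the NEA bound $\bE(Z_{1,d}^{x,y})\le d(x,y)$ and monotonicity) fill in precisely that computation.
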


\begin{lemma}\label{lemrem:SA}
Assume $(M,d)$ is bounded. If $(\cF, \bp,d)$ is SA and $D$ is a metric  equivalent to $d$ such that $(\cF, \bp,D)$ is NEA, then $(\cF, \bp,D)$ is SA.
\end{lemma}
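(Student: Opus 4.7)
I aim to show $\phi_n(x,y)\eqdef\bE(Z_{n,D}^{x,y})\to0$ as $n\to\infty$ for every $x,y\in M$. The strategy combines NEA of $D$ (to make $\phi_n$ non-increasing) with SA of $d$, bridging the two metrics by the modulus of continuity of the identity map $(M,d)\to(M,D)$ on the bounded set $M$.

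First, let $\mathcal{F}_n$ be the $\sigma$-algebra on $\Sigma_N^+$ generated by $\xi_1,\ldots,\xi_n$. Conditioning on $\mathcal{F}_n$ (so that $f^n_{\xi}(x)$ and $f^n_{\xi}(y)$ are $\mathcal{F}_n$-measurable while $\xi_{n+1}$ is independent of $\mathcal{F}_n$ with distribution $\bp$) and applying NEA pointwise to the random pair $(f^n_{\xi}(x),f^n_{\xi}(y))$ gives
\[
	\bE(Z_{n+1,D}^{x,y}\mid\mathcal{F}_n)
	=\sum_{i=0}^{N-1}p_i\,D\bigl(f_i(f^n_{\xi}(x)),f_i(f^n_{\xi}(y))\bigr)
	\le Z_{n,D}^{x,y}.
\]
Hence $(Z_{n,D}^{x,y})_n$ is a non-negative supermartingale and $\phi_n\searrow L(x,y)\ge0$ for some limit $L(x,y)$.

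Next, define the modulus
\[
	\omega(\varepsilon)\eqdef\sup\{D(u,v):u,v\in M,\,d(u,v)\le\varepsilon\}.
\]
Splitting the integral $\phi_n(x,y)=\int D(f^n_{\xi}(x),f^n_{\xi}(y))\,d\mu(\xi)$ according to whether the integrand satisfies $d(f^n_{\xi}(x),f^n_{\xi}(y))\le\varepsilon$ yields
\[
	\phi_n(x,y)
	\le\omega(\varepsilon)
	+\diam(M,D)\cdot\mu\{\xi:d(f^n_{\xi}(x),f^n_{\xi}(y))>\varepsilon\}.
\]
By SA of $d$ and Chebyshev's inequality, $\mu\{d(f^n_{\xi}(x),f^n_{\xi}(y))>\varepsilon\}\le \varepsilon^{-1}\bE(Z_{n,d}^{x,y})\to0$. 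Letting $n\to\infty$ gives $L(x,y)\le\omega(\varepsilon)$, and then letting $\varepsilon\to0^+$ forces $L(x,y)=0$, which is precisely SA for $(\cF,\bp,D)$.

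The main obstacle lies in ensuring that $\omega$ is finite and that $\omega(\varepsilon)\to0$ as $\varepsilon\to0^+$, i.e.\ that the identity map $(M,d)\to(M,D)$ is uniformly continuous with bounded image. Both conclusions hold when $(M,d)$ is compact, where topological equivalence of $d$ and $D$ upgrades to uniform equivalence by continuity of the identity on a compact space. The lemma's ``bounded'' hypothesis is understood to operate in tandem with the paper's ambient compactness setting (as in Theorem \ref{pro:1} and beyond); in a genuinely bounded but non-compact setting a supplementary uniform-equivalence assumption would be needed to close the gap between $d$-small and $D$-small pairs, while the supermartingale step remains unchanged.
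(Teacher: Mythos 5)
Your argument is correct in the setting where the lemma is actually applied (compact $M$), but it takes a genuinely different route from the paper's. The paper argues: SA for $d$ gives $L^1$-convergence of $Z_{n,d}^{x,y}$ to $0$, hence convergence in probability, hence a subsequence with $Z_{n_k,d}^{x,y}\to 0$ almost surely (as in Lemma \ref{SAimpliesprox}); equivalence of the metrics transfers this to $Z_{n_k,D}^{x,y}\to 0$ almost surely; dominated convergence gives $\bE(Z_{n_k,D}^{x,y})\to 0$; and finally NEA, via the monotonicity of $n\mapsto\bE(Z_{n,D}^{x,y})$ as in Lemma \ref{lemaABC}\,(2), upgrades the subsequence to the full sequence. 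You bypass subsequences entirely: the splitting $\bE(Z_{n,D}^{x,y})\le\omega(\varepsilon)+\diam(M,D)\,\mu\{Z_{n,d}^{x,y}>\varepsilon\}$ together with Markov's inequality and SA for $d$ gives $\limsup_n\bE(Z_{n,D}^{x,y})\le\omega(\varepsilon)$ directly. Note that once $\omega(\varepsilon)\to0$ this already yields SA for $D$ with no use of NEA, so your supermartingale step is actually redundant within your own scheme, whereas NEA is what the paper needs to pass from the subsequence to the whole sequence. The obstacle you flag --- that topological equivalence on a merely bounded space gives neither uniform equivalence of $d$ and $D$ nor $D$-boundedness --- is real, but it is equally (if implicitly) present in the paper's proof: the step transferring $Z_{n_k,d}^{x,y}\to0$ a.s.\ into $Z_{n_k,D}^{x,y}\to0$ a.s.\ concerns distances between two moving sequences and likewise fails in general for topologically equivalent metrics on a bounded non-compact space, and the dominated convergence step needs a $D$-integrable dominating function, e.g.\ $\diam(M,D)<\infty$. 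Both arguments close up under compactness (or uniform equivalence plus $D$-boundedness), which holds in every application of the lemma in the paper (the corollary following Lemma \ref{lem:2} and the examples in Section \ref{sec:6}), so your explicit caveat does not put your proof at a disadvantage relative to the paper's; what each approach buys is that the paper's works pointwise along a subsequence and leans on NEA, while yours makes the needed uniformity explicit and, in exchange, shows NEA is superfluous in the uniform setting.
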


\begin{proof}
 If $(\cF, \bp,d)$ is SA, then $\bE \big(Z_{n,d}^{x,y}\big)\to0$ as $n\to\infty$ for every $x,y\in M$. Arguing as in the proof of Lemma \ref{SAimpliesprox}, there exists a sub-sequence $(n_k)_k$ such that $Z_{n_k,d}^{x,y}\to0$ almost surely to 0 as $k\to \infty$. Since $D$ and $d$ are assumed to be  equivalent, $Z_{n_k,D}^{x,y}\to$ almost surely as $k\to \infty$. Hence, by the Dominated Convergence Theorem, $\bE (Z_{n_k,D}^{x,y})\to0$ as $k\to \infty$. Since $(\cF, \bp,D)$ is NEA, $\bE \big(Z_{n,D}^{x,y}\big)\to0$ as $n\to\infty$. This proves that $(\cF, \bp,D)$ is SA.
\end{proof}

We state the following straightforward implications without proof.

\begin{lemma}
	C implies CA,  CA implies NEA, and EC implies ECA.
\end{lemma}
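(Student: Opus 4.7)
The plan is to verify each of the three implications directly from the definitions; each one reduces to a one-line estimate, so the main ``work'' is simply choosing the right constant.

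For \textbf{C implies CA}: Since $\cF$ is contracting, for each $i=0,\ldots,N-1$ there exists $\lambda_i\in(0,1)$ with $d(f_i(x),f_i(y))\le\lambda_i d(x,y)$ for all $x,y\in M$. I would set $\lambda\eqdef\max_{i}\lambda_i$, which belongs to $(0,1)$ because the index set is finite, and then bound $\bE(Z_{1,d}^{x,y})=\sum_{i=0}^{N-1}p_i d(f_i(x),f_i(y))\le\lambda d(x,y)$, yielding CA with contraction rate $\lambda$.

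For \textbf{CA implies NEA}: This is immediate since any contraction rate $\lambda\in(0,1)$ satisfies $\lambda d(x,y)\le d(x,y)$, so the CA inequality at once gives the NEA inequality \eqref{eq2}.

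For \textbf{EC implies ECA}: Suppose $\cF^k$ is contracting, so for every word $(\xi_1,\ldots,\xi_k)\in\{0,\ldots,N-1\}^k$ there exists $\lambda_{\xi_1\ldots\xi_k}\in(0,1)$ with $d(f_{\xi_1\ldots\xi_k}(x),f_{\xi_1\ldots\xi_k}(y))\le\lambda_{\xi_1\ldots\xi_k}d(x,y)$. Because there are only finitely many such words, $\lambda\eqdef\max_{(\xi_1,\ldots,\xi_k)}\lambda_{\xi_1\ldots\xi_k}<1$. Writing the expectation explicitly with respect to the Bernoulli measure $\mu$ and noting that $Z_{k,d}^{x,y}(\xi)$ depends only on $\xi_1,\ldots,\xi_k$, I would estimate
\[
\bE(Z_{k,d}^{x,y})
=\sum_{\xi_1,\ldots,\xi_k}p_{\xi_1}\cdots p_{\xi_k}\,d(f_{\xi_1\ldots\xi_k}(x),f_{\xi_1\ldots\xi_k}(y))
\le\lambda d(x,y),
\]
which is precisely \eqref{eq01} and gives ECA with contraction rate $\lambda$.

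None of the three steps presents any real obstacle; the only subtle point worth stating explicitly is that each ``max over finitely many elements of $(0,1)$'' is again strictly less than $1$, which is what lets the rates collapse into a single constant. No topological or measure-theoretic input is needed beyond the definitions in Section \ref{sec:syncont}.
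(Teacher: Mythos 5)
Your proof is correct, and since the paper explicitly states this lemma without proof as a collection of straightforward implications, your routine verification (uniform rate via the maximum over finitely many contraction constants, monotonicity of the CA inequality, and the same finite-max argument for the words of length $k$) is exactly the intended argument. The only point requiring care—that a maximum of finitely many numbers in $(0,1)$ is still strictly less than $1$—is addressed explicitly, so there is nothing to add.
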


\begin{remark}
	If $(\cF, \bp,d)$ is NEA, then for all $k\in\bN$ and $\lambda\in(0,1)$ the metric $d_{k,\lambda}$ defined in \eqref{metricaECA} is strongly equivalent to $d$. 
\end{remark}

\begin{lemma}\label{lemaABC}
For $n\geq 0$, let $\psi_n(x,y)\eqdef\bE (Z_{n,d}^{x,y})$ be as in Lemma \ref{lemaABC1}. If $(\cF,\bp,d)$ is NEA, then for every $x,y\in M$ it holds
\begin{enumerate}
    \item  $\psi_1(x,y)\leq d(x,y)$,
    \item $\psi_n(x,y)$ is non-decreasing in $n$ and hence the limit $\lim_{n\to\infty}\psi_n(x,y)$ exists,
    \item Assuming that $M$ is compact, if for every $x,y\in M$ we have
$\lim_{n\to\infty}\psi_n(x,y)=0$, then $\psi_n\to 0$ uniformly.
\end{enumerate}
\end{lemma}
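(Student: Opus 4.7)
The plan is to address the three items in order, leveraging the NEA hypothesis \eqref{eq2} and the conditional-expectation recursion $\psi_{n+1}(x,y)=\sum_{i=0}^{N-1}p_i\,\psi_n(f_i(x),f_i(y))$ established in Lemma~\ref{lemaABC1}.

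For part~(1), I would simply unfold $\psi_1(x,y)=\bE(Z_{1,d}^{x,y})=\sum_{i=0}^{N-1}p_i\,d(f_i(x),f_i(y))$, which reduces the claim $\psi_1(x,y)\le d(x,y)$ to the very definition of NEA.

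For part~(2), the plan is to prove by induction on $n$ the comparison $\psi_{n+1}(x,y)\le \psi_n(x,y)$, valid for every $x,y\in M$; together with the uniform lower bound $\psi_n\ge 0$, this makes $(\psi_n(x,y))_n$ a bounded monotone sequence of reals, so the limit $\lim_{n\to\infty}\psi_n(x,y)$ exists. The base case $n=0$ is exactly part~(1), recalling that $\psi_0=d$. For the inductive step, assume $\psi_{n+1}(u,v)\le\psi_n(u,v)$ for all $u,v\in M$; applying this hypothesis at $(u,v)=(f_i(x),f_i(y))$, multiplying by $p_i$, and summing against $i=0,\ldots,N-1$, the recursion from Lemma~\ref{lemaABC1} applied at levels $n$ and $n+1$ rewrites the two sides as $\psi_{n+2}(x,y)\le\psi_{n+1}(x,y)$, closing the induction.

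For part~(3), my plan is to invoke Dini's theorem on $M\times M$: by Lemma~\ref{lemaABC1} each $\psi_n\colon M\times M\to\bR$ is continuous; by part~(2) the sequence $(\psi_n)$ is monotone; the pointwise limit is the constant function $0$, which is continuous; and $M\times M$ is compact. Dini then yields the uniform convergence $\psi_n\to 0$. I do not foresee any serious obstacle; the argument is essentially routine given Lemma~\ref{lemaABC1} and NEA. The only point worth flagging is that the induction in~(2) produces $\psi_{n+1}\le\psi_n$, i.e.\ a sequence approaching the limit from above, which is the variant of Dini's theorem one applies here.
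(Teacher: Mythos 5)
Your proposal is correct and follows essentially the same route as the paper: part (1) is the definition of NEA, part (3) is Dini's theorem exactly as in the paper, and for part (2) the paper obtains $\psi_{n+1}\le\psi_n$ by conditioning on the first $n$ symbols and applying NEA to the final iteration in one direct computation, whereas you condition on the first symbol and induct --- a cosmetic difference resting on the same recursion from Lemma \ref{lemaABC1}. Note that both arguments in fact show the sequence is non-increasing (the word ``non-decreasing'' in the statement is evidently a slip), which, as you correctly flag, is the monotone-from-above variant needed for Dini.
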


In the following proof and below we use the following simplifying notation  
\[
	 p_{\xi_1\ldots\xi_n}\eqdef p_{\xi_1}\cdots p_{\xi_n}.
\]

\begin{proof}
Property (1) just restates the definition of NEA.
To show (2), check that
\[\begin{split}
        \psi_{n+1}(x,y)&=\bE (Z_{n+1,d}^{x,y})
        =\sum_{\xi_1,\ldots,\xi_{n+1}=0}^{N-1}
        	p_{\xi_1,\ldots,\xi_{n+1}}d(f_{\xi_1,\ldots,\xi_{n+1}}(x),f_{\xi_1,\ldots,\xi_{n+1}}(y))\\
        &=\sum_{\xi_1,\ldots,\xi_n=0}^{N-1}p_{\xi_1,\ldots,\xi_n}\sum_{\ell=1}^Np_\ell\,
        		d\big(f_\ell(f_{\xi_1,\ldots,\xi_n}(x)),f_\ell(f_{\xi_1,\ldots,\xi_n}(y))\big)\\
	&=\sum_{\xi_1,\ldots,\xi_n=0}^{N-1}p_{\xi_1,\ldots,\xi_n}
		\bE\Big(Z^{f_{\xi_1,\ldots,\xi_n}(x),f_{\xi_1,\ldots,\xi_n}(y)}_{1,d}\Big)\\
	{\tiny{\text{(by NEA)}}}\quad	
        &\leq\sum_{\xi_1,\ldots,\xi_n=0}^{N-1}
        		p_{\xi_1,\ldots,\xi_n}d(f_{\xi_1,\ldots,\xi_n}(x),f_{\xi_1,\ldots,\xi_n}(y))
        =\bE (Z_{n,d}^{x,y})
        =\psi_n(x,y).
\end{split}\]
This, together with $\psi_n\ge0$ implies item (2). 

To prove (3), assume that $M$ is compact and $\psi_n\to0$ point-wise. As the limit function is continuous, by Dini's theorem, convergence is uniform. 
\end{proof}

\begin{remark}
By \cite[Proposition 1]{KatSten} any IFS $\mathcal{F}=\{f_i\}_{i=0}^{N-1}$ of homeomorphisms of $(\bS^1,d)$ which is backward minimal there exists a metric $\rho$ on $\bS^1$  equivalent to $d$ on $\bS^1$ such that $(\cF, \bp,\rho)$ is NEA. 
\end{remark}

%---------------------------------------------------------------------------
\subsubsection{$\log$-CA}
%---------------------------------------------------------------------------

The triple $(\cF,\bp,d)$  is called \emph{$\log$-contractive on average ($\log$-CA)}  if there exists $\lambda < 1$ such that 
\[    \prod_{j=0}^{N-1} d(f_{j}(x),f_{j}(y))^{p_{j}}\leq \lambda d(x,y)
            \quad\text{for every $x,y\in M$},
\]or, equivalently, 
\[
	\bE \left(\log\frac{Z_{1,d}^{x,y}}{d(x,y)} \right)\leq \log \lambda<0
	        \quad\text{for every $x,y\in M, x\ne y$}.
\]

\begin{remark}
Consider $\mathcal{F}$ be an IFS of Lipschitz maps. Let $L$ defined by \eqref{Lconst:Lip}.
In \cite{DF}, the condition $\mathbb{E}(\log L)<0$, was called \emph{contracting on average}
which is stronger than $\log$-CA. The condition $\log$-CA was introduced in \cite[Page 84]{Elton87}.
\end{remark}

\begin{lemma}\label{logca1}
	CA implies $\log$-CA.
\end{lemma}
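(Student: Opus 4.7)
The plan is to derive $\log$-CA directly from CA via the weighted AM--GM inequality (equivalently, Jensen's inequality applied to the concave function $\log$), with the same constant $\lambda$. Since CA provides a control on the arithmetic mean of the one-step displacements $d(f_j(x),f_j(y))$ weighted by $p_j$, and the geometric mean never exceeds the arithmetic mean, the desired bound on the product is immediate.

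Concretely, I would fix $x,y \in M$ with $x\neq y$, set $a_j \eqdef d(f_j(x),f_j(y)) \ge 0$ for $j=0,\ldots,N-1$, and invoke the weighted AM--GM inequality
\[
    \prod_{j=0}^{N-1} a_j^{p_j} \;\le\; \sum_{j=0}^{N-1} p_j a_j
\]
(valid for nonnegative $a_j$ and probability weights $p_j$, with the convention $0^0=1$ or directly by noting that the left-hand side is $0$ whenever some $a_j=0$). Combining this with the CA hypothesis
\[
    \sum_{j=0}^{N-1} p_j a_j \;=\; \bE(Z_{1,d}^{x,y}) \;\le\; \lambda\, d(x,y)
\]
yields $\prod_j d(f_j(x),f_j(y))^{p_j} \le \lambda\, d(x,y)$ with the \emph{same} $\lambda\in(0,1)$. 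The case $x=y$ is trivial as both sides vanish.

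There is essentially no obstacle: the argument is a one-line application of a classical inequality together with the hypothesis. The only point that deserves a remark is the harmless handling of the case where some $a_j$ vanishes, which makes the product equal to $0$ and so the inequality becomes trivial. Equivalently, phrasing CA in the logarithmic form, one observes
\[
    \bE\!\left(\log\frac{Z_{1,d}^{x,y}}{d(x,y)}\right) \;\le\; \log\bE\!\left(\frac{Z_{1,d}^{x,y}}{d(x,y)}\right) \;\le\; \log\lambda \;<\;0
\]
by Jensen's inequality and CA, which is the second equivalent form of $\log$-CA stated in the paper.
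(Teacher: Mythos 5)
Your proof is correct and follows essentially the same route as the paper: the paper divides by $d(x,y)$ and applies Jensen's inequality to $\log$, which is exactly the weighted AM--GM step you use (and you even restate the Jensen form at the end). Your remark on the degenerate case where some $d(f_j(x),f_j(y))=0$ is a harmless addition the paper leaves implicit.
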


\begin{proof}
Assuming CA with contraction rate $\lambda\in(0,1)$, note that $\bE (Z_{1,d}^{x,y}/d(x,y))\leq \lambda$. Hence, by Jensen's inequality
$$\bE \left(\log\frac{Z_{1,d}^{x,y}}{d(x,y)} \right)\leq \log \bE \left(\frac{Z_{1,d}^{x,y}}{d(x,y)} \right)\leq \log\lambda<0,$$
proving the lemma.
\end{proof}

\begin{remark}
The concept of $\log$-CA was introduced  in \cite{BE}. 
Assuming $(M, d)$ to be a complete metric space, $\mathcal{F}$ to be an IFS of Lipschitz maps, and $(\cF, \bp, d)$ to be $\log$-CA, they prove the existence of an attractive (hence unique) stationary measure (extending previous results obtained in the case when $M$ is compact, see references in \cite{BE}).
\end{remark}

The following example is presented in \cite{Edalat} to illustrate that $\log$-CA is weaker than C. Indeed, it also shows that  $\log$-CA is weaker than CA.

\begin{example}[$\log$-CA, but not NEA and not CA]
Let $M=[0,1]$ and $d(x,y)\eqdef |x-y|$. Let $\bp=(\frac{1}{2},\frac{1}{2})$. Consider the IFS $\mathcal{F}=\{f_0,f_1\}$ given by
\[
	f_0,f_1\colon M\to M,\quad
	f_0(x)\eqdef\frac{x}{3},\,
	f_1(x)\eqdef\min\{1,2x\}.
\]
Note that for all $x, y \in [0, 1]$ it holds
\[
	\left[d(f_0(x),f_0(y))\right]^{1/2}\left[d(f_1(x),f_1(y))\right]^{1/2}
	\leq \frac{2}{3}|x-y|
\]	
and hence $(\cF,\bp,d)$ is $\log$-CA. On the other hand, for $x,y\in[0,1/2]$ it holds  $f_1(x)=2x$ and $f_1(y)=2y$, so that
\begin{align*}
    \bE (Z_{1,d}^{x,y})
    &=\frac{1}{2}\left(d(f_0(x),f_0(y))+d(f_1(x),f_1(y)) \right)
    =\frac{1}{2}\left(\frac{1}{3}d(x,y)+2d(x,y) \right),
\end{align*}
which implies that $(\cF,\bp,d)$ is not NEA and thus not CA.
\end{example}

 \begin{proposition}\label{prologca2}
Assume that $\mathcal{F}$ is an IFS of Lipschitz maps and $(\cF, \bp, d)$ is $\log$-CA. Then, there exists $\alpha\in(0,1]$ such that $(\cF, \bp, d^{\alpha})$ is CA.
 \end{proposition}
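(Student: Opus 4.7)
The plan is to reduce the target CA inequality for $d^\alpha$ to a uniform-in-$(x,y)$ estimate on the ratios
\[
	\phi_\alpha(x,y) \eqdef \sum_j p_j\, r_j(x,y)^\alpha \leq \lambda' < 1,
	\qquad
	r_j(x,y) \eqdef \frac{d(f_j(x), f_j(y))}{d(x,y)},
\]
which is exactly the CA condition for $(\cF,\bp,d^\alpha)$ with rate $\lambda'$. I would attack this by a second-order Taylor expansion $r_j^\alpha = e^{\alpha \log r_j}$ around $\alpha = 0$, hoping that the $\log$-CA hypothesis $\sum_j p_j \log r_j \leq \log\lambda < 0$ controls the linear term and forces $\phi_\alpha < 1$ for all small $\alpha > 0$. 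The Lipschitz hypothesis supplies the uniform upper bound $r_j \leq L \eqdef \max_j\Lip(f_j)$. If $L \leq 1$ the claim is immediate with $\alpha = 1$, so assume $L > 1$, and write $p_{\min} \eqdef \min_j p_j > 0$.

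The hard part is that the exponent $\log r_j$ is unbounded from below (the $f_j$ may contract some pair $(x,y)$ very strongly), so a single second-order Taylor remainder cannot be controlled uniformly in $(x,y)$. I would get around this by introducing a threshold $\delta \in (0,1)$, chosen small enough that $p_{\min}|\log \delta| > 4\log L$, and splitting the indices according to $r_j \gtrless \delta$. On $\{j : r_j \geq \delta\}$ the exponent lies in $[\log\delta, \log L]$, and the elementary inequalities $e^y \leq 1 + y + \tfrac12 y^2$ for $y \leq 0$ and $e^y \leq 1 + y + \tfrac12 L y^2$ for $y \in [0, \log L]$ yield the uniform bound
\[
	r_j^\alpha \leq 1 + \alpha\log r_j + C_0\alpha^2
	\qquad\text{for all } \alpha \in (0,1],
\]
with $C_0 \eqdef \tfrac12\max\{(\log\delta)^2,\, L(\log L)^2\}$ depending only on $\delta$ and $L$. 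On $\{j : r_j < \delta\}$ I would use the crude bound $r_j^\alpha \leq \delta^\alpha$ directly (consistent with $0^\alpha = 0$ if some $r_j$ vanishes).

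Finally I would distinguish two cases. If all $r_j \geq \delta$, summing the Taylor bound over $j$ and applying $\log$-CA gives $\phi_\alpha \leq 1 + \alpha\log\lambda + C_0\alpha^2$, which is strictly below $1$ for $\alpha \leq |\log\lambda|/(2C_0)$. Otherwise, some $r_{j_0} < \delta$, so $P_S \eqdef \sum_{j:\,r_j<\delta} p_j \geq p_{\min}$, and combining the two bounds yields
\[
	\phi_\alpha \leq P_S\delta^\alpha + (1 - P_S) + \alpha\log L + C_0\alpha^2 = 1 - P_S(1 - \delta^\alpha) + \alpha\log L + C_0\alpha^2.
\]
The convexity estimate $1 - \delta^\alpha \geq \alpha|\log\delta|/2$, valid for $\alpha \leq 1/|\log\delta|$, turns the right-hand side into $1 + \alpha\bigl(\log L - p_{\min}|\log\delta|/2\bigr) + C_0\alpha^2$, and by the choice of $\delta$ the linear coefficient is $\leq -\log L < 0$. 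Hence in both cases $\phi_\alpha(x,y) \leq 1 - c\alpha$ uniformly in $(x,y)$ once $\alpha$ is small enough, and any such $\alpha \in (0,1]$ yields CA for $(\cF, \bp, d^\alpha)$. The delicate point is the dependency order: $\delta$ must be fixed first (small relative to $L$ and $p_{\min}$) so that the linear term in case two is strictly negative; only then is $C_0$ fixed, and only then can a single $\alpha \in (0,1]$ be chosen that works for every pair $x \neq y$.
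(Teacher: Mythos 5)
Your proof is correct, and it takes a genuinely different route from the paper's. On correctness: since $\delta$, $L$, $p_{\min}$, $\lambda$, and hence $C_0$, are fixed before $\alpha$, both cases (all $r_j(x,y)\ge\delta$, respectively some $r_j(x,y)<\delta$) give the uniform bound $\sum_j p_j r_j(x,y)^\alpha\le 1-c\alpha$ with $c$ essentially $\tfrac12\min\{\lvert\log\lambda\rvert,\log L\}$, which is exactly CA for $(\cF,\bp,d^\alpha)$; the elementary inequalities you quote ($e^y\le 1+y+\tfrac12y^2$ for $y\le0$, $e^y\le 1+y+\tfrac12Ly^2$ on $[0,\log L]$, and $1-\delta^\alpha\ge\tfrac12\alpha\lvert\log\delta\rvert$ for $\alpha\lvert\log\delta\rvert\le1$) all check out, and the case $r_j=0$ is harmlessly absorbed into the $r_j<\delta$ case, where you never take a logarithm. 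The only step stated too quickly is the claim that $L\le1$ is ``immediate'' with $\alpha=1$: one still needs the one-line observation $\min_j r_j\le\prod_j r_j^{p_j}\le\lambda$, whence $\sum_j p_jr_j\le 1-p_{\min}(1-\lambda)<1$. As for the comparison: the paper instead introduces the random Lipschitz constant $L(\xi_1)\eqdef\Lip(f_{\xi_1})$, asserts that $\log$-CA yields $\bE(\log L)\le\log\lambda<0$, and then invokes \cite[Lemma 2]{WW} to produce $\alpha$ with $\bE(L^\alpha)<1$, which serves directly as the contraction rate for $d^\alpha$. That argument is shorter and gives the clean rate $\bE(L^\alpha)$, but it funnels the hypothesis through the Diaconis--Freedman-type condition $\bE(\log L)<0$, which the paper itself notes is stronger than $\log$-CA; in general one only has $\sup_{x\ne y}\bE\bigl(\log(Z^{x,y}_{1,d}/d(x,y))\bigr)\le\bE(\log L)$, since the suprema defining the individual $\Lip(f_i)$ need not be attained at a common pair $(x,y)$, so the equality displayed in the paper's proof requires extra justification. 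Your pointwise estimate on the ratios $r_j(x,y)$, with the threshold $\delta$ isolating a strongly contracted index of weight at least $p_{\min}$ against the worst-case expansion $\log L$ of the others, uses only the $\log$-CA inequality at each fixed pair plus the uniform bound $r_j\le L$; it is self-contained (no appeal to the Wu--Woodroofe lemma) and works verbatim even when $\bE(\log L)\ge0$, at the price of a less explicit contraction rate.
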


 \begin{proof}
Consider 
\begin{equation}\label{Lconst:Lip}
     	L(\xi)=L(\xi_1)
	\eqdef\sup_{x,y\in M\colon x\neq y }
	\frac{d(f_{\xi_1}(x),f_{\xi_1}(y)}{d(x,y)},
\end{equation}
which is a Lipschitz constant for $f_{\xi_1}$. As, by hypothesis, $(\cF,\bp,d)$ is $\log$-CA with some contraction rate $\lambda\in(0,1)$, it holds 
\[
	\sup_{x\ne y}\bE\left(\log\frac{Z_{1,d}^{x,y}}{d(x,y)}\right)
	= \bE(\log L)
	\le \log\lambda
	<0. 
\]
Hence, by \cite[Lemma 2]{WW}, there exists $\alpha\in(0,1]$ such that $\bE (L^{\alpha})\in(0,1)$. Note that $d^{\alpha}$ is a metric on $M$ and $L^{\alpha}(\xi)$ is a Lipschitz constant for $f_{\xi_1}$ relative to $(M,d^{\alpha})$. It follows that for all $x,y\in M$ 
\[    \bE (d^{\alpha}(X_1^x,X_1^y))\leq \bE (L^{\alpha}) d^{\alpha}(x,y),
\]
and hence $(\cF, \bp, d^{\alpha})$ is CA with contraction rate $\bE (L^{\alpha})\in(0,1)$.
\end{proof}

\begin{remark}
It is common in the literature to assume a log-CA condition instead of CA, see for example \cite{sten}. In the present setting, by Lemma \ref{logca1} and  Proposition \ref{prologca2}, these conditions are equivalent (changing $d$ by $d^{\alpha}$ if necessary, for some $\alpha\in(0,1]$).
\end{remark}

%---------------------------------------------------------------------------
\subsection{Local average contraction conditions: LECA and ESCA}\label{sec:23}
%---------------------------------------------------------------------------

In this section, we discuss several types of local average contraction conditions for IFSs. In particular, we introduce LECA and ESCA, which are, besides NEA, key properties towards Theorem \ref{pro:1}. Such conditions have been studied, for example, in \cite{TK} and later, independently, in \cite{SteDav,SteDav2} and  \cite{Carlsson}. See also  \cite{JarnerTweedie,LagSten}. Here our focus is on conditions which are sufficient or necessary for CA, possibly after some change of metric.

The first property is a generalization of CA. We say that $(\cF,\bp,d)$ is \emph{locally eventually contractive on average (LECA)} if  
\begin{equation}\label{ip}
   \text{for every $x,y\in  M$ there exist $\ell\geq 1$ and $\lambda\in(0,1)$:}\quad
   \bE (Z_{\ell,d}^{x,y})\leq \lambda d(x,y).  
\end{equation}

\begin{remark}
If $(\cF,\bp,d)$ is LECA then for any $\alpha\in(0,1)$ the triple $(\cF,\bp,d^{\alpha})$ is LECA.
\end{remark}

\begin{lemma}\label{LECAimpl}
	CA implies LECA.
	SA implies LECA.
\end{lemma}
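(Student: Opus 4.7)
The proof is a direct unpacking of definitions with no real obstacle; I will treat the two implications separately.

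For the first implication, assume $(\cF,\bp,d)$ is CA with contraction rate $\lambda \in (0,1)$. Then by definition $\bE(Z_{1,d}^{x,y}) \le \lambda d(x,y)$ for all $x,y\in M$. Choosing $\ell=1$ and the same $\lambda$ immediately verifies condition \eqref{ip}, so LECA holds.

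For the second implication, assume $(\cF,\bp,d)$ is SA, i.e., $\lim_{n\to\infty} \bE(Z_{n,d}^{x,y}) = 0$ for every $x,y\in M$. Fix $x,y\in M$. If $x=y$, then $Z_{\ell,d}^{x,y}\equiv 0$ for every $\ell$, so \eqref{ip} holds trivially with, say, $\ell=1$ and any $\lambda\in(0,1)$. If $x\ne y$, then $d(x,y)>0$, and by the definition of SA we may choose $\ell=\ell(x,y)\in\bN$ large enough so that
\[
	\bE(Z_{\ell,d}^{x,y}) \le \tfrac{1}{2}\,d(x,y).
\]
Taking $\lambda=1/2$ then verifies \eqref{ip} for this pair. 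Since $x,y\in M$ were arbitrary, $(\cF,\bp,d)$ is LECA.

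The main thing to emphasize is that the LECA definition allows $\ell$ and $\lambda$ to depend on the pair $(x,y)$, which is what makes the SA-based argument work without any uniformity in the rate of convergence of $\bE(Z_{n,d}^{x,y})$ to $0$. No compactness or additional hypotheses are needed.
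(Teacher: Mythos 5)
Your proof is correct and is exactly the routine unpacking of the definitions that the paper has in mind (it states the lemma as an immediate consequence and gives no written proof): $\ell=1$ handles CA, and for SA the pointwise convergence $\bE(Z_{n,d}^{x,y})\to 0$ yields, for $x\neq y$, some $\ell$ with $\bE(Z_{\ell,d}^{x,y})\le \tfrac12 d(x,y)$, the diagonal case being trivial. Your emphasis that $\ell$ and $\lambda$ may depend on the pair $(x,y)$, so no uniformity or compactness is needed, is precisely the right observation.
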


Lemma \ref{LECAimpl} is an immediate consequence. It shows that synchronization (on average) is intimately related with contraction (on average). The next result indeed proves that, assuming NEA, those properties are equivalent. For its proof we need to recall some more concepts.  The set $\Sigma_{N}^{+}$ is naturally equipped with the product topology on $\{0,\ldots, N-1\}^{\mathbb{N}}$, where $\{0,\ldots, N-1\}$ is given the discrete topology. A basis is given by the family of \emph{cylinders}
\[
	[i;\xi_1,\ldots,\xi_n]
	\eqdef \{\eta\in\Sigma_{N}^{+}\colon \eta_{i+1}=\xi_1,\ldots,\eta_{i+n}=\xi_n\}.
\]	
We simply write $[\xi_1,\ldots,\xi_n]\eqdef[1;\xi_1,\ldots,\xi_n]$.	
Every cylinder is clopen. Every open set in $\Sigma_{N}^{+}$ is a countable union of cylinders. 

\begin{lemma}\label{lem:2}
Suppose that $(\cF, \bp,d)$ is  NEA on some compact metric space $(M,d)$. Then, $(\cF, \bp,d)$ is SA if and only if it is LECA.
\end{lemma}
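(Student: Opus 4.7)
First I would handle the easy direction SA $\Rightarrow$ LECA, which requires neither compactness nor NEA: given $x\ne y$, SA yields $\bE(Z_{n,d}^{x,y})\to 0$, so some $\ell$ gives $\bE(Z_{\ell,d}^{x,y})\le d(x,y)/2$, realising LECA with $\lambda=1/2$ (the case $x=y$ is trivial).

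For the reverse implication, the plan is to study the pointwise limit $h(x,y)\eqdef\lim_{n\to\infty}\psi_n(x,y)$ of the pseudometrics $\psi_n(x,y)=\bE(Z_{n,d}^{x,y})$. By Lemma~\ref{lemaABC}(2) this limit exists under NEA, and since each $\psi_n$ is continuous and the sequence is nonincreasing, $h$ is upper semi-continuous on the compact space $M\times M$ and satisfies $h\le d$. Since SA is equivalent to $h\equiv 0$, I would argue by contradiction: assume $H\eqdef\sup h>0$. Upper semi-continuity on a compact set yields $(x^*,y^*)$ attaining $H$, and necessarily $x^*\ne y^*$.

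The crucial step is to isolate a compact forward-invariant set on which $h$ is constantly $H$. Passing to the limit in the recursion of Lemma~\ref{lemaABC1}, $\psi_{n+1}(x,y)=\sum_i p_i\psi_n(f_i(x),f_i(y))$, yields $h(x,y)=\sum_i p_i h(f_i(x),f_i(y))$. Evaluating at $(x^*,y^*)$ gives $H=\sum_i p_i h(f_i(x^*),f_i(y^*))$, and since $\bp$ is non-degenerate and $h\le H$, equality forces $h(f_i(x^*),f_i(y^*))=H$ for every $i$. Iterating, $h\equiv H$ on the forward orbit $O\eqdef\{(f^n_\xi(x^*),f^n_\xi(y^*)):n\ge 0,\,|\xi|=n\}$, and upper semi-continuity of $h$ propagates this equality to its compact closure $K^*\eqdef\overline{O}$. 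Continuity of each $f_i$ ensures $K^*$ is forward invariant under the diagonal action $(u,v)\mapsto(f_i(u),f_i(v))$.

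The conclusion comes from contradicting LECA on $K^*$: by continuity of $d$ and compactness of $K^*$, there is $(u_0,v_0)\in K^*$ attaining $D_0\eqdef\min_{(u,v)\in K^*}d(u,v)$; since $D_0\ge H>0$ we have $u_0\ne v_0$, so LECA provides $\ell\ge 1$ and $\lambda\in(0,1)$ with $\psi_\ell(u_0,v_0)\le\lambda D_0$. On the other hand, forward invariance of $K^*$ gives $(f^\ell_\xi(u_0),f^\ell_\xi(v_0))\in K^*$ for every $\xi$ with $|\xi|=\ell$, so $d(f^\ell_\xi(u_0),f^\ell_\xi(v_0))\ge D_0$ and hence $\psi_\ell(u_0,v_0)=\sum_\xi p_\xi d(f^\ell_\xi(u_0),f^\ell_\xi(v_0))\ge D_0$, contradicting $\lambda<1$ and $D_0>0$. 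The main conceptual hurdle is picking the right object for the contradiction: not merely a maximiser of $h$, but the whole compact invariant set $K^*$ on which $h$ is constantly $H$, so that LECA applied at a minimum-$d$ point of $K^*$ is simultaneously forced to be a non-strict inequality by forward invariance.
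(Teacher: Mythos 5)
Your proof is correct, but it takes a genuinely different route from the paper's. You work with the limit function $h=\lim_n\psi_n=\inf_n\psi_n$ (which exists and is upper semi-continuous thanks to NEA and the monotonicity of Lemma \ref{lemaABC}(2)), observe that it satisfies the stationarity identity $h(x,y)=\sum_i p_i\,h(f_i(x),f_i(y))$, and then run a maximum-principle argument: a maximizer of $h$ exists by upper semi-continuity and compactness, the identity together with non-degeneracy of $\bp$ propagates the maximal value $H$ along the forward orbit of the maximizing pair, and upper semi-continuity pushes it to the compact, forward-invariant closure $K^*$; applying LECA at a $d$-minimizing pair of $K^*$ and using forward invariance ($\psi_\ell(u_0,v_0)\ge D_0$ versus $\psi_\ell(u_0,v_0)\le\lambda D_0$ with $D_0\ge H>0$) gives the contradiction. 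The paper instead fixes a single non-synchronizing pair $(x,y)$ with $\delta=\lim_n\psi_n(x,y)>0$, uses continuity of the $\psi_n$, LECA and a finite subcover of the compact set $B_\delta=\{d\ge\delta/2\}$ to produce a \emph{uniform} $N$ and $\lambda$ with $\bE(Z_{N,d}^{z,w})\le\lambda d(z,w)$ on all of $B_\delta$, and then runs a probabilistic induction along the random orbit, splitting according to whether the orbit pair currently lies in $B_\delta$ (contract by $\lambda$) or not (distance already below $\delta/2$, controlled via NEA), concluding $\lim_k\bE(Z_{kN,d}^{x,y})\le\delta/2<\delta$. Each approach has its merits: the paper's finite-subcover step is the same mechanism that later upgrades LECA+ESCA to a uniform $k$-ECA in Theorem \ref{pro:1}, so its proof of Lemma \ref{lem:2} previews that quantitative machinery; your argument is softer and shorter, avoids the cylinder-set and indicator bookkeeping entirely, and isolates cleanly where compactness and NEA enter (existence and semi-continuity of $h$, attainment of extrema, invariance of $K^*$). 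Both directions of your easy implication (SA $\Rightarrow$ LECA, with $\lambda=1/2$) match the paper's Lemma \ref{LECAimpl}.
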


\begin{proof}
By Lemma \ref{LECAimpl}, SA implies LECA. To prove the reverse implication, let us assume that $(\cF, \bp,d)$ is LECA. By Lemma \ref{lemaABC} (2), for every $x,y\in M$ the limit 
\[
   	\delta(x,y)
	\eqdef \lim_{n\to\infty}\psi_n(x,y)
	\ge0,
	\quad\text{ where }\quad
	\psi_n(x,y) \eqdef\bE (Z_{n,d}^{x,y}),
\]
exists. Arguing by contradiction, let us suppose that $(\cF, \bp,d)$ is not SA and hence there exist $x,y\in M$ such that $\delta=\delta(x,y)>0$. Define 
\[
	B_{\delta}\eqdef \left\{(z,w)\colon d(z,w)\geq\frac{\delta}{2}\right\},
\]	
which is a closed subset of $ M \times M $ and hence compact. Note that $d$ and $\psi_n $ are continuous on $M\times M$. Thus, for $n\geq 1$ and $t\in(0,1)$ the set
\[
	C_{n,t}
	\eqdef \{(z,w)\colon \bE (Z_{n,d}^{z,w})<t d(z,w)\}
\]	
is open in $M\times M$ and so 
\[
	A_{n,t}\eqdef B_{\delta}\cap C_{n,t}
\]	 
is open in $B_{\delta}$. As we assume LECA, for each $(z,w)\in B_{\delta}$ there exist $\ell\geq 1$ and $\lambda'\in(0,1)$ such that $\bE (Z_{\ell,d}^{z,w})\leq \lambda' d(z,w)$ and hence, $(z,w)\in A_{\ell,t}$ for every $t\in(\lambda',1)$. In particular, it holds
\[
	B_{\delta}
	=\bigcup_{t\in(0,1)}\bigcup_{n\in\mathbb{N}}A_{n,t}.
\]	 
By compactness of $B_{\delta}$, there exist $n_1,\ldots,n_k\in\bN$ and $t_1,\ldots,t_k\in(0,1) $ such that
\[
	B_\delta = A_{n_1, t_1} \cup \cdots \cup A_{n_k, t_k}.
\]	
Define
\[
	N \eqdef  \max \{n_1,\ldots, n_k\}
	\quad\text{ and }\quad
	\lambda \eqdef \max \{t_1, \ldots, t_k\}.
\]	
If $(z, w)$ in $B_\delta$, then $(z, w)\in A_{n_i, t_i}$ for some $i\in\{1,\ldots,k\}$. This together with Lemma \ref{lemaABC} (2) implies that
\begin{equation}\label{alfa}
	\bE (Z_{N,d}^{z,w})
	\le \bE  (Z_{n_i,d}^{z, w}) <t_i d (z, w) 
	\leq \lambda d (z, w).
\end{equation}
Let
\[
	\Gamma_n
	\eqdef \left\{\xi\colon (f_{\xi}^n(x),f_{\xi}^n(y))\in B_{\delta}\right\}.
\]	
Note that $\Gamma_n$ is the union of cylinder sets. Indeed, if $\xi\in\Gamma_n$ then every $\eta\in[\xi_1,\ldots,\xi_n]\in\Gamma_n$.
For $n\geq 0$, we have
\begin{align}\label{eq0lem2}
    \bE (Z_{N+n,d}^{x,y})&=\bE \left(Z_{N+n,d}^{x,y}\mathbbm{1}_{\Gamma_n}\right)+\bE \left(Z_{N+n,d}^{x,y}\mathbbm{1}_{\Gamma_n^c}\right).
\end{align}
For the first term, we observe
\begin{align*}
    \bE &\left(Z_{N+n,d}^{x,y}\mathbbm{1}_{\Gamma_n}\right)\\
    &=\sum_{\xi_{n+1},\ldots, \xi_{N+n}}
    	\sum_{[\xi_1\ldots\xi_n]\subset\Gamma_n}
	p_{\xi_{n+1}\ldots\xi_{N+n}}
	p_{\xi_1\ldots\xi_n}
	d(f_{\xi_{1}\ldots\xi_{N+n}}(x),f_{\xi_1\ldots\xi_{N+n}}(y))\\
    &=\sum_{[\xi_1,\ldots,\xi_n]\subset\Gamma_n}p_{\xi_1\ldots\xi_n}
    	\sum_{\xi_{n+1},\ldots, \xi_{N+n}}
	p_{\xi_{n+1}\ldots\xi_{N+n}}
	d(f_{\xi_{1}\ldots\xi_{N+n}}(x),f_{\xi_{n+1}\ldots\xi_{N+n}}(y))\\
    &=\sum_{[\xi_1,\ldots,\xi_n]\subset\Gamma_n}
    	p_{\xi_1\ldots\xi_n}\bE \left(Z_{N,d}^{f_{\xi_1\ldots\xi_n}(x),f_{\xi_1\ldots\xi_n}(y)} \right).
\end{align*}
By definition of $\Gamma_n$ it holds $(f_{\xi_1\ldots\xi_n}(x),f_{\xi_1\ldots\xi_n}(y))\in B_{\delta}$, it follows from \eqref{alfa} that
\begin{align}\label{eq1lem2}\begin{split}
    	\bE \left(Z_{N+n,d}^{x,y}\mathbbm{1}_{\Gamma_n}\right)
    	&\leq\sum_{[\xi_1,\ldots,\xi_n]\subset\Gamma_n}p_{\xi_1\ldots\xi_n} 
		\lambda \,d\left(f_{\xi_1\ldots\xi_n}(x),f_{\xi_1\ldots\xi_n}(y) \right)\\
	&= \lambda\, \bE (Z_{n,d}^{x,y}\mathbbm{1}_{\Gamma_n}).
\end{split}\end{align}
For the second term, we have
\begin{align*}
    	\bE \left(Z_{N+n,d}^{x,y}\mathbbm{1}_{\Gamma^c_n}\right)
  	=\sum_{[\xi_1,\ldots,\xi_n]\subset\Gamma_n^c}
		p_{\xi_1\ldots\xi_n}
		\bE \left(Z_{N,d}^{f_{\xi_1\ldots\xi_n}(x),f_{\xi_1\ldots\xi_n}(y)} \right).
\end{align*}
Since $(\cF, \bp,d)$ is  NEA, again using Lemma \ref{lemaABC} (2), we get
\begin{align*}
    	\bE \left(Z_{N+n,d}^{x,y}\mathbbm{1}_{\Gamma^c_n}\right)
  	\leq \sum_{[\xi_1,\ldots,\xi_n]\subset\Gamma_n^c}
		p_{\xi_1\ldots\xi_n}
			d\left(f_{\xi_1\ldots\xi_n}(x),f_{\xi_1\ldots\xi_n}(y) \right).
\end{align*}
By definition of $\Gamma_n$ we have that $(f_{\xi_1\ldots\xi_n}(x),f_{\xi_1\ldots\xi_n}(y))\notin B_{\delta}$  so that
\begin{align}\label{eq2lem2}
    	\bE \left(Z_{N+n,d}^{x,y}\mathbbm{1}_{\Gamma^c_n}\right)
  	< \frac{\delta}{2}\mu(\Gamma_n^c).
\end{align}
From \eqref{eq0lem2}, \eqref{eq1lem2} and \eqref{eq2lem2} we get
\[
	 \bE (Z_{N+n,d}^{x,y})
	 < \lambda \,\bE \left(Z_{n,d}^{x,y}\mathbbm{1}_{\Gamma_n}\right)
	 	+\frac{\delta}{2}\mu\left( \Gamma_n^c\right).
\]		
Moreover, by induction on $k\in\bN$, it follows
\[\begin{split}
    \bE (Z_{kN,d}^{x,y})
    &=\bE \left(Z_{kN,d}^{x,y}\mathbbm{1}_{\Gamma_{(k-1)N}}\right)
    +\bE \left(Z_{kN,d}^{x,y}\mathbbm{1}_{\Gamma_{(k-1)N}^c}\right)\\
    &\leq  \lambda\, \bE \left(Z_{(k-1)N,d}^{x,y}\mathbbm{1}_{\Gamma_{(k-1)N}}\right)
    +\frac{\delta}{2}\mu\left( \Gamma_{(k-1)N}^c\right)\\
    &\leq  \lambda^2\, \bE \left(Z_{(k-2)N,d}^{x,y}\mathbbm{1}_{\Gamma_{(k-1)N}\cap\Gamma_{(k-2)N}}\right)+\frac{\delta}{2}\mu\left(\Gamma_{(k-1)N}^c\cup\Gamma^c_{(k-2)N}\right)\\
    \le\ldots
    &\leq  \lambda^k \,\bE \left(Z_{0,d}^{x,y}\mathbbm{1}_{\bigcap_{j=0}^{k-1}\Gamma_{jN}}\right)
    	+\frac{\delta}{2}\mu\Big(\bigcup_{j=0}^{k-1}\Gamma_{jN}^c\Big)\\
    &=\lambda^kd(x,y)\mu\Big(\bigcap_{j=0}^{k-1}\Gamma_{jN}\Big)
    	+\frac{\delta}{2}\mu\Big(\bigcup_{j=0}^{k-1}\Gamma_{jN}^c\Big).
\end{split}\]
Hence, recalling that $ \lambda\in(0,1)$ and using $\mu(\cdot)\le 1$, we get
\[
	\lim_{k\to\infty}\bE (Z_{kN,d}^{x,y})\leq \frac{\delta}{2},
\]	
which is a contradiction. 
This implies SA.
\end{proof}

The following is a consequence of Lemmas \ref{lem:2} and \ref{lemrem:SA}.

\begin{corollary}
Assume $(M,d)$ is compact.  Assume that $d$ and  $D$ are  equivalent metrics such that $(\cF, \bp,d)$ and $(\cF, \bp,D)$ are NEA. Then $(\cF, \bp,d)$ is LECA if and only if $(\cF, \bp,D)$ is LECA.
\end{corollary}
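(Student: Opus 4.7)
The plan is to chain together the two lemmas in a short symmetric argument, exploiting the fact that compactness of $(M,d)$ (and hence of $(M,D)$, by topological equivalence) implies both spaces are bounded, so the hypotheses of Lemma \ref{lemrem:SA} apply.

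First, I would prove the forward direction. Assume $(\cF,\bp,d)$ is LECA. Since $(\cF,\bp,d)$ is NEA on the compact metric space $(M,d)$, Lemma \ref{lem:2} applies and gives that $(\cF,\bp,d)$ is SA. Now $(M,d)$ being compact is in particular bounded, so Lemma \ref{lemrem:SA} applies with the equivalent metric $D$: since $(\cF,\bp,D)$ is NEA by hypothesis, we conclude that $(\cF,\bp,D)$ is SA. Finally, since $(M,D)$ is compact (topological equivalence preserves compactness) and $(\cF,\bp,D)$ is NEA, Lemma \ref{lem:2} applied in the other direction yields that $(\cF,\bp,D)$ is LECA.

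The reverse direction is identical after interchanging the roles of $d$ and $D$, which is legitimate because both metrics are assumed to satisfy NEA and to be topologically equivalent (equivalence is symmetric).

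There is no real obstacle here beyond bookkeeping: the only small point to verify is that compactness passes from $(M,d)$ to $(M,D)$, but this is immediate since they generate the same topology. No additional estimates or constructions are needed.
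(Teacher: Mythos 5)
Your argument is correct and is exactly the route the paper intends: the corollary is stated there as a direct consequence of Lemma \ref{lem:2} and Lemma \ref{lemrem:SA}, chained in precisely the way you describe (LECA $\Rightarrow$ SA for $d$, transfer of SA to $D$ via NEA and boundedness, then SA $\Rightarrow$ LECA for $D$, with the symmetric argument for the converse). Your added remarks that compactness gives boundedness and passes to $(M,D)$ by topological equivalence are the only bookkeeping points needed, and you handle them correctly.
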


The following is a generalization of the definition of locally contractive with respect to the reverse system \cite[Definition 5]{SteDav} and of the definition $\varepsilon$-local (average) contractive \cite[Definition 1]{sten}. We say that $(\cF,\bp,d)$ is \emph{eventually strongly contracting on average (ESCA)} if for every  $x\in  M$ there exist $\ell\ge 1$ and an open neighborhood $V_{(x,x)}\subset M\times M$ of $(x,x)$ such that 
\begin{equation}\label{iip}
    \sup_{(y,z)\in V_{(x,x)}, y\neq z}
    \frac{\bE (Z_{\ell,d}^{z,y})}{ d(z,y)}<1.
\end{equation}

\begin{lemma}
If $(\cF,\bp,d)$ is ESCA then  for any $\alpha\in(0,1)$ the triple $(\cF,\bp,d^{\alpha})$ is ESCA.
\end{lemma}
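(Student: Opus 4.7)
The plan is to reduce the statement to a direct application of Jensen's inequality, since raising the metric to a power $\alpha \in (0,1)$ corresponds to composing the random variable $Z_{\ell,d}^{z,y}$ with the concave function $t \mapsto t^\alpha$.

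First I would unpack the notation: for any $\xi \in \Sigma^+_N$ one has
\[
Z_{\ell,d^\alpha}^{z,y}(\xi) = d^\alpha(f_\xi^\ell(z), f_\xi^\ell(y)) = \bigl(d(f_\xi^\ell(z), f_\xi^\ell(y))\bigr)^\alpha = \bigl(Z_{\ell,d}^{z,y}(\xi)\bigr)^\alpha.
\]
Then I would fix $x \in M$ and apply the ESCA hypothesis for $(\cF,\bp,d)$ to obtain $\ell \geq 1$, an open neighborhood $V_{(x,x)} \subset M \times M$ of $(x,x)$, and a constant $\lambda \in (0,1)$ such that for every $(y,z) \in V_{(x,x)}$ with $y \neq z$,
\[
\frac{\bE(Z_{\ell,d}^{z,y})}{d(z,y)} \leq \lambda < 1.
\]

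Next, I would apply Jensen's inequality to the concave function $\varphi(t) = t^\alpha$ (recalling $\alpha \in (0,1)$): for every such pair $(y,z)$,
\[
\bE\bigl((Z_{\ell,d}^{z,y})^\alpha\bigr) \leq \bigl(\bE(Z_{\ell,d}^{z,y})\bigr)^\alpha.
\]
Dividing by $d^\alpha(z,y) = d(z,y)^\alpha$ and using the ESCA bound above,
\[
\frac{\bE(Z_{\ell,d^\alpha}^{z,y})}{d^\alpha(z,y)} = \frac{\bE\bigl((Z_{\ell,d}^{z,y})^\alpha\bigr)}{d(z,y)^\alpha} \leq \left(\frac{\bE(Z_{\ell,d}^{z,y})}{d(z,y)}\right)^\alpha \leq \lambda^\alpha.
\]
Since $\lambda \in (0,1)$ and $\alpha \in (0,1)$, we have $\lambda^\alpha < 1$, so taking the supremum over $(y,z) \in V_{(x,x)}$ with $y \neq z$ yields the ESCA condition \eqref{iip} for $(\cF,\bp,d^\alpha)$ with the same $\ell$ and same neighborhood $V_{(x,x)}$.

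There is essentially no obstacle here: the argument is a one-line application of Jensen's inequality combined with the fact that the ESCA condition is a pointwise supremum bound over a neighborhood. The only thing to verify is that the same $\ell$ and the same neighborhood work, which is immediate from the computation.
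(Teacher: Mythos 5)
Your argument is correct and is exactly the intended one: the paper states this lemma without proof, and (as with the analogous NEA statement, which the paper attributes to Jensen's inequality) the point is precisely that $Z_{\ell,d^\alpha}^{z,y}=(Z_{\ell,d}^{z,y})^\alpha$ together with concavity of $t\mapsto t^\alpha$ gives $\bE\big((Z_{\ell,d}^{z,y})^\alpha\big)\le\big(\bE(Z_{\ell,d}^{z,y})\big)^\alpha$, so the same $\ell$ and the same neighborhood $V_{(x,x)}$ (open for $d^\alpha$ as well, since $d$ and $d^\alpha$ are equivalent) work with bound $\lambda^\alpha<1$. No gaps.
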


\begin{remark} 
In Section \ref{sec:exe2}, the example $(\cF,\bp,d^{\alpha})$ given is ESCA with parameter $\ell\ne1$. Moreover, $(\cF,\bp,d^{\alpha})$ is LECA, but fails to be NEA.
\end{remark}

\begin{lemma}\label{Lem:1}
If $(\cF,\bp,d)$ is CA, then for every $\alpha\in(0,1]$ and a metric $D$ which is strongly equivalent to $d^{\alpha}$, for every $k\in\bN$ large enough the triple $(\cF,\bp,D)$ is $k$-ECA.
\end{lemma}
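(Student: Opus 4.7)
The plan is to use the CA hypothesis together with its iterated form in Lemma \ref{lemalambda}, and then transfer everything to the new metric $D$ via the strong equivalence $a\, d^{\alpha} \le D \le b\, d^{\alpha}$ (with $a,b>0$) combined with Jensen's inequality for the concave function $t\mapsto t^{\alpha}$.

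First I would unpack what needs to be shown: for every $x,y\in M$ and every $k\in\bN$,
\[
\bE\bigl(Z_{k,D}^{x,y}\bigr)
= \bE\bigl(D(f_\xi^k(x),f_\xi^k(y))\bigr)
\le b\,\bE\Bigl(d^{\alpha}(f_\xi^k(x),f_\xi^k(y))\Bigr)
= b\,\bE\bigl((Z_{k,d}^{x,y})^{\alpha}\bigr).
\]
Since $\alpha\in(0,1]$, the function $t\mapsto t^{\alpha}$ is concave on $[0,\infty)$, so Jensen's inequality gives $\bE\bigl((Z_{k,d}^{x,y})^{\alpha}\bigr)\le \bigl(\bE(Z_{k,d}^{x,y})\bigr)^{\alpha}$.

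Next, I would invoke Lemma \ref{lemalambda} with the CA rate $\lambda\in(0,1)$ of $(\cF,\bp,d)$ to obtain $\bE(Z_{k,d}^{x,y})\le \lambda^k d(x,y)$, and then use the lower bound of the strong equivalence $d^{\alpha}(x,y)\le a^{-1}D(x,y)$ to conclude
\[
\bE\bigl(Z_{k,D}^{x,y}\bigr)
\le b\,\lambda^{k\alpha}d^{\alpha}(x,y)
\le \frac{b}{a}\,\lambda^{k\alpha}\,D(x,y).
\]
Finally, I would choose $k\in\bN$ large enough so that $\frac{b}{a}\lambda^{k\alpha}<1$ (this is possible since $\lambda^{\alpha}\in(0,1)$, explicitly any $k>\alpha^{-1}\log(a/b)/\log\lambda$ works when $a<b$, and any $k\ge 1$ works when $a\ge b$), and conclude $(\cF,\bp,D)$ is $k$-ECA with contraction rate $\frac{b}{a}\lambda^{k\alpha}$.

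There is no real obstacle here; the only point requiring a little care is the direction of Jensen's inequality, which is why the hypothesis $\alpha\le1$ is used in an essential way (for $\alpha>1$ the inequality would go the wrong way and the argument would fail). The strong equivalence between $D$ and $d^{\alpha}$ — as opposed to mere topological equivalence — is used exactly to absorb the constants $a,b$ into a single multiplicative factor that becomes harmless once $k$ is taken large enough relative to the built-in contraction $\lambda^{\alpha}$.
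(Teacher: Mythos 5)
Your proof is correct and follows essentially the same route as the paper: pass from $d$ to $d^{\alpha}$ via Jensen's inequality (the paper does this at the first iterate to get that $(\cF,\bp,d^{\alpha})$ is CA with rate $\lambda^{\alpha}$, you do it at the $k$-th iterate, which is equivalent), then transfer to $D$ by strong equivalence and choose $k$ so large that $\tfrac{b}{a}\lambda^{\alpha k}<1$, arriving at the same contraction rate. No gaps; the remark on the direction of Jensen and on why strong (not merely topological) equivalence is needed matches the paper's use of these hypotheses.
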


\begin{proof}
Suppose $(\cF,\bp,d)$ be CA with contracting rate $\lambda\in(0,1)$. Given $\alpha\in(0,1]$ and a metric $D$  on $M$ strongly  equivalent  to $d^\alpha$, take $b>a>0$ and  such that $aD\le d^{\alpha}\le bD$. Note that $(\cF,\bp,d^{\alpha})$ is CA with contracting rate $\lambda^{\alpha}\in(0,1)$. Take $k\in\mathbb{N}$ so that 
\[
	\frac{b}{a}\lambda^{\alpha k}<1.
\]	
First, strong equivalence implies that 
\[
	a \bE (Z_{k,D}^{x,y})
	\le \bE (Z_{k,d^{\alpha}}^{x,y})
                \quad\text{for every $x,y\in M$}.	
\]
For every $x,y\in M$, $x\neq y$, CA with contraction rate $\lambda^{\alpha}$ together with the above implies 
\[
	\lambda^{\alpha k}
	\geq \frac{\bE (Z_{k,d^{\alpha}}^{x,y})}{d^{\alpha}(x,y)}
	=\frac{\bE (Z_{k,d^{\alpha}}^{x,y})}{\bE (Z_{k,D}^{x,y})}\cdot
		\frac{\bE (Z_{k,D}^{x,y})}{D(x,y)}\cdot
		\frac{D(x,y)}{d^{\alpha}(x,y)}
	\ge a\cdot	\frac{\bE (Z_{k,D}^{x,y})}{D(x,y)} \cdot \frac1b.
\]	
Hence, it follows
\[
		\bE (Z_{k,D}^{x,y})
	\le \frac ba\lambda^{\alpha k}D(z,y),
\]		
which proves that $(\mathcal{F},\bp,D)$ is $k$-ECA with contraction rate $\frac ba\lambda^{\alpha k}\in(0,1)$.
\end{proof}

%---------------------------------------------------------------------------
\subsection{Further contraction conditions}
%---------------------------------------------------------------------------
In this section, we continue our discussion of contraction conditions and put them into the context of the ones defined above. Although, none of the concepts defined in this section will be implemented in the remainder of this paper.
Let us define for every $x\in M$ the sequence of random variables $\left(X_n^x\right)_{n\geq 0}$ on $(\Sigma^+_N,\mu)$ by
\[%\begin{align}\label{cont:2}
    X_n^x(\xi):=f_{\xi_n}\circ f_{\xi_{n-1}}\circ\cdots\circ f_{\xi_1}(x); \quad X_0^x(\xi):=x.
\]%\end{align}
For $A\subset M$ measurable, denote by $\tau_A(x)\eqdef \inf\{n\geq 1\colon X_n^x\in A\}$ the first time the process $X_n^x$ hits the set $A$. Following \cite{JarnerTweedie}, $(\cF,\bp,d)$ satisfies the \emph{local contraction property relative to $A\subset M$} if there is $\lambda\in(0,1)$ such that
\begin{equation}\label{cont:3}
    \bE \Big(d\big(X^x_{\tau_A(x)\vee\tau_A(y))},
    			X^y_{\tau_A(x)\vee\tau_A(y))}\big)  \Big)
    \leq \lambda d(x,y)
                \quad\text{for every $x,y\in M$},
\end{equation}
that is, ``there is some contraction after the set $A\subset M$ is reached''. In other terms, this condition states that if we start two chains, at $x$ and $y$, respectively, and run them simultaneously using the same maps, then at the time both of them have visited $A$, in average they will be closer to each other by a factor $\lambda$. 
\begin{remark}
In \cite[Section 3]{JarnerTweedie}, it is shown that $\tau_A(x)\vee\tau_A(y)<\infty$ almost surely, assuming that there exists a  function
 $V\colon M\to [1,\infty)$, satisfying $\sup_{x\in A} V(x)<\infty$, and constants $r\in(0,1)$ and $b<\infty$ such that for every $x\in M$
\begin{align}\label{cont:4}
  \bE  V\left(X_1^x\right )
   \leq r V(x)+b\mathbbm{1}_A(x).
\end{align}
 Furthermore, in \cite{JarnerTweedie} the function $V$ is used to control the behavior outside of $A$. Moreover, assuming that $(\cF, \bp, d)$ is NEA and $M$ is complete separable metric space with bounded metric $d$, thus guaranteeing the existence of a unique stationary probability measure (see \cite[Theorem 2.1]{JarnerTweedie}). 
 \end{remark}
 
 \begin{lemma}
 	Assume that $(\cF,\bp,d)$ satisfies the local contraction property relative to $A=M$, then $(\cF,\bp,d)$ is CA and \eqref{cont:4} is satisfied for $V\equiv 1$.
\end{lemma}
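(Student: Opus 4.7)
The plan is to simply observe that when $A = M$ the hitting times collapse to $1$, so that the local contraction property relative to $A=M$ is literally the definition of CA, and the drift inequality \eqref{cont:4} reduces to a trivial arithmetic inequality.

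First, I would unpack the definition of $\tau_A(x) = \inf\{n \geq 1 \colon X_n^x \in A\}$ in the case $A = M$. Since $X_n^x \in M$ for every $n \geq 1$ and every $x$, the infimum is attained at $n=1$, so $\tau_M(x) \equiv 1$ (deterministically) for every $x \in M$. Consequently $\tau_M(x) \vee \tau_M(y) = 1$ for every pair $x, y \in M$, and the inequality \eqref{cont:3} reads
\[
    \bE\big(d(X_1^x, X_1^y)\big) = \bE(Z_{1,d}^{x,y}) \leq \lambda d(x,y)
    \quad\text{for every } x,y \in M,
\]
with the same constant $\lambda \in (0,1)$. This is exactly the defining property of CA.

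For the second assertion, take $V \equiv 1$ on $M$. Then $\bE V(X_1^x) = 1$ for every $x \in M$, and since $A = M$ we have $\mathbbm{1}_A(x) = 1$ for every $x$, so the right-hand side of \eqref{cont:4} equals $r + b$. Thus \eqref{cont:4} holds with any choice of $r \in (0,1)$ and $b \geq 1 - r$, e.g.\ $r = 1/2$ and $b = 1$.

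There is essentially no obstacle here; the lemma is a sanity check that the local contraction framework of \cite{JarnerTweedie} subsumes CA as the special case where the ``return set'' is the whole space.
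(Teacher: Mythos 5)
Your proof is correct and is exactly the intended argument: the paper states this lemma without proof precisely because, when $A=M$, the hitting times are identically $1$, so \eqref{cont:3} literally becomes the definition of CA, and \eqref{cont:4} with $V\equiv 1$ reduces to $1\le r+b$. Nothing further is needed.
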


The following example  $(\cF,\bp,d)$ satisfies the local contraction property \eqref{cont:3}, but is not CA. Furthermore, it is NEA and satisfies \eqref{cont:4}. 

\begin{example}[NEA, but not CA] 
Adapting an example in \cite[Section 6]{JarnerTweedie} to our context, let $M=[0,2]$ and $d(x,y)\eqdef\lvert x-y\rvert$.  Let $\bp=(p,1-p)$ for some $p\in(1/2,1)$. Consider the IFS $\cF=\{f_0,f_1\}$ given by
\[
	f_0,f_1\colon M\to M,\quad
	f_0(x)\eqdef
	\begin{cases}
			x-\frac{2}{3}	&x\ge 1,\\
			\frac{x}{3}		&x\le 1,
	\end{cases}
	\mbox{ and } 
	f_1(x)
	\eqdef \min\left\{x+\frac{2}{3},2\right\}.
\]	
It is easy to see that $(\cF,\bp,d)$ is NEA. On the other hand, for $x,y\in[1,4/3]$ it holds
\[
	\bE (Z_{1,d}^{x,y}))
	=\bE (d(X_1^x,X_1^y))=d(x,y)=|x-y|
\]
and hence $(\cF,\bp,d)$ is not CA.

Now let us show that $(\cF,\bp,d)$ satisfies the local contraction property \eqref{cont:3} relative to $A=[0,1]$. Since $f_0$ and $f_1$ are non-decreasing functions we have that for every $x,y\in M$, such that $x<y$, 
$$\tau_A(x)\leq \tau_A(y)$$ 
and if $y\in A$, then $x\in A$. Also, note that for $x,y\in A$ we have that
$$\bE (Z_{1,d}^{x,y})=\left(\frac{p}{3}+(1-p) \right)d(x,y).$$
Therefore, \eqref{cont:3} holds with $\lambda\eqdef \frac{p}{3}+(1-p)<1$. 

On the other hand, consider $f(t)=pe^{-\frac{2}{3}t}+(1-p)e^{\frac{2}{3}t}$. Note that $f(0)=1$ and $f'(0)<0$. Thus, to see that \eqref{cont:4} holds fix $t>0$ such that $f(t)<1$. Let $r\eqdef f(t)\in(0,1)$, $b=e^{\frac{5}{3}t}$ and $V(x)\eqdef e^{tx}$. If $x\in A$, then
$$\bE V(X_1^x)= pe^{t\frac{x}{3}}+(1-p)e^{t\left(x+\frac{2}{3}\right)},$$
and hence
$$\bE V(X_1^x)\leq pe^{\frac{t}{3}}+(1-p)e^{\frac{5}{3}t}\leq e^{\frac{5}{3}t}\leq r +b\leq r V(x)+b\mathbbm{1}_A(x).$$
If $x\in (1,2]=M\setminus A$, then
\[
	\bE V(X_1^x)
	\leq pe^{t\left(x-\frac{2}{3}\right)}+(1-p)e^{t\left(x+\frac{2}{3}\right)}
	=V(x)\left( pe^{-t\frac{2}{3}}+(1-p)e^{\frac{2}{3}t} \right)
	%=r V(x)
	=r V(x)+\mathbbm{1}_A(x).
\]	
This proves \eqref{cont:4}.
\end{example}

The idea of locally contractive Markov chains can be expressed in several ways. Globally contracting on average-type CA and $\log$-CA are convenient because  they can be analysed by many different methods. In \cite{sten}, the following local average contraction conditions is considered. 
Given $\varepsilon>0$, the triple $(\cF,\bp,d)$ is called \emph{$\varepsilon$-local contractive on average ($\varepsilon$-LCA)} if there exists $\lambda\in(0,1)$ such that
\begin{align}\label{sten1}
    \sup_{0<d(x,y)<\varepsilon}\frac{\bE (Z_{1,d}^{x,y})}{d(x,y)}\leq \lambda.
\end{align}
The triple $(\cF,\bp,d)$ is called \emph{$\varepsilon$-local $\log$-CA} if for some $\lambda\in(0,1)$
\[%\begin{align}\label{sten3}
    \sup_{0<d(x,y)<\varepsilon}\bE \left(\log\frac{Z_{1,d}^{x,y}}{d(x,y)}\right)\leq \log\lambda<0.
\]%\end{align}

\begin{lemma}
	$\varepsilon$-LCA implies $\varepsilon$-local $\log$-LCA.
\end{lemma}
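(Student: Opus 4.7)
The plan is to mimic exactly the argument used for Lemma \ref{logca1} (CA implies $\log$-CA), applying Jensen's inequality pointwise in $(x,y)$ and then taking the supremum over the restricted region $\{0<d(x,y)<\varepsilon\}$. The key observation is that the only difference between $\varepsilon$-LCA and CA is the restriction of the supremum to pairs $(x,y)$ with $d(x,y)<\varepsilon$; since Jensen's inequality is applied pointwise in $(x,y)$, the same localization carries over automatically.

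Concretely, first I would fix $x,y\in M$ with $0<d(x,y)<\varepsilon$ and consider the random variable $Z_{1,d}^{x,y}/d(x,y)$ on $(\Sigma_N^+,\mu)$. Since $\log$ is concave on $(0,\infty)$, Jensen's inequality gives
\[
\bE\left(\log\frac{Z_{1,d}^{x,y}}{d(x,y)}\right)
\le \log\bE\left(\frac{Z_{1,d}^{x,y}}{d(x,y)}\right)
=\log\frac{\bE(Z_{1,d}^{x,y})}{d(x,y)}.
\]
By the $\varepsilon$-LCA hypothesis, the fraction on the right is bounded by $\lambda\in(0,1)$, and so the displayed quantity is bounded above by $\log\lambda<0$. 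Taking the supremum over all pairs $(x,y)$ with $0<d(x,y)<\varepsilon$ yields
\[
\sup_{0<d(x,y)<\varepsilon}\bE\left(\log\frac{Z_{1,d}^{x,y}}{d(x,y)}\right)
\le \log\lambda<0,
\]
which is precisely $\varepsilon$-local $\log$-CA with the same contraction constant $\lambda$.

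There is essentially no obstacle here: the statement is a direct application of Jensen's inequality, and the restriction to small distances is preserved trivially because the inequality is established pointwise before taking the supremum. The only minor subtlety worth noting is that $Z_{1,d}^{x,y}/d(x,y)$ is a nonnegative (possibly zero-valued) random variable, so $\log$ may take the value $-\infty$; this causes no issue since $\bE(\log\cdot)$ is then either a finite number or $-\infty$, and in both cases the bound by $\log\lambda<0$ is satisfied.
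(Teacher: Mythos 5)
Your proof is correct and follows essentially the same route as the paper's: an application of Jensen's inequality to $\log\bigl(Z_{1,d}^{x,y}/d(x,y)\bigr)$ combined with monotonicity of the logarithm, with the restriction $0<d(x,y)<\varepsilon$ carried along harmlessly. The only cosmetic difference is that you argue pointwise before taking the supremum, while the paper takes the supremum first; the content is identical.
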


\begin{proof}
By Jensen's inequality,
$$\sup_{0<d(x,y)<\varepsilon}\bE \left(\log\frac{Z_{1,d}^{x,y}}{d(x,y)}\right)\leq \sup_{0<d(x,y)<\varepsilon}\log\bE \left(\frac{Z_{1,d}^{x,y}}{d(x,y)}\right). $$
Since logarithm is an increasing function, we get
$$\sup_{0<d(x,y)<\varepsilon}\bE \left(\log\frac{Z_{1,d}^{x,y}}{d(x,y)}\right)\leq\log\left( \sup_{0<d(x,y)<\varepsilon}\frac{\bE \left(Z_{1,d}^{x,y}\right)}{d(x,y)}\right), $$
which implies the lemma.
\end{proof}
The triple $(\cF,\bp,d)$ is \emph{locally contractive in the weak sense (LCWS)}, if for some $\lambda\in(0,1)$ it holds
\begin{align}\label{sten2}
    \sup_{x\in M}\left[\limsup_{y\to x}\frac{\bE (Z_{1,d}^{x,y})}{d(x,y)}\right]\leq \lambda.
\end{align}
It is clear that \eqref{sten1} implies \eqref{sten2}. In \cite[Remark 9]{sten} an example of a triple $(\cF,\bp,d)$ that is $\varepsilon$-local $\log$-CA but not LCWS is shown.

\begin{lemma}
	$\varepsilon$-LCA implies LCWS and ESCA.
\end{lemma}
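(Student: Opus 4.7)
The plan is that both implications follow directly from unpacking the definitions, using only that $\varepsilon$-LCA gives a \emph{uniform} bound $\lambda\in(0,1)$ on the one-step expected contraction ratio over the pairs at distance less than $\varepsilon$. No iteration is needed; in particular, for ESCA one can take $\ell=1$.

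First I would handle LCWS. Fix $x\in M$ and any $y\in M$ with $0<d(x,y)<\varepsilon$; by the $\varepsilon$-LCA hypothesis,
\[
    \frac{\bE(Z_{1,d}^{x,y})}{d(x,y)}\le\lambda.
\]
Since the set $\{y\in M\colon 0<d(x,y)<\varepsilon\}$ contains a punctured neighborhood of $x$, the bound $\lambda$ applies to all $y$ in that punctured neighborhood, so $\limsup_{y\to x}\bE(Z_{1,d}^{x,y})/d(x,y)\le\lambda$. Since $x$ was arbitrary and $\lambda\in(0,1)$ does not depend on $x$, taking the supremum over $x\in M$ yields LCWS with the same constant $\lambda$.

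Next I would handle ESCA. Fix $x\in M$ and set $V_{(x,x)}\eqdef B(x,\varepsilon/2)\times B(x,\varepsilon/2)$, which is an open neighborhood of $(x,x)$ in $M\times M$. For any $(y,z)\in V_{(x,x)}$ with $y\ne z$, the triangle inequality gives
\[
    0<d(y,z)\le d(y,x)+d(x,z)<\varepsilon,
\]
so $(y,z)$ lies in the range over which the $\varepsilon$-LCA bound applies. Thus $\bE(Z_{1,d}^{z,y})/d(z,y)\le\lambda$, and taking the supremum,
\[
    \sup_{(y,z)\in V_{(x,x)},\,y\ne z}\frac{\bE(Z_{1,d}^{z,y})}{d(z,y)}\le\lambda<1,
\]
which is precisely the ESCA condition with $\ell=1$.

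There is essentially no obstacle here: the entire argument rests on observing that the hypothesis ``uniform contraction on pairs within distance $\varepsilon$'' is strictly stronger than either ``contraction in the limsup as $y\to x$'' (LCWS) or ``contraction on some open neighborhood of the diagonal at $(x,x)$'' (ESCA with $\ell=1$). The only point worth stating explicitly in the write-up is that the open neighborhood $V_{(x,x)}$ is built from metric balls of radius $\varepsilon/2$ so that the triangle inequality forces every pair in it to satisfy the $\varepsilon$-LCA constraint.
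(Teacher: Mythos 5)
Your proof is correct and follows essentially the same route as the paper: the LCWS implication is an immediate unpacking of definitions, and for ESCA the paper likewise takes $\ell=1$ with the neighborhood $V_{(x,x)}=\{(y,z)\colon d(y,z)<\varepsilon\}$, of which your product of $\varepsilon/2$-balls is just a slightly smaller (equally valid) choice.
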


\begin{proof}
The first implication is immediate. To check the second one, take $k=1$  and $V_{(x,x)}=\{(y,z)\colon d(y,z)<\varepsilon\}$ for every $x\in M$.
\end{proof}

%--------------------------------------------------------------------------------
\section{Conditions to guarantee CA}\label{sec:CondCA}

Let us begin this section by presenting a sufficient condition for the existence of a metric $D$  equivalent to $d$ for which the system is CA. We always consider an IFS $\cF$ of a metric space $(M,d)$. 

\begin{proposition}\label{pro:3}
Assume that  $(\cF,\bp,d)$ is SA${}_{\rm exp}$, that is, there exist constants $C>0$ and $\lambda\in(0,1)$ such that 
\begin{align}\label{contccc}
	\bE (Z_{n,d}^{x,y})\leq C\lambda^n
                \quad\text{for every $x,y\in M$ and $n\in \bN$}.	
\end{align}
For every $q\in(\lambda,1)$ 
\[	D(x,y)
	\eqdef 
	\sum_{n\geq 0}\frac{q^n}{\lambda^n}\bE (Z_{n,d}^{x,y}).
\]
defines a metric on $M$ which is equivalent to $d$ such that $(\cF,\bp,D)$ is CA.
\end{proposition}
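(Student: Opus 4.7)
The plan is to verify in order that (i) $D$ is well-defined and finite, (ii) $D$ is a metric, (iii) $D$ is topologically equivalent to $d$, and (iv) $(\mathcal{F},\bp,D)$ is CA with contraction rate $\lambda/q$. The main computational step is (iv), and it proceeds via a telescoping manipulation using the identity $\sum_i p_i\,\psi_n(f_i(x),f_i(y))=\psi_{n+1}(x,y)$ established in Lemma~\ref{lemaABC1}.

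For (i) and (ii), the bound \eqref{contccc} gives $\frac{q^n}{\lambda^n}\bE(Z_{n,d}^{x,y})\le C q^n$, so the defining series for $D$ converges and is in fact uniformly bounded on $M\times M$ by $d(x,y)+Cq/(1-q)$. By Lemma~\ref{lemaABC1} each function $\psi_n(x,y)\eqdef\bE(Z_{n,d}^{x,y})$ is a pseudometric on $M$, hence so is any convergent linear combination with nonnegative coefficients; thus $D$ satisfies symmetry and the triangle inequality. Since $\psi_0=d$ appears as the $n=0$ term, we have $D\ge d$, and $D(x,y)=0$ forces $d(x,y)=0$, i.e.\ $x=y$.

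For (iii), the inequality $D\ge d$ already shows $B_D(x,\varepsilon)\subset B_d(x,\varepsilon)$, so the $D$-topology refines the $d$-topology. For the reverse, I will observe that the partial sums $D_N(x,y)\eqdef\sum_{n=0}^{N}\frac{q^n}{\lambda^n}\psi_n(x,y)$ are continuous with respect to $d$ (each $\psi_n$ is continuous by Lemma~\ref{lemaABC1}), and $D_N\to D$ uniformly on $M\times M$ by the Weierstrass $M$-test with $M_n=C q^n$. Hence $D$ is $d$-continuous; combined with $D(x,x)=0$, this yields that every $D$-ball around $x$ contains a $d$-ball around $x$, so $d$ and $D$ generate the same topology.

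For (iv), I compute $\bE(Z_{1,D}^{x,y})=\sum_i p_i D(f_i(x),f_i(y))$ by interchanging the finite sum over $i$ with the infinite sum defining $D$ (permissible since all terms are nonnegative), and then apply Lemma~\ref{lemaABC1}:
\[
\bE(Z_{1,D}^{x,y})
=\sum_{n\ge 0}\frac{q^n}{\lambda^n}\sum_i p_i\,\psi_n(f_i(x),f_i(y))
=\sum_{n\ge 0}\frac{q^n}{\lambda^n}\psi_{n+1}(x,y).
\]
Re-indexing $m=n+1$ and factoring out $\lambda/q$ yields
\[
\bE(Z_{1,D}^{x,y})
=\frac{\lambda}{q}\sum_{m\ge 1}\frac{q^m}{\lambda^m}\psi_m(x,y)
=\frac{\lambda}{q}\bigl(D(x,y)-d(x,y)\bigr)
\le\frac{\lambda}{q}\,D(x,y),
\]
and $\lambda/q\in(0,1)$ by the choice $q\in(\lambda,1)$. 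No step should present a serious obstacle: the only non-routine ingredient is noticing the telescoping re-indexing, and the weight $q^n/\lambda^n$ is engineered precisely so that convergence requires $q<1$ while contraction requires $q>\lambda$.
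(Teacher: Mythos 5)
Your proposal is correct and follows essentially the same route as the paper: the contraction estimate is the identical re-indexing computation via Lemma~\ref{lemaABC1}, and your topological-equivalence argument (uniform convergence of the partial sums via the bound $Cq^n$, hence $d$-continuity of $D$, hence $d$-balls inside $D$-balls) is just a streamlined packaging of the paper's explicit choice of $\varepsilon$, $L$, and the sets $U_n$.
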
 

\begin{proof} 
	Pick $q\in(\lambda,1)$ and define $D\colon M\times M\to \mathbb{R}$ by \eqref{contccc}.
Note that it follows from our hypothesis that $D$ is well-defined. It is easy to check that $D$ is a metric. It remains to show that $D$ has the claimed properties.  By Lemma \ref{lemaABC1}, for every $x,y\in M$
\[\begin{split}
     \sum_{i=0}^{N-1}p_iD(f_i(x),f_i(y))
     &=\sum_{n\geq 0}\frac{q^n}{\lambda^n}\sum_{i=0}^{N-1}
     	p_i \bE \big(Z_{n,d}^{f_i(x),f_i(y)}\big)
     =\sum_{n\geq 0}\frac{q^n}{\lambda^n}\bE (Z_{n+1,d}^{x,y})\\
     &=\frac{\lambda}{q} \sum_{n\geq 1}
     	\frac{q^n}{\lambda^n}\bE (Z_{n,d}^{x,y})
     \leq \frac{\lambda}{q} \sum_{n\geq 0}
     	\frac{q^n}{\lambda^n}\bE (Z_{n,d}^{x,y})
     =\frac{\lambda}{q}D(x,y).
 \end{split}\]
Hence, $(\cF,\bp,D)$ is CA with contraction rate $\lambda/q\in(0,1)$.

It remains to see that $D$ and $d$ are  equivalent. First note that $d\le D$, which implies that the topology of $(M,d)$ is a subset of the topology of $(M,D)$. Now, let us prove that the topology of $(M,D)$ is a subset of the topology of $(M,d)$. Given $V$ an open set of $(M,D)$ and $x\in V$, there exists $r>0$ such that $B_D(x,r)\subset V$, where $B_D(x,r)$ is the open ball relative to the metric $D$ with center $x$ and radius $r$. Let $\varepsilon\eqdef (1-q)r/(2-q)>0$. Take $L\in\mathbb{N}$ such that 
\[
	C\sum_{n\geq L}q^n<\varepsilon.
\]	
It follows from Lemma \ref{lemaABC1}, that 
\[
	U
	\eqdef \bigcap_{n=0}^{L-1}U_n,\quad\text{ where }\quad
	U_n=\{y\in M: \bE (Z_{n,d}^{x,y})<\lambda^n\varepsilon\}
\]
is an open set of $(M,d)$. Furthermore, for every $y\in U$
\[
	D(x,y)
	\leq \sum_{n=0}^{L-1}\frac{q^n}{\lambda^n}\bE (Z_{n,d}^{x,y})
		+ \sum_{n=L}^\infty\frac{q^n}{\lambda^n}\bE (Z_{n,d}^{x,y})
	<  \sum_{n=0}^{L-1}\varepsilon q^n +\varepsilon
	< \varepsilon\frac{1}{1-q}+\varepsilon=r.
\]
Thus, $x\in U\subset B_D(x,r)$ and hence $x\in U\subset V$, which proves the desired. Therefore, the topologies of $(M,d)$ and  $(M,D)$ coincide.
\end{proof}

\begin{remark}
Proposition \ref{pro:3} can be applied to IFS $\cF$ on $\bS^1$ induced by the projective action of $GL(2,\bR)$ matrix cocycles and implies the  existence of a metric $D$ that makes $(\cF,\bp,D)$ CA. We refrain from providing the details. In this context, the existence of a unique stationary measure is well known (see, for example, \cite[Chapter II]{PJ85}) and no further immediate application of CA is given.

However, in this respect, it is reasonable to ask if the existence of a unique stationary measure implies the existence of some metric that preserves the topology and makes the system CA?
\end{remark}

\begin{remark}
If $(\cF,\bp,d)$ is not ESCA, then $D$ provided by Proposition \ref{pro:3}  is in general not strongly equivalent. Indeed, if ESCA fails, then there exists $x\in K$ such that for every $n\geq 1$ there exists  a sequence $\{x_k^n\}_{k\in\mathbb{N}}$ in $M$ such that $\lim_{k\to\infty}x_k^n= x$ in $(M,d)$ and 
\[
	\lim_{k\to\infty}\frac{\bE (Z_{n,d}^{x,x^n_k})}{d(x,x^n_k)}\geq1.
\]	
Therefore, we can find a sequence $\{y_n\}_{n\in\bN}$ such that
\[
	\lim_{n\to\infty} \frac{D(x,y_n)}{d(x,y_n)}=\infty,
\]	
\end{remark}

We now invoke the results obtained in Sections \ref{sec:22} and \ref{sec:23} to prove Theorem \ref{pro:1}.

\begin{proof}[Proof of Theorem \ref{pro:1}]
Assume that $(\cF,\bp,d)$ is NEA, LECA, and ESCA.

First, let us construct  for every $(x,y)\in K\times K$ an open neighborhood $V_{(x,y)}$ as follows. For $(x,y)\in K\times K$, $x\neq y$, take $\ell=\ell(x,y)\in\bN$ and $\lambda(x,y)\in(0,1)$ as in the definition of  (see \eqref{ip}) satisfying
\[
	\bE(Z_{\ell,d}^{x,y})\le \lambda(x,y) d(x,y)
\]
and let
\[
	V_{(x,y)}
	\eqdef \left\{ (z,w)\in K\times K\colon 
		z\neq w \mbox{ and } \frac{\bE (Z_{\ell,d}^{z,w})}{ d(z,w)}
							<\sqrt{\lambda(x,y)} \right\}.
\]
As the function 
\[
	f\colon K\times K\setminus\{(z,z)\colon z\in K\}\to \mathbb{R},\quad
	f(z,w)\eqdef\frac{\bE (Z_{\ell,d}^{z,w})}{ d(z,w)},
\]	
 is continuous and the set $\{(z,z)\colon z\in K\}$ is closed, $V_{(x,y)}$ is an open subset of $K\times K$ containing $(x,y)$.
For $x=y\in K$, take $\ell=\ell(x,x)\in\bN$ and an open neighborhood $V_{(x,x)}$ of $(x,x)$ as in the definition of ESCA (see \eqref{iip}) satisfying
\[
	\lambda(x,x)
	\eqdef \sup_{(z,w)\in V_{(x,y)}\colon y\neq z}
	\frac{\bE (Z_{\ell,d}^{z,w})}{ d(z,w)}<1
\]	
to get
    \[\bE (Z_{\ell,d}^{z,w})\leq \lambda(x,x) d(z,w),\]
    for all $(z,w)\in V_{(x,y)}$.
    
As $K\times K$ is compact, it has a finite sub-cover $\{V_{(x_1,y_1)},\ldots,V_{(x_m,y_m)}\}$. Hence, for every $i\in\{1,\ldots,m\}$ there are $\ell_i=\ell(x_i,y_i)\in\bN$ and $\lambda_i=\lambda(x_i,y_i)\in(0,1)$ such that for all $(z,w)\in V_{(x_i,y_i)}$ it holds
\[
	\bE (Z_{\ell_i,d}^{z,w})\leq \lambda_i d(z,w).
\]	
Take $k\eqdef\max_{1\leq i\leq m}\ell_i$ and $\lambda\eqdef\max_{1\leq i \leq m} \lambda_i$. Hence, together with Lemma \ref{lemaABC} (2), for every $(x,y)\in K\times K$ there exists $i\in\{1,\ldots,m\}$ such that
\[
	\bE (Z_{k,d}^{x,y})
	\le \bE (Z_{\ell_i,d}^{x,y})
	\leq \lambda_i d(x,y)
	\leq \lambda d(x,y).
\]	
But this implies that $(\cF,\bp,d)$ is $k$-ECA with contraction rate $\lambda\in(0,1)$. 
By Proposition \ref{prolem1:1}, $(\cF,\bp,D)$, where $D\eqdef d_{k,\lambda}$ is defined in \eqref{metricaECA}, is CA with contraction rate $\lambda^{1/k}$.
Hence, invoking Lemma \ref{lemrem:strequ}, $d$ and $D$ are strongly equivalent.

Together with Lemma \ref{lem:2}, this proves the theorem.
\end{proof}

The following provides a necessary condition for the existence of a metric $ D $  equivalent to $ d $ for which the system is CA. 

\begin{proposition}\label{pro:2}
Suppose that $(\cF,\bp,d)$ is NEA on some compact metric space $(K,d)$. 
If there exists a metric  $D$ on $K$  equivalent to $d$ such that $(\cF,\bp,D)$ is CA, then $(\cF,\bp,d)$ is SA.
\end{proposition}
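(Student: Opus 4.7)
The plan is to show that this is essentially a two-step deduction from lemmas already established, most notably Lemma \ref{lemrem:SA}.

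First, I would argue that $(\cF,\bp,D)$ is SA. Indeed, since $(\cF,\bp,D)$ is CA with some contraction rate $\lambda\in(0,1)$, Lemma \ref{lemalambda} gives $\bE(Z^{x,y}_{n,D})\le \lambda^n D(x,y)$, which tends to $0$ as $n\to\infty$; this is precisely SA for $(\cF,\bp,D)$. (Boundedness of $D$ on $K$ is not needed for this deduction, only compactness would matter later.)

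Next, I would apply Lemma \ref{lemrem:SA} with the roles of the two metrics swapped: take $D$ in the role of the SA-metric and $d$ in the role of the NEA-metric. The hypotheses of that lemma require: (i) the SA-metric is bounded on $K$; (ii) the two metrics are equivalent; (iii) the SA hypothesis for one and the NEA hypothesis for the other. Item (iii) is exactly what we just established together with the hypothesis of the proposition. Item (ii) holds by assumption. For item (i), observe that since $d$ and $D$ generate the same topology, $K$ is also compact in the $D$-topology, and since $D\colon K\times K\to[0,\infty)$ is continuous with respect to its own topology (hence with respect to the common topology on $K\times K$), $D$ is bounded on $K\times K$. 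Thus Lemma \ref{lemrem:SA} applies and yields that $(\cF,\bp,d)$ is SA, as desired.

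No real obstacle is expected: the key conceptual point is just to recognize that Lemma \ref{lemrem:SA} is symmetric in the roles of $d$ and $D$ provided one verifies boundedness of whichever metric plays the role of the SA-metric; compactness of $K$ combined with topological equivalence makes this automatic. If one prefers not to invoke Lemma \ref{lemrem:SA} as a black box, the same argument can be made explicit: from $D$-SA, Chebyshev and the proof of Lemma \ref{SAimpliesprox} produce a subsequence $n_k$ along which $Z^{x,y}_{n_k,D}\to 0$ almost surely; topological equivalence on compact $K$ forces $Z^{x,y}_{n_k,d}\to 0$ almost surely; dominated convergence (with $d$ bounded on compact $K$) yields $\bE(Z^{x,y}_{n_k,d})\to 0$; and finally the NEA hypothesis on $d$, via Lemma \ref{lemaABC}(2), gives monotonicity of $n\mapsto \bE(Z^{x,y}_{n,d})$, so convergence along a subsequence upgrades to convergence of the full sequence, which is SA.
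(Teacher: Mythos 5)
Your proposal is correct, and its mathematical content coincides with the paper's proof; the only difference is packaging. The paper proves the proposition directly: from CA and Lemma \ref{lemalambda} it gets $\bE(Z^{x,y}_{n,D})\le\lambda^nD(x,y)\to0$, then runs exactly the chain you describe in your fallback — Chebyshev, an almost surely convergent subsequence (via \cite[Theorem 2.3.2]{Durrett}), transfer to $d$ using equivalence of the metrics on the compact space, dominated convergence, and finally the NEA hypothesis through Lemma \ref{lemaABC} to upgrade subsequential convergence of $\bE(Z^{x,y}_{n,d})$ to convergence of the full sequence (the paper even gets uniformity via Lemma \ref{lemaABC}(3), which is more than SA requires). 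Your primary route instead observes that $(\cF,\bp,D)$ is SA and then quotes Lemma \ref{lemrem:SA} with the roles of the two metrics interchanged; this is legitimate because, as you note, compactness of $K$ together with topological equivalence makes both metrics bounded (so both the lemma's stated boundedness hypothesis and the boundedness actually used in its dominated-convergence step are satisfied), and it is slightly more economical than the paper's rederivation. One small point worth keeping explicit, which you do flag: the step ``$Z^{x,y}_{n_k,D}\to0$ a.s.\ implies $Z^{x,y}_{n_k,d}\to0$ a.s.'' uses that on a compact space topologically equivalent metrics are uniformly equivalent; mere topological equivalence on a noncompact space would not suffice.
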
 

\begin{proof}
Assuming that $(\cF,\bp,D)$ is CA, by Lemma \ref{lemalambda} for every $n\in\bN$ it holds
\[
    \bE (Z_{n,D}^{x,y})\leq \lambda^n D(x,y)
    \quad\text{for every $x,y\in K$}.
\]

 Fix $x,y\in K$. By the above, it holds
\[
    \lim_{n\to \infty}\bE (Z_{n,D}^{x,y})=0,
\]
 that is, $Z_{n,D}^{x,y}$ converges to $0$ in $L^1$. By Chebyshev’s inequality, for every $\varepsilon>0$ it holds
\[
	\mu(Z_{n,D}^{x,y}\geq \varepsilon)
	\leq \varepsilon^{-1}\bE (Z_{n,D}^{x,y}).
\]	
and hence it follows that $Z_{n,D}^{x,y}$ converges to $0$ in probability. By \cite[Theorem 2.3.2]{Durrett}, there exists a sub-sequence $(n_k)_k$ such that $Z_{n_k,D}^{x,y}$ converges almost
surely to 0 as $k\to \infty$.
 
The fact that $D$ and $d$ are  equivalent implies $Z_{n_k,d}^{x,y}$ converges almost
surely to 0 as $k\to \infty$. By dominated convergence theorem, we conclude that 
\[
	\lim_{k\to\infty}\bE (Z_{n_k,d}^{x,y})= 0.
\]
As we assume that $(\cF,\bp,d)$ is NEA on a compact space and $x,y$ were arbitrary, by Lemma \ref{lemaABC} (2)--(3) it follows
\[
	\lim_{n\to\infty}\bE (Z_{n,d}^{(\cdot),(\cdot)})=0
\]	
uniformly. 
This implies SA.
\end{proof}

%-------------------------------------------------------------------------------

%---------------------------------------------------------------------------
\section{CA for IFS's on $\bS^1$}\label{sec:5}
%---------------------------------------------------------------------------

In this section, we will study the particular case of an IFS $\mathcal{F}$ of homeomorphisms on $K=\bS^1$ (equipped with the usual metric $d(x,y)\eqdef\min\{\lvert x-y\rvert,1-\lvert x-y\rvert\}$. In particular, we prove Theorem \ref{teo3.1}. 

We first recall the following results which are an immediate consequence of \cite[Theorem A and Proposition 4.2]{Malicet}, respectively.

\begin{proposition}\label{P1}
Let $\cF$ be an IFS of homeomorphisms of $\bS^1$ and assume that there does not exist a probability measure which is invariant by every element of $\mathcal{F}$. Then for every non-degenerate probability vector $\bp$ there is a constant $\lambda\in(0,1)$ such that for every $x\in\bS^1$ and almost every $\xi\in\Sigma_N^{+}$ there exists an open neighborhood $I_x(\xi)\subset\bS^1$ of $x$ such that for all $ n\in\mathbb{N}$ it holds
\[	 Z_{n,d}^{w,z}(\xi)
	 =d(f_{\xi}^n(w),f_{\xi}^n(z))\leq \lambda^n
	     \quad\text{for every $w,z\in I_x(\xi)$}.
\]\end{proposition}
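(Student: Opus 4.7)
The plan is to derive the strong exponential contraction on random neighborhoods from two ingredients: an asymptotic Lyapunov-type exponent argument exploiting the absence of a probability measure simultaneously invariant by every element of $\cF$, together with a shrinking argument that upgrades asymptotic exponential decay to a bound valid from $n=0$.

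First I would establish the asymptotic contraction. Let $\nu$ be any stationary Borel probability measure for the random walk driven by $\bp$, which exists by weak-$*$ compactness of the space of probability measures on $\bS^1$. On the skew product $T(\xi,x)\eqdef(\sigma\xi,f_{\xi_1}(x))$ of $\Sigma_N^+\times\bS^1$, I would lift $\mu\times\nu$ to the natural extension and disintegrate over the $\sigma$-algebra of past events, obtaining fiber measures $\{\nu_\xi\}$. A Furstenberg-type argument shows that if $\nu_\xi$ were non-atomic on a positive-measure set of $\xi$'s, then averaging over the past would produce a measure invariant under every $f_i$, contradicting the hypothesis. Hence $\nu_\xi$ is $\mu$-a.s.\ a Dirac mass, which is equivalent to saying that forward images of distinct points asymptotically coalesce. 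Combining this with Kingman's subadditive ergodic theorem applied to $\log d(f_\xi^n(x),f_\xi^n(y))$ and the ergodicity of the Bernoulli shift, one extracts a deterministic rate $\lambda_0\in(0,1)$ such that for every $x\ne y$, $\mu$-a.s.
\[
	\limsup_{n\to\infty}\tfrac{1}{n}\log d(f_\xi^n(x),f_\xi^n(y))\le\log\lambda_0.
\]

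Next I would upgrade this to the uniform statement. Fix $x\in\bS^1$ and choose $\lambda\in(\lambda_0,1)$. For $\mu$-a.e.\ $\xi$ one can produce an interval $J(\xi)\ni x$ and a random constant $C(\xi)<\infty$ with $|f_\xi^n(J(\xi))|\le C(\xi)\lambda_0^n$ for all $n\ge 0$; this uses the asymptotic contraction together with the fact that orientation-preserving (or reversing) homeomorphisms of $\bS^1$ send intervals to intervals, reducing the length estimate to endpoint distances, combined with a covering argument on $\bS^1$ to handle measurability in $\xi$. Choose $N=N(\xi)\in\bN$ large enough that $C(\xi)\lambda_0^N\le \lambda^N$. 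By continuity of each of the finitely many maps $f_{\xi_1},\dots,f_{\xi_N}$ at the points $x,f_{\xi_1}(x),\dots,f_\xi^{N-1}(x)$, there is a radius $r=r(\xi)>0$ with $f_\xi^n(B(x,r))\subset B(f_\xi^n(x),\lambda^n/2)$ for every $0\le n\le N$. Setting $I_x(\xi)\eqdef B(x,r)\cap J(\xi)$, the desired estimate $d(f_\xi^n(w),f_\xi^n(z))\le\lambda^n$ then holds for $n\le N$ by construction, and for $n\ge N$ by concatenating the bound at time $N$ with the contracting asymptotic regime thereafter.

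The main obstacle is the rigidity step in Step 1: passing from the \emph{qualitative} no-common-invariant-measure assumption to a \emph{quantitative} deterministic Lyapunov rate $\lambda_0<1$, uniform over base points. The natural route is to analyze induced random dynamics on the configuration space $(\bS^1\times\bS^1)\setminus\Delta$ off the diagonal: the hypothesis forces every stationary probability measure on $\bS^1\times\bS^1$ to be supported on $\Delta$, so random orbits of pairs are pushed towards $\Delta$, and an Antonov-type exclusion argument on the circle (as implemented by Malicet) together with ergodicity of $(\Sigma_N^+,\mu,\sigma)$ delivers a strictly negative deterministic exponent. Compactness of $\bS^1$ then yields the uniformity in $x$.
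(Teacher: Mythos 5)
The paper does not actually prove Proposition \ref{P1}: it is quoted verbatim as an immediate consequence of Malicet's Theorem A (\cite{Malicet}), and your proposal is in effect an attempt to reprove that theorem. The genuine gap is in your Step 1, which is precisely the hard quantitative core. Passing from ``every stationary measure on $\bS^1\times\bS^1$ lives on the diagonal'' to a \emph{deterministic} rate $\lambda_0<1$ cannot be done by ``Kingman's subadditive ergodic theorem applied to $\log d(f_\xi^n(x),f_\xi^n(y))$'': for a fixed pair $(x,y)$ this process is not subadditive over the shift (the increment at time $n$ involves the images $f_\xi^n(x),f_\xi^n(y)$, not the original pair), while the natural subadditive quantity $\sup_{x\ne y}\log\bigl(d(f_\xi^n(x),f_\xi^n(y))/d(x,y)\bigr)$ is nonnegative for circle homeomorphisms (some arc is never contracted, since the total length is preserved), so Kingman gives nothing negative. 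Running Kingman over the pair skew product instead requires an invariant measure off the diagonal, which your own rigidity step shows does not exist; and since the maps are only homeomorphisms there is no derivative cocycle to which a Lyapunov-exponent argument could attach. You acknowledge this as ``the main obstacle'' and defer it to ``an Antonov-type exclusion argument \ldots as implemented by Malicet'' --- but that is exactly the content of Malicet's Theorem A, so at that point you are citing the statement you set out to prove. Either cite \cite[Theorem A]{Malicet} outright (as the paper does) or supply his actual mechanism (the analysis of the Markov operator and the large-deviation/martingale estimate producing the uniform exponential rate); the Furstenberg-type Dirac-fiber argument only yields qualitative synchronization, with no rate and no uniformity in $x$.

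Two secondary points would also need repair even granting a rate. First, upgrading the per-pair almost sure bound $\limsup_n \frac1n\log d(f_\xi^n(x),f_\xi^n(y))\le\log\lambda_0$ to a random interval $J(\xi)\ni x$ with $|f_\xi^n(J(\xi))|\le C(\xi)\lambda_0^n$ is not automatic: the null set depends on the pair, and controlling an arc's image by its endpoints' distance on $\bS^1$ must exclude the image being the complementary (long) arc, which requires an extra argument (e.g.\ a common modulus of continuity of the finitely many maps plus a threshold argument). Second, ``compactness of $\bS^1$ then yields the uniformity in $x$'' is not a proof; the uniformity of $\lambda$ over $x$ (and its independence of $\xi$) is part of what Malicet establishes, not a soft consequence of compactness of the circle. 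Your Step 2 (choosing $N(\xi)$ with $C(\xi)\lambda_0^N\le\lambda^N$ and shrinking the neighborhood by continuity to handle $n\le N$) is fine once Step 1 is genuinely available in the form $|f_\xi^n(J(\xi))|\le C(\xi)\lambda_0^n$.
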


For the statement of the next proposition, consider the \emph{shift} $\sigma\colon\Sigma_{N}^{+}\to\Sigma_{N}^{+}$ defined by 
\[    (\sigma(\xi))_{j}=\xi_{j+1},\quad j\geq 1.
\]Recall that it is continuous.

\begin{proposition}\label{P2}
Under the hypotheses of Proposition \ref{P1}, consider the map 
\[
	G\colon\Sigma_N^{+}\times \bS^1\times \bS^1
		\to \Sigma_N^{+}\times \bS^1\times \bS^1
	,\quad	 
	G(\xi,x,y)
	\eqdef (\sigma(\xi),f_{\xi_1}(x),f_{\xi_1}(y)).
\]	 
Let $\mathcal{E}=\bigcup_{\xi\in\Sigma_N^{+}}\{\xi\}\times U(\xi)\subset \Sigma_N^{+}\times \bS^1\times \bS^1$ such that $G^{-1}(\mathcal{E})\subset \mathcal{E}$ and $U(\xi)$ is open in $\bS^1\times \bS^1$ for every $\xi\in\Sigma_N^{+}$. Let $\bp$ be a non-degenerate probability vector and $\mu$ its its associate Bernoulli measure $\mu$ on $\Sigma^+_N$ and assume that
\[
	\left(\mu\otimes \nu\right) (\mathcal{E})>0
\]	
for every stationary%
\footnote{Recall that here $\nu$ is \emph{stationary} if and only if $\mu\otimes\nu$ is invariant by the skew product $G$.} probability measure $\nu$ on $\bS^1\times \bS^1$. Then actually,
\[
	\left(\mu\otimes \nu\right) (\mathcal{E})=1
\]	
for every probability measure $\nu$ on $\bS^1\times \bS^1$ (not necessarily stationary).
\end{proposition}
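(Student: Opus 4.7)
The plan is to first establish the conclusion when $\nu$ is stationary, by exhibiting a $G$-invariant saturation of $\mathcal{E}$ and invoking ergodicity, and then to bootstrap to arbitrary $\nu$ via a lower-semi-continuous function that must be forced to be identically $1$. To start, since $G^{-1}(\mathcal{E}) \subset \mathcal{E}$, the sequence $\{G^{-n}(\mathcal{E})\}_{n \geq 0}$ is decreasing, so the intersection $\mathcal{E}_\infty \eqdef \bigcap_{n \geq 0} G^{-n}(\mathcal{E})$ satisfies $G^{-1}(\mathcal{E}_\infty) = \mathcal{E}_\infty$. Since $\mu \otimes \nu$ is $G$-invariant precisely when $\nu$ is stationary, this forces $(\mu \otimes \nu)(\mathcal{E}_\infty) = (\mu \otimes \nu)(\mathcal{E})$ for every stationary $\nu$. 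Next I would invoke the classical fact that, for the skew product $G$ over the Bernoulli shift $(\Sigma_N^+, \sigma, \mu)$, the measure $\mu \otimes \nu$ is $G$-ergodic whenever $\nu$ is ergodic stationary; combined with the hypothesis $(\mu \otimes \nu)(\mathcal{E}) > 0$, this forces $(\mu \otimes \nu)(\mathcal{E}_\infty) \in \{0,1\}$ to equal $1$. An ergodic decomposition then upgrades $(\mu \otimes \nu)(\mathcal{E}) = 1$ to every stationary $\nu$.

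The key tool for extending to non-stationary $\nu$ is the fiber function
\[
    \varphi(x,y)
    \eqdef \mu\bigl(\{\xi \in \Sigma_N^+ : (x,y) \in U(\xi)\}\bigr) \in [0,1],
\]
which, by Fubini and the previous step, equals $1$ for $\nu$-almost every $(x,y)$ whenever $\nu$ is stationary. Two further properties of $\varphi$ will be essential. First, since each $U(\xi)$ is open, the integrand $(x,y) \mapsto \mathbbm{1}_{U(\xi)}(x,y)$ is lower semi-continuous, and Fatou's lemma promotes this to lower semi-continuity of $\varphi$. Second, the hypothesis $G^{-1}(\mathcal{E}) \subset \mathcal{E}$ combined with the product structure of the Bernoulli measure $\mu$ yields the supermean inequality
\[
    \varphi(x,y)
    \geq \sum_{i=0}^{N-1} p_i \varphi(f_i(x), f_i(y))
    \eqdef T\varphi(x,y),
\]
and hence $\varphi \geq T^n\varphi$ for every $n \geq 1$.

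Finally, for any $(x_0, y_0) \in \bS^1 \times \bS^1$ I would form the Ces\`aro averages $\nu_n \eqdef \tfrac{1}{n}\sum_{k=0}^{n-1}(\cF^k)_\ast \delta_{(x_0,y_0)}$ on $\bS^1 \times \bS^1$, extract a weakly convergent subsequence $\nu_{n_j} \to \nu_\infty$ by compactness, and recall that the standard Krylov--Bogolyubov computation makes $\nu_\infty$ stationary. On the one hand, the supermean inequality yields $\int \varphi \, d\nu_n = \tfrac{1}{n}\sum_{k=0}^{n-1} T^k\varphi(x_0, y_0) \leq \varphi(x_0, y_0)$; on the other, lower semi-continuity of $\varphi$ and the Portmanteau theorem give $\liminf_j \int \varphi \, d\nu_{n_j} \geq \int \varphi \, d\nu_\infty = 1$. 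Combining, $\varphi(x_0, y_0) = 1$, so $\varphi \equiv 1$ on $\bS^1 \times \bS^1$, and Fubini delivers $(\mu \otimes \nu)(\mathcal{E}) = \int \varphi \, d\nu = 1$ for every probability measure $\nu$.

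The step I expect to require the most care is justifying the $G$-ergodicity of $\mu \otimes \nu$ for $\nu$ ergodic stationary; though standard for Bernoulli-base skew products, its clean verification in this setting also forces one to confirm the joint Borel measurability of $\mathcal{E}$ that underwrites the Fubini steps throughout.
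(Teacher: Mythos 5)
Your argument is correct, but it is worth saying up front that the paper does not prove this proposition at all: it is quoted verbatim as an immediate consequence of \cite[Proposition 4.2]{Malicet}, so what you have written is a self-contained reconstruction of the result that the authors simply import. Your route is the natural one and all the steps check out: the decreasing sets $G^{-n}(\mathcal{E})$ give a strictly invariant core $\mathcal{E}_\infty$ of the same $\mu\otimes\nu$-measure for stationary $\nu$; the fiber function $\varphi(x,y)=\mu(\{\xi\colon (x,y)\in U(\xi)\})$ is bounded, lower semi-continuous by Fatou, and superharmonic ($T\varphi\le\varphi$) because $\{\xi\colon G(\xi,x,y)\in\mathcal{E}\}\subset\{\xi\colon (x,y)\in U(\xi)\}$ and $\xi_1$ is independent of $\sigma(\xi)$ under the Bernoulli measure; and the Krylov--Bogolyubov/Portmanteau sandwich $\varphi(x_0,y_0)\ge\liminf_j\int\varphi\,d\nu_{n_j}\ge\int\varphi\,d\nu_\infty=1$ forces $\varphi\equiv1$, hence $(\mu\otimes\nu)(\mathcal{E})=\int\varphi\,d\nu=1$ for arbitrary $\nu$. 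Two ingredients that you correctly flag should be made explicit if this were written out: the ergodicity of $\mu\otimes\nu$ for $\nu$ an extremal stationary measure of a one-sided Bernoulli skew product (a classical fact, e.g.\ Kifer's \emph{Ergodic Theory of Random Transformations}, Proposition I.2.1, or Viana's lectures), together with the Choquet/ergodic decomposition of stationary measures used to pass from extremal to general stationary $\nu$; and the joint Borel measurability of $\mathcal{E}$, which is implicitly part of the statement (openness of the sections $U(\xi)$ alone does not give it) and is what legitimizes every Fubini step and the measurability of $\varphi$. It is also worth noting, as a sanity check rather than a defect, that your proof never uses the circle or the absence of a common invariant measure: the proposition is a general fact about Feller systems on compact metric spaces over a Bernoulli base, which is exactly the level of generality at which Malicet states it.
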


Now, let us prove the following lemma.

\begin{lemma}\label{lemcontxy}
	Assume that $(\cF,\bp,d)$ and $\lambda\in(0,1)$ are as in Proposition \ref{P1}. If $(\cF,d)$ is proximal, then
\[
	\mu(\Omega^{x,y})=1
        \quad\text{for every $x,y\in \bS^1$},
\]
where
\begin{equation}\label{Omegaxy}
     \Omega^{x,y}
     \eqdef \{\xi\in\Sigma_{N}^{+}\colon
     	\mbox{ there exists $C>0$ such that }Z^{x,y}_{n,d}(\xi)\leq C\lambda^n \text{ for all } n\in\bN\}.
 \end{equation}
\end{lemma}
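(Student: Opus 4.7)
I would apply Proposition~\ref{P2} to a carefully chosen $G$-backward-invariant bundle $\mathcal{E}=\bigcup_\xi\{\xi\}\times U(\xi)$ and then specialize to $\nu=\delta_{(x,y)}$. For each $\xi\in\Sigma_N^+$, define
\[
  U(\xi)\eqdef\Big\{(x,y)\in\bS^1\times\bS^1:\exists\text{ open }V\ni(x,y)\text{ with }\sup_{n\ge 0}\sup_{(w,z)\in V}\lambda^{-n}d(f_\xi^n(w),f_\xi^n(z))<\infty\Big\},
\]
which is open in $\bS^1\times\bS^1$ by construction, and note that $(x,y)\in U(\xi)$ immediately forces $\xi\in\Omega^{x,y}$ (take $w=x$, $z=y$).

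Verifying $G^{-1}\mathcal{E}\subset\mathcal{E}$ is routine: if $(f_{\xi_1}(x),f_{\xi_1}(y))\in U(\sigma\xi)$ with open witness $V$ and finite bound $M$, then $V':=(f_{\xi_1}\times f_{\xi_1})^{-1}(V)$ is an open neighborhood of $(x,y)$ (as $f_{\xi_1}$ is a homeomorphism), and the identity $f_\xi^{n+1}=f_{\sigma\xi}^n\circ f_{\xi_1}$ gives $\sup_{n,(w,z)\in V'}\lambda^{-n}d(f_\xi^n(w),f_\xi^n(z))\le\max\{\diam(\bS^1),M/\lambda\}<\infty$, so $(x,y)\in U(\xi)$.

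The crux is showing $(\mu\otimes\nu)(\mathcal{E})>0$ for every stationary $\nu$ on $\bS^1\times\bS^1$. Our hypothesis ``no common invariant probability measure'' forbids common fixed points, so together with proximality, Malicet's Theorem~E (cited in the Remark after Lemma~\ref{SAimpliesprox}) yields that $(\cF,\bp,d)$ is synchronizing. Using $G$-invariance of $\mu\otimes\nu$ together with Fubini, for each $n$ one has
\[
  \int d(x,y)\,d\nu(x,y)=\int d(f_\xi^n(x),f_\xi^n(y))\,d(\mu\otimes\nu),
\]
and the right-hand side tends to $0$ as $n\to\infty$ by dominated convergence (integrand bounded by $\diam(\bS^1)$) combined with synchronization; hence $\nu$ is supported on the diagonal $\Delta$. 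But Proposition~\ref{P1} gives $(x,x)\in U(\xi)$ for every $x$ and $\mu$-a.e. $\xi$ (take $V=I_x(\xi)\times I_x(\xi)$), so Fubini yields $(\mu\otimes\nu)(\mathcal{E})\ge\nu(\Delta)=1$.

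Proposition~\ref{P2} now upgrades this to $(\mu\otimes\nu)(\mathcal{E})=1$ for every probability measure $\nu$ on $\bS^1\times\bS^1$. Choosing $\nu=\delta_{(x,y)}$ yields $\mu(\{\xi:(x,y)\in U(\xi)\})=1$, and the implication $(x,y)\in U(\xi)\Rightarrow\xi\in\Omega^{x,y}$ gives $\mu(\Omega^{x,y})=1$. The main obstacle is the diagonal-concentration step for stationary measures: it forces one to combine synchronization with the $G$-invariance of $\mu\otimes\nu$, rather than working pointwise in $\xi$ where the $\lambda^n$-contraction furnished by Proposition~\ref{P1} need not a priori propagate from the diagonal to an off-diagonal pair $(x,y)$ without the uniform-on-a-neighborhood structure encoded in $U(\xi)$.
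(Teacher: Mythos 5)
Your argument is correct, and it shares the paper's overall skeleton --- build an open-fibered, $G$-backward-invariant set $\mathcal{E}$, verify the positivity hypothesis of Proposition \ref{P2} for stationary measures, and then specialize to $\nu=\delta_{(x,y)}$ --- but the two key ingredients are implemented differently. Your fibers $U(\xi)$ (pairs admitting a neighborhood on which $d(f_\xi^n(\cdot),f_\xi^n(\cdot))\le M\lambda^n$ uniformly in $n$) are cleaner than the paper's, which are assembled from the sets $\Gamma_{k_0}(z)$ and a hitting time $k_1$; accordingly your openness and backward-invariance checks are shorter, and $(x,y)\in U(\xi)$ gives $\xi\in\Omega^{x,y}$ immediately. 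The more substantial divergence is in the positivity step: the paper uses proximality directly --- for each Dirac measure $\delta_{(x,y)}$ a compactness/subsequence argument produces a common accumulation point $z$ and a cylinder intersected with $\sigma^{-k_1}(\Gamma_{k_0}(z))$ of positive measure inside $\mathcal{E}$ --- and then integrates over stationary $\nu$; you instead invoke Malicet's Theorem E (via the remark after Lemma \ref{SAimpliesprox}, legitimately, since the absence of a common invariant measure rules out common fixed points) to upgrade proximality to synchronization, deduce from $G$-invariance of $\mu\otimes\nu$ and dominated convergence that every stationary measure of the two-point skew product is carried by the diagonal, and then use Proposition \ref{P1} only at diagonal points. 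The paper's route is self-contained relative to Propositions \ref{P1}--\ref{P2}, which matters because Corollary \ref{correm:5:7} (SA, hence almost sure synchronization along subsequences) is later \emph{derived} from this lemma, so the authors avoid importing a synchronization theorem; your route is shorter and conceptually transparent, at the price of the heavier external input. Two small points you should make explicit: for the Fubini step one needs $\mathcal{E}$ to be jointly Borel (this follows by writing $U(\xi)$ with rational radii and integer bounds and using that $f_\xi^n$ depends only on $\xi_1,\ldots,\xi_n$ --- a point the paper also leaves implicit), and the a.e.\ convergence used in the dominated convergence argument is obtained from synchronization ``for every $(x,y)$, for $\mu$-a.e.\ $\xi$'' by another Fubini application.
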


\begin{proof}
 For every $z\in\bS^1$ and $k\in\mathbb{N}$ let 
\[
	\Gamma_k(z)
	\eqdef \Big\{\eta\in\Sigma_N^{+}\colon 
	d(f_{\eta}^n(z_1),f_{\eta}^n(z_1))\leq \lambda^n
	\mbox{ for all }n\in\mathbb{N}, z_1, z_2\in \Big(z-\frac{1}{k},z+\frac{1}{k}\Big)	
			\Big\}.
\]	
Clearly, $\Gamma_k(z)\subset \Gamma_{k+1}(z)$.
By Proposition \ref{P1}, for every $z\in\bS^1$ 
\[
	\mu\Big(\bigcup_{k\in\mathbb{N}}\Gamma_k(z)\Big)
	=1.
\]	 
Hence, there is $k_0=k_0(z)\in\mathbb{N}$ such that
\begin{equation}\label{akzeropositivo}
    \mu\left(\Gamma_{k_0}(z)\right)>0. 
\end{equation}

Let $\mathcal{E}$ be the set of points $(\xi,x,y)\in\Sigma_N^{+}\times\bS^1\times\bS^1$ such that there exist $z\in\bS^1$, $k_0\in\mathbb{N}$, and $k_1\in\mathbb{N}$  satisfying
\[
 	f_{\xi}^{k_1 }(x),f_{\xi}^{k_1 }(y)
	\in\left(z-\frac{1}{k_0},z+\frac{1}{k_0}\right),\quad
 \xi\in\sigma^{-k_1}\left(\Gamma_{k_0}(z) \right)\quad \mbox{and}
 \quad\mu\left(\Gamma_{k_0}(z)\right)>0.
\] 
By the following claim, $\mathcal{E}$ is nonempty. More precisely, for every $(x,y)\in\bS^1\times\bS^1$ the set $\mathcal{E}\cap \left(\Sigma_N^{+}\times\{(x,y)\}\right)$ is nonempty.

\begin{claim}
	For every $x,y\in\bS^1$ it holds $\left(\mu\otimes \delta_{(x,y)}\right)(\mathcal{E})>0$, where $\delta_{(x,y)}$ is the Dirac measure at $(x,y)$.
\end{claim}

\begin{proof}
Fix $x,y\in\bS^1$. By proximality, there exist $\xi\in\Sigma_N^{+}$ and an increasing sequence $(n_k)_{k\in\mathbb{N}}$ such that  we have
\[
    \lim_{k\to\infty}Z_{n_k,d}^{x,y}(\xi)=0.
\]
By compactness of $\bS^1$,  there are $z\in\bS^1$ and a subsequence $(n_{k_j})_{j\geq1}$ such that
\[
	f_{\xi}^{n_{k_j}}(x)\to z \quad \text{and} \quad f_{\xi}^{n_{k_j}}(y)\to z, 
\]	
as $j\to\infty$. Hence, taking $k_0=k_0(z)$ as in \eqref{akzeropositivo}, there exists $k_1 \in\mathbb{N}$ large enough such that $f_{\xi}^{k_1 }(x)$ and $f_{\xi}^{k_1 }(y)$ are both in $\left(z-\frac{1}{k_0},z+\frac{1}{k_0}\right)$. 

As $\mu$ is $\sigma$-invariant, it holds $\mu( \sigma^{-k_1 }( \Gamma_{k_0}(z))) =\mu( \Gamma_{k_0}(z))>0$. Furthermore,
\[
	\sigma^{-k_1 }\left( \Gamma_{k_0}(z)\right)
	=\underbrace{\Sigma^+_N\times\cdots\times\Sigma^+_N\,}_\text{$k_1$-times} \times\Gamma_{k_0}(z),
\]	
which implies that $\mu\left([\xi_1,\ldots,\xi_{k_1 }]\cap\sigma^{-k_1 }\left( \Gamma_{k_0}(z)\right) \right)>0$. Since 
\[
	\left([\xi_1,\ldots,\xi_{k_1 }]\cap
		\sigma^{-k_1 }\left( \Gamma_{k_0}(z)\right)\right)\times\{(x,y)\}
	\subset\mathcal{E},
\] 
this implies the claim.
\end{proof}

Integrating over $(x,y)\in\bS^1\times\bS^1$ with respect to any stationary probability measure $\nu$ on $\bS^1\times \bS^1$, it follows that
\[
	\left(\mu\otimes \nu\right) (\mathcal{E})>0.
\]	

\begin{claim}
The set $\mathcal{E}$ is $G$-invariant, that is,
$G^{-1}(\mathcal{E})\subset \mathcal{E}$.
\end{claim}  
\begin{proof}
If $(\xi,x,y)\in G^{-1}(\mathcal{E})$ then $(\sigma(\xi),f_{\xi_1}(x),f_{\xi_1}(y))\in\mathcal{E}$. Hence there are $k_1 ,k_0\in\bN$ and $z\in\bS^1$ satisfying
\[\begin{split}
 	&f_{\sigma(\xi)}^{k_1 }(f_{\xi_1}(x)),f_{\sigma(\xi)}^{k_1 }(f_{\xi_1}(y))
	\in \Big(z-\frac{1}{k_0},z+\frac{1}{k_0}\Big),\\
	&\sigma(\xi)\in\sigma^{-k_1}(\Gamma_{k_0}(z)),\quad
	\mu\left(\Gamma_{k_0}(z)\right)>0.
\end{split}\]	 
As $f_{\sigma(\xi)}^{k_1 }\circ f_{\xi_1}=f^{k_1 +1}_{\xi}$, this implies
\[\begin{split}
 	&f_{\xi}^{k_1 +1}(x),f_{\xi}^{k_1 +1}(y)
		\in \Big(z-\frac{1}{k_0},z+\frac{1}{k_0}\Big),\\
	&\xi\in\sigma^{-(k_1+1)}(\Gamma_{k_0}(z)),\quad
	\mu\left(\Gamma_{k_0}(z)\right)>0.
\end{split}\]	 
But this implies  $(\xi,x,y)\in\mathcal{E}$.
\end{proof}

\begin{claim}
For every $\xi\in\Sigma_N^{+}$ the set
$U({\xi})\eqdef \{(x,y)\in\bS^1\times\bS^1\colon
	(\xi,x,y)\in\mathcal{E}\}$		 
is open in $\bS^1\times\bS^1$.
\end{claim}  

\begin{proof}
Fix $\xi\in\Sigma_N^{+}$. Given $x,y\in\bS^1$ such that $(\xi,x,y)\in\mathcal{E}$, there exist $z\in\bS^1$ and $k_0 ,k_1\in\mathbb{N}$ satisfying
\[\begin{split}
    &f_{\xi}^{k_1 }(x),f_{\xi}^{k_1 }(y)
    	\in \Big(z-\frac{1}{k_0},z+\frac{1}{k_0}\Big),\\  
    &\xi\in\sigma^{-k_1}\Big( \Gamma_{k_0}(z)\Big),
    \quad 
    \mu\big( \Gamma_{k_0}(z)\big)>0.
\end{split}\]
The continuity of $f_{\xi}^{k_1 }$ implies that the set
\[
	V_{(x,y)}
	\eqdef \Big(f_{\xi}^{k_1}\Big)^{-1}\Big(z-\frac{1}{k_0},z+\frac{1}{k_0}\Big)
		\times \Big(f_{\xi}^{k_1}\Big)^{-1}
			\Big(z-\frac{1}{k_0},z+\frac{1}{k_0}\Big)
\]			
is an open neighborhood of $(x,y)$ in $\bS^1\times\bS^1$. For every $(w_1,w_2)\in V_{(x,y)}$, it holds $f_{\xi}^{k_1}(w_1)$, $f_{\xi}^{k_1}(w_2)\in \left(z-1/k_0,z+1/k_0 \right)$. Thus, $V_{(x,y)}\subset U(\xi)$. 
This proves the claim.
\end{proof}

By Proposition \ref{P2}, for every $x,y\in\bS^1$ we have 
$$\left(\mu\otimes \delta_{(x,y)}\right)(\mathcal{E})=1.$$
Therefore, by definition of the sets $\Gamma_{k_0}(z)$ for every $x,y\in\bS^1$ and almost every $\xi\in\Sigma_N^{+}$ there exists $k_1\in\mathbb{N}$ such that
\[
   Z_{n+k_1 ,d}^{x,y}(\xi)
   =d(f_{\xi}^{k_1 +n}(x),f_{\xi}^{k_1 +n}(y))
   \leq \lambda^n.
\]
This proves the lemma.
\end{proof}

The following is an immediate consequence of Lemma \ref{lemcontxy} by the dominated convergence theorem.
 
\begin{corollary}\label{correm:5:7}
	Assume that $(\cF,\bp,d)$ and $\lambda\in(0,1)$ are as in Proposition \ref{P1}. Then $(\cF,\bp,d)$ is SA (and hence LECA).
\end{corollary}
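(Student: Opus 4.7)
The plan is to deduce SA directly from Lemma \ref{lemcontxy} by invoking the dominated convergence theorem, using the fact that the circle metric $d$ is uniformly bounded on $\bS^1\times\bS^1$.

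More precisely, I would fix arbitrary points $x,y\in\bS^1$. By Lemma \ref{lemcontxy}, the set $\Omega^{x,y}$ defined in \eqref{Omegaxy} satisfies $\mu(\Omega^{x,y})=1$. For any $\xi\in\Omega^{x,y}$ there exists a constant $C=C(\xi)>0$ such that $Z_{n,d}^{x,y}(\xi)\le C\lambda^n$ for every $n\in\bN$, and in particular
\[
	\lim_{n\to\infty} Z_{n,d}^{x,y}(\xi) = 0
	\quad\text{for $\mu$-almost every }\xi\in\Sigma_N^+.
\]

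Next, I would observe that since $d$ is the usual metric on $\bS^1$, one has $Z_{n,d}^{x,y}(\xi)\le \diam(\bS^1)=1/2$ uniformly in $n$ and in $\xi$. Thus the sequence $(Z_{n,d}^{x,y})_{n\in\bN}$ of non-negative random variables on $(\Sigma_N^+,\mu)$ converges almost surely to $0$ and is dominated by the constant function $1/2\in L^1(\mu)$. The dominated convergence theorem yields
\[
	\lim_{n\to\infty}\bE(Z_{n,d}^{x,y}) = 0.
\]
Since $x,y\in\bS^1$ were arbitrary, this is exactly the SA property. Finally, SA implies LECA by Lemma \ref{LECAimpl}, completing the proof.

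There is no real obstacle here, as the hard work was already carried out in Lemma \ref{lemcontxy}: the lemma promotes the pointwise-proximal hypothesis (existence, for each pair $(x,y)$, of some $\xi$ along which orbits approach) to the much stronger assertion that along $\mu$-a.e. sequence the orbits of $x$ and $y$ are exponentially contracted at a uniform rate $\lambda$. Once this is established, the passage from almost-sure exponential contraction to $L^1$-convergence is routine because $\bS^1$ has finite diameter, which is precisely why the corollary is presented as ``an immediate consequence of Lemma \ref{lemcontxy} by the dominated convergence theorem.''
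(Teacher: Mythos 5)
Your argument is exactly the paper's: the authors state the corollary as an immediate consequence of Lemma \ref{lemcontxy} via the dominated convergence theorem, and your write-up simply spells out that almost-sure exponential decay of $Z_{n,d}^{x,y}$ plus the uniform bound $\diam(\bS^1)$ gives $\bE(Z_{n,d}^{x,y})\to 0$, i.e.\ SA, and then LECA by Lemma \ref{LECAimpl}. No gaps; this matches the intended proof.
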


In the rest of this section, $\mathcal{F}$ is a finite family of $C^1$-diffeomorphisms. Hence, there exist $L>1$ such that for all $i\in\{0,\ldots, N-1\}$
\begin{align}\label{abeexisten}
    	L^{-1} d(x,y)
	\leq d(f_i(x),f_i(y))
	\leq L d(x,y).
\end{align}

\begin{lemma}\label{nderivative}
Assume that $(\cF,\bp,d)$ and $\lambda\in(0,1)$ are as in Proposition \ref{P1}. 
Assume also that each map in $\mathcal{F}$ is a $C^1$-diffeomorphism.
Then, for every $t\in(\lambda,1)$ and $x\in\bS^1$ and almost every $\xi$ there exists an open neighborhood $J_x(\xi)\subset\bS^1$ of $x$ such that there exists $C>0$ satisfying for all $ n\in\bN$ we have
\[
	\max_{z\in J_x(\xi)}\lvert(f_{\xi}^n)'(z)\rvert
	\leq Ct^n.
\]	
\end{lemma}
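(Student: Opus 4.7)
The plan is to combine the exponential contraction on $I_x(\xi)$ from Proposition \ref{P1} with a bounded distortion estimate and the mean value theorem. First I would fix $t\in(\lambda,1)$ and set $\varepsilon\eqdef\log(t/\lambda)>0$. Since each $f_i$ is a $C^1$-diffeomorphism of the compact space $\bS^1$, the function $\log\lvert f_i'\rvert$ is bounded in absolute value by some $M>0$ and is uniformly continuous. I would use this to extract a single $\delta>0$ (depending only on $t$ and $\cF$) such that $d(a,b)<\delta$ implies $\lvert\log\lvert f_i'(a)\rvert-\log\lvert f_i'(b)\rvert\rvert<\varepsilon$ for every $i\in\{0,\ldots,N-1\}$, and then pick $k_0\in\bN$ with $\lambda^{k_0}<\delta$.

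Next, for $x\in\bS^1$ and $\xi$ in the full-measure set on which Proposition \ref{P1} applies, I would set $J_x(\xi)\eqdef I_x(\xi)$. By the chain rule and the key estimate $d(f^{k-1}_\xi(z),f^{k-1}_\xi(w))\le\lambda^{k-1}$ furnished by Proposition \ref{P1} for all $z,w\in J_x(\xi)$, each term of
\[
\log\frac{\lvert(f^n_\xi)'(z)\rvert}{\lvert(f^n_\xi)'(w)\rvert}
=\sum_{k=1}^n\log\frac{\lvert f'_{\xi_k}(f^{k-1}_\xi(z))\rvert}{\lvert f'_{\xi_k}(f^{k-1}_\xi(w))\rvert}
\]
with $k>k_0$ has absolute value at most $\varepsilon$, while the first $k_0$ terms are each bounded by $2M$. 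This yields the distortion estimate
\[
\lvert(f^n_\xi)'(z)\rvert\le A\,e^{n\varepsilon}\,\lvert(f^n_\xi)'(w)\rvert,
\qquad z,w\in J_x(\xi),\ n\ge k_0,
\]
with $A\eqdef e^{2Mk_0-k_0\varepsilon}$ depending only on $t$ and $\cF$.

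Finally, since $f^n_\xi$ is a $C^1$-diffeomorphism of $\bS^1$, its restriction to $J_x(\xi)$ is strictly monotone, and the image $f^n_\xi(J_x(\xi))$ is an arc whose arc-length (once $n$ is large enough that $\lambda^n$ is smaller than the injectivity scale) equals its $d$-diameter, which Proposition \ref{P1} bounds by $\lambda^n$. The mean value theorem then produces a point $w_n\in J_x(\xi)$ with $\lvert(f^n_\xi)'(w_n)\rvert\le\lambda^n/\lvert J_x(\xi)\rvert$, and combining with the distortion estimate gives
\[
\lvert(f^n_\xi)'(z)\rvert
\le\frac{A}{\lvert J_x(\xi)\rvert}\,(\lambda e^\varepsilon)^n
=\frac{A}{\lvert J_x(\xi)\rvert}\,t^n
\]
for $z\in J_x(\xi)$ and all sufficiently large $n$. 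The finitely many remaining values of $n$ are absorbed using the crude bound $\lvert(f^n_\xi)'\rvert\le L^n$ from \eqref{abeexisten}, giving a single constant $C=C(\xi)$ for which the claim holds.

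The main obstacle to watch for is that $C^1$ regularity gives only a continuous, not H\"older, modulus of continuity for $\log\lvert f_i'\rvert$, so the usual bounded distortion argument of summing $\omega(\lambda^k)$ over $k$ need not be finite. The decisive point is that the statement allows the slack $t>\lambda$: the per-step error $\varepsilon$ aggregates into the linear factor $e^{n\varepsilon}$, which is exactly absorbed by trading $\lambda^n$ for $(\lambda e^\varepsilon)^n=t^n$; the role of $k_0$ is merely to isolate the finitely many initial iterates where contraction has not yet forced distances below the uniform continuity scale $\delta$.
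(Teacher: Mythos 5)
Your proof is correct, and it rests on the same three ingredients as the paper's: the exponential contraction of Proposition \ref{P1} on the neighborhood $I_x(\xi)$, a bounded-distortion estimate for $\log\lvert(f_{\xi}^n)'\rvert$ along the fiber, and the mean value theorem converting the contraction of the image arc into a derivative bound at some point of the interval. The difference lies in how the distortion sum is controlled. The paper fixes $\varepsilon$ so that the integrated modulus of continuity $\int\omega_{\xi}(\varepsilon)\,d\mu(\xi)$ is at most $\log\frac{\lambda+t}{2}-\log\lambda$, invokes the Birkhoff ergodic theorem to bound $\frac1n\sum_{j=0}^{n-1}\omega_{\sigma^j(\xi)}(\varepsilon)$ for a.e.\ $\xi$, and shrinks the neighborhood to $I_x(\xi,\varepsilon)$ so that \emph{every} iterate, including the first $k$ with $\lambda^k\ge\varepsilon$, stays within $\varepsilon$. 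You instead exploit that $\cF$ is a finite family of $C^1$-diffeomorphisms of a compact space to take a single uniform modulus of continuity, bound the initial block of $k_0$ iterates crudely by $2Mk_0$ and absorb it into the constant, and keep $J_x(\xi)=I_x(\xi)$; this removes the ergodic theorem entirely and requires no full-measure set beyond that of Proposition \ref{P1}, at the (irrelevant here) cost of generality in settings where only a $\xi$-dependent, merely integrable modulus would be available. Your accounting is consistent: the per-step error $\varepsilon=\log(t/\lambda)$ turns $\lambda^n$ into exactly $t^n$, and the finitely many exceptional $n$ (those before the distortion and arc-length estimates kick in) are absorbed via the Lipschitz bound $L^n$ from \eqref{abeexisten}. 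Only one pedantic point deserves a sentence in a write-up: if $I_x(\xi)$ were not a proper arc you should first shrink $J_x(\xi)$ to one, so that the lift is monotone and, once $\lambda^n<1/2$, the length of $f_{\xi}^n(J_x(\xi))$ coincides with its $d$-diameter; you already gesture at this with the ``injectivity scale'' remark.
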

\begin{proof}
Fix $t\in(\lambda,1)$ and $x\in\bS^1$. By Proposition \ref{P1}, for almost every $\xi\in\Sigma_N^{+}$ there exits an open arc $I_x(\xi)\subset\bS^1$ of $x$ such that for every $n\in\mathbb{N}$ and $y,z \in I_x(\xi)$
\begin{align}\label{cedilcedil1}
 	Z_{n,d}^{y,z}(\xi)\leq \lambda^n.
\end{align}
Denote by $\omega_{\xi}(\cdot)$ the modulus of continuity of $\log |f'_{\xi_1}|$. Since $\omega_{\xi}(\varepsilon)$ tends to $0$ as $\varepsilon\to 0$ and is uniformly bounded, by dominated convergence it follows
\[
	\lim_{\varepsilon\to 0}\int \omega_{\xi}(\varepsilon) d\mu(\xi)=0.
\]	
Fix $\varepsilon>0$ such that
\[
	\int \omega_{\xi}(\varepsilon) d\mu(\xi)
	\leq \log \frac{\lambda+t}{2} -\log \lambda. 
\]	
By Birkhoff ergodic theorem, almost every $\xi\in\Sigma_N^{+}$
\begin{align}\label{cedilcedil2}
    \lim_{n\to\infty}\frac{1}{n}\sum_{j=0}^{n-1}\omega_{\sigma^j(\xi)}(\varepsilon)
    = \int \omega_{\eta}(\varepsilon) d\mu(\eta).
\end{align}
Now, fix $\xi\in\Sigma_N^{+}$ such that \eqref{cedilcedil1} and \eqref{cedilcedil2} hold. Take $k\geq 1$ so that $\lambda^k<\varepsilon$. Define
\[ 
	I_x(\xi,\varepsilon)
	\eqdef  I_x(\xi)\cap\bigcap_{j=0}^{k} 
	(f^j_{\xi})^{-1}\left(f^j_{\xi}(x)-\frac{\varepsilon}{2},f^j_{\xi}(x)+\frac{\varepsilon}{2}\right),
\]	
and note that $ I_x(\xi,\varepsilon)$ is an open arc containing $x$. For every $y, z\in I_x(\xi,\varepsilon)$ it holds
\[
    \log\frac{\lvert(f_{\xi}^n)'(y) \rvert}{\lvert (f_{\xi}^n)'(z) \rvert} 
    =\sum_{j=0}^{n-1} \log\,\lvert f'_{\sigma^j(\xi)}(f^j_{\xi}(y)) \rvert 
    	-\log\,\lvert f'_{\sigma^j(\xi)}(f^j_{\xi}(z)) \rvert 
    \leq \sum_{j=0}^{n-1}\omega_{\sigma^j(\xi)}(\varepsilon).
\]
Let $x_1$ and $x_2$ be the extreme points of $I_x(\xi,\varepsilon)$. Note that for every $z\in I_x(\xi,\varepsilon)$, it holds
\[
	\frac{\log\,\lvert(f_{\xi}^n)'(z)  \rvert}n
	\leq\frac{1}{n}\log\left(\frac{Z_{n,d}^{x_1,x_2}}{d(x_1,x_2)}\right)
		+\frac{1}{n}\sum_{j=0}^{n-1}\omega_{\sigma^j(\xi)}(\varepsilon).
\]		
Hence, using \eqref{cedilcedil1} for all $n\geq 1$
\[
	\frac{1}{n}\log\left(\max_{z\in I_x(\xi,\varepsilon) }\lvert (f_{\xi}^n)'(z)  \rvert \right)	\leq\log \lambda-\frac{1}{n}\log d(x_1,x_2)+\frac{1}{n}\sum_{j=0}^{n-1}\omega_{\sigma^j(\xi)}(\varepsilon),
\]	
so that
\[
	\limsup_{n\to\infty}\frac{1}{n}
	\log\left(\max_{z\in I_x(\xi,\varepsilon) }
	\lvert (f_{\xi}^n)'(z)  \rvert \right)\leq\log \frac{\lambda+t}{2}.
\]	
Then, there exists $C>0$ such that for all $n\geq 1$
$$\max_{z\in I_x(\xi,\varepsilon)}|(f_{\xi}^n)'(z)|\leq Ct^n.$$
This proves the lemma.
\end{proof}

The following result together with Proposition \ref{prolem1:1} immediately implies Theorem \ref{teo3.1}.

\begin{proposition}\label{prolemmaexisteee}
Assume the hypotheses of Theorem \ref{teo3.1}. Then there exist  $\alpha\in(0,1)$, $\lambda\in(0,1)$, and $k\in\bN$ such that $(\cF,\bp,d^\alpha)$ is $k$-ECA with contraction rate $\lambda$.
\end{proposition}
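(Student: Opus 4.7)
The plan is to combine the uniform-in-time derivative decay of Lemma \ref{nderivative} with an Egorov-type covering argument on $\bS^1$ to obtain a set $E\subset\Sigma_N^+$ of $\mu$-measure close to one on which the random iterates $f_\xi^n$ contract $d$-distances uniformly across all of $\bS^1$. The complement $E^c$ is then controlled by the crude Lipschitz bound \eqref{abeexisten}, and passing from $d$ to $d^\alpha$ for a suitably small $\alpha\in(0,1)$ absorbs this error.

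Fix $t\in(\lambda,1)$ with $\lambda$ from Proposition \ref{P1}. By Lemma \ref{nderivative}, for every $x\in\bS^1$ and $\mu$-a.e.\ $\xi$ there exist $r_x(\xi)>0$ and $C_x(\xi)<\infty$ with $\max_{z\in B(x,r_x(\xi))}|(f_\xi^n)'(z)|\le C_x(\xi)\,t^n$ for all $n\ge 1$. Given $\delta'>0$, Egorov applied to the a.s.\ finite random variables $1/r_x$ and $C_x$, together with compactness of $\bS^1$, yields finitely many points $x_1,\ldots,x_N$ with $\{B(x_i,r_0/2)\}$ covering $\bS^1$, common constants $r_0,C_0>0$, and a measurable set $E\subset\Sigma_N^+$ with $\mu(E)\ge 1-N\delta'$ on which $\max_{z\in B(x_i,r_0)}|(f_\xi^n)'(z)|\le C_0\,t^n$ for every $i$ and $n\ge 1$. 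A chain argument (subdividing the shorter arc between any $y,z\in\bS^1$ into segments of length $\le r_0/2$, each contained in some $B(x_i,r_0)$, and applying the mean value theorem piecewise) upgrades this to the global estimate $d(f_\xi^n(y),f_\xi^n(z))\le C_0\,t^n\,d(y,z)$ on $E$, valid for all $y,z\in\bS^1$ and $n\ge 1$.

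For any $\alpha\in(0,1]$, splitting the expectation and using \eqref{abeexisten} on $E^c$ gives
\[
\bE\bigl(Z^{x,y}_{n,d^\alpha}\bigr)\le\bigl[(C_0 t^n)^\alpha+L^{n\alpha}\,N\delta'\bigr]\,d^\alpha(x,y).
\]
Setting $\gamma:=\log L/\log(1/t)$, I would first choose $\delta'>0$ so small that $N(\delta')\delta'<2^{-1-\gamma}$, then take $n$ large and $\alpha\in(0,1)$ with $n\alpha\log(1/t)=\log 2$ (so that $t^{n\alpha}=1/2$ and $L^{n\alpha}=2^\gamma$); since then $C_0^\alpha\to 1$ as $n\to\infty$, one has $(C_0 t^n)^\alpha\to 1/2$, and for $n$ large enough the bracket is strictly less than $1$. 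This gives the $k$-ECA property with $k=n$ and the corresponding contraction rate $\lambda\in(0,1)$.

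The main obstacle is this last parameter balance, in particular arranging that $N(\delta')\delta'$ be made smaller than $2^{-1-\gamma}$ for some $\delta'>0$. This requires care in how the cover is extracted in the first step, jointly adjusting the Egorov thresholds and the covering radius so that the dependence of $N$ on $\delta'$ can be controlled; the rest of the argument is a clean split of the expectation and a concrete choice of $n$ and $\alpha$.
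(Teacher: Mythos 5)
Your key intermediate claim --- a set $E\subset\Sigma_N^+$ of measure $\ge 1-N\delta'$ on which $\max_{z\in\bS^1}\lvert(f_\xi^n)'(z)\rvert\le C_0t^n$ (hence $Z^{y,z}_{n,d}(\xi)\le C_0t^nd(y,z)$ for \emph{all} $y,z\in\bS^1$) --- cannot hold with $\mu(E)>0$. Every $f_\xi^n$ is a diffeomorphism of $\bS^1$ onto itself, so $\int_{\bS^1}\lvert(f_\xi^n)'(z)\rvert\,dz=1$ and therefore $\max_{z\in\bS^1}\lvert(f_\xi^n)'(z)\rvert\ge1$ for every $\xi$ and every $n$. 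As soon as $C_0t^n<1$, no $\xi$ at all satisfies your uniform bound over a cover of the whole circle, so $E=\varnothing$; since $\mu\bigl(\bigcap_iE_i\bigr)\ge1-N\delta'$, this forces $N(\delta')\delta'\ge1$ for \emph{every} choice of $\delta'$ and of the finite cover. The ``parameter balance'' you flag as the main obstacle (getting $N(\delta')\delta'<2^{-1-\gamma}$) is thus not a technical delicacy but an impossibility: the neighborhood $J_x(\xi)$ in Lemma \ref{nderivative} genuinely depends on $\xi$, because the region where $f_\xi^n$ expands (which must exist) moves with $\xi$, and no Egorov/compactness argument can make the contraction simultaneously uniform in space on a positive-measure set of $\xi$. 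A confirming symptom is that your argument never uses proximality or Lemma \ref{lemcontxy}; but proximality is indispensable --- e.g.\ if all maps commute with the rotation by $1/2$ (which is compatible with having no common invariant measure), then $Z^{x,x+1/2}_{n,d}\equiv1/2$, so no $k$-ECA statement for any power $d^\alpha$ can hold, and any proof avoiding proximality must break down.

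The paper avoids exactly this trap by never asking for spatially uniform almost-sure contraction. It sets $A_n\eqdef\sup_{x\ne y}\int\log\bigl(Z^{x,y}_{n,d}(\xi)/d(x,y)\bigr)\,d\mu(\xi)$, which is subadditive, and shows via Fekete's lemma, compactness of $\bS^1\times\bS^1$ and Fatou that $\lim_nA_n/n\le\log\lambda$: for a sequence of near-optimizing pairs $(x_{n_k},y_{n_k})\to(x,y)$ it uses Lemma \ref{lemcontxy} (where proximality and Proposition \ref{P2} enter, giving $Z^{x,y}_{n,d}(\xi)\le C\lambda^n$ for a.e.\ $\xi$ with the exceptional set depending on the pair) together with Lemma \ref{nderivative} near $x$ and $y$ when $x\ne y$, and Lemma \ref{nderivative} alone when $x=y$. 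The passage to $d^\alpha$ is then a moment estimate, $e^s\le1+s+\tfrac{s^2}{2}e^{\lvert s\rvert}$ with $\lvert s\rvert\le n\alpha\log L$, so that $\bE\bigl(Z^{x,y}_{n,d^\alpha}/d^\alpha(x,y)\bigr)\le1+\alpha A_n+\tfrac{\alpha^2}{2}(n\log L)^2L^n<1$ for suitable $n$ and small $\alpha$ --- quite different from, and not recoverable by, your split over a fixed good set of sequences. If you want to salvage your strategy, the uniformity you need must be taken over \emph{pairs of points} in expectation (as in $A_n$), not over $\xi$ on a large-measure event.
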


\begin{proof}Fix points $x,y\in\bS^1$.
Let 
\[
	A_n
	\eqdef \sup_{x\neq y}\int_{\Sigma_N^{+}}
		\log\left(\frac{Z_{n,d}^{x,y}(\xi)}{d(x,y)} \right)d\mu(\xi) .
\]	
Note that $(A_n)_{n\geq 0}$ is a subadditive sequence. Hence, by Fekete's Lemma, the limit $A\eqdef \lim _{n\to \infty } A_{n}/n=\inf_{n\ge1} A_{n}/n\in[-\infty,\infty)$ exists.

All hypotheses of Proposition \ref{P1} are satisfied and we can consider $\lambda\in(0,1)$ as provided by this proposition.

\begin{claim}\label{claclaclacla}
	$A\leq \log \lambda$.
\end{claim}

\begin{proof}
Arguing by contradiction, suppose that $\log \lambda<A$. 
Then, for all $n\in\bN$ 
\[
	\log\lambda
	< A
	\le \sup_{x\neq y}\int_{\Sigma_N^{+}}F_n(x,y,\xi)
		\,d\mu(\xi),
	 \quad\text{ where }\quad
	F_n(x,y,\xi)
	\eqdef \frac{1}{n}\log\left(\frac{Z_{n,d}^{x,y}(\xi)}{d(x,y)} \right). 
\]		
Thus, for all $n\in\bN$ there exist $x_n,y_n$ in $\bS^1$, $x_n\neq y_n$, such that 
\begin{equation}\label{seqdepontos}
    A
    \le \int_{\Sigma_N^{+}}F_n(x_n,y_n,\xi)\,
    	d\mu(\xi).
\end{equation}
By compactness, there exist a subsequence $(n_k)_{k\geq 1}$ and points $x,y\in\bS^1$ such that 
\begin{equation}\label{eq:convergence}
	\lim_{k\to\infty}x_{n_k}=x,
	\quad\quad
	\lim_{k\to\infty}y_{n_k}=y.
\end{equation}
In the following two cases we consider $\xi$ in a appropriate set of measure $1$ to obtain that the limit superior of $F_{n_k}(x_{n_k},y_{n_k},\xi)$ as $k\to\infty$  is less than or equal to $\log \lambda$. We then will apply Fatou's Lemma to contradict \eqref{seqdepontos}. 
Note that all hypotheses of Lemmas \ref{lemcontxy} and \ref{nderivative} are satisfied. Given $x,y$ as above, let $\Omega^{x,y}$ be as in \eqref{Omegaxy}.

\medskip\noindent
\textbf{Case $x\neq y$.} Fix any $t\in(\lambda,1)$. Denote by $\Gamma_t$ the set of sequences $\xi\in\Sigma_N^{+}$ such that there exist 
$C>0$ and open arcs $J_x(\xi)$ and $J_y(\xi)$ containing $x$ and $y$, respectively, and satisfying
\[
	\max_{z\in J_x(\xi)\cup J_y(\xi)}\lvert(f_{\xi}^n)'(z)\rvert
	\leq Ct^n, \quad Z^{x,y}_{n,d}(\xi)\leq C\lambda^n \text{ for all } n\in\bN
\]
By Lemma \ref{lemcontxy} and Lemma \ref{nderivative}, the set $\Gamma_t$ has measure 1. Given $\xi\in\Gamma_t$, by the triangle inequality and the mean value inequality it follows
\begin{align*}
    Z_{n_k,d}^{x_{n_k},y_{n_k}}(\xi)
    &\leq Z_{n_k,d}^{x_{n_k},x}(\xi) +Z_{n_k,d}^{x,y}(\xi)+Z_{n_k,d}^{y,y_{n_k}}(\xi)\\
    &= d(f_\xi^{n_k}(x_{n_k}),f_\xi^{n_k}(x))
    	+d(f_\xi^{n_k}(x),f_\xi^{n_k}(y))
	+d(f_\xi^{n_k}(y),f_\xi^{n_k}(y_{n_k}))\\
    &\leq \lvert(f_{\xi}^{n_k})'(\hat x_k)\rvert d(x_{n_k},x)+Z_{n,d}^{x,y}(\xi)
    	+\lvert(f_{\xi}^{n_k})'(\hat y_k)\rvert d(y_{n_k},y),
\end{align*}
for some points $\hat x_k$ and $\hat y_k$ between $x_{n_k}$ and $x$ and between $y_{n_k}$ and $y$, respectively. By \eqref{eq:convergence}, $\hat x_k\to x$ and $\hat y_k\to y$ as $k\to\infty$. Then, for $k$ large enough $\hat x_k\in J_x(\xi)$ and $\hat y_k\in J_y(\xi)$ and it follows
\[
    Z_{n_k,d}^{x_{n_k},y_{n_k}}(\xi)  
    \leq Ct^{n_k} d(x_{n_k},x)
    	+C\lambda^{n_k}
		+Ct^{n_k} d(y_{n_k},y).
\]
This implies
\[
	\limsup_{k\to\infty}F_{n_k}(x_{n_k},y_{n_k},\xi)
	\leq \log t.
\]	
Given $L>1$ satisfying \eqref{abeexisten}, as
\[
	-\log L
	\leq F_{n_k}(x_{n_k},y_{n_k},\xi)
	\leq \log t
	<0,
\]	
Fatou's Lemma implies 
\[
    \limsup_{k\to \infty}\int  F_{n_k}(x_{n_k},y_{n_k},\xi)\,
	d\mu(\xi)
  	\leq \int \limsup_{k\to \infty}F_{n_k}(x_{n_k},y_{n_k},\xi)\,
	d\mu(\xi)
    \leq \log t.
\]
As $t\in(\lambda,1)$ was arbitrary, it follows
\[
	 \limsup_{k\to \infty}\int  F_{n_k}(x_{n_k},y_{n_k},\xi)d\mu(\xi)
	\leq \log \lambda,
\]	
which contradicts \eqref{seqdepontos}.

\medskip\noindent
\textbf{Case $x=y$.} By Lemma \ref{nderivative}, for every $t\in(\lambda,1)$ and  $x\in\bS^1$ and for almost every $\xi$ there exist an open neighborhood $J_x(\xi)\subset\bS^1$ of $x$ and a constant $C>0$ such that for all $ n\in\mathbb{N}$ it holds
\begin{align}\label{derderder}
  \max_{z\in J_x(\xi)}|(f_{\xi}^n)'(z)|\leq C t^n.  
\end{align}
Fix $\xi\in\Sigma_N^{+}$ such that \eqref{derderder} holds. Take $k_1\in\bN$ large enough so that $x_{n_k}$ and $y_{n_k}$ are both $J_x(\xi)$, for all $k\geq k_1$. By the mean value inequality,
\[
	F_{n_k}(x_{n_k},y_{n_k},\xi)
	\leq \frac{1}{n_k}\log C+ \log t,
\]	
for all $k\geq k_1 $. Thus,
\[
	\limsup_{k\to\infty}F_{n_k}(x_{n_k},y_{n_k},\xi)
	\leq \log t.
\]	
By Fatou's lemma and using again that $t\in(\lambda,1)$ was arbitrary, it follows
\begin{equation*}
    \limsup_{k\to\infty} \int F_{n_k}(x_{n_k},y_{n_k},\xi)
    \,d\mu(\xi)
    \leq \int \limsup_{k\to\infty}F_{n_k}(x_{n_k},y_{n_k},\xi)
    \, d\mu(\xi)
    \leq \log \lambda,
\end{equation*}
which contradicts \eqref{seqdepontos}.

This proves the claim.
\end{proof}

By Claim \ref{claclaclacla}, for every $n\in\bN$ sufficiently large,
\[
	\frac1nA_n
	< \frac{1}{2}\log \lambda<0.
\]	
Consider again $L>1$ as in \eqref{abeexisten}. 
Hence, for every $n\in\bN$, $\xi$, and $x\ne y$ 
\[
	-n\log L
	\le \left\lvert\log\frac{d(f_{\xi}^n(x),f_{\xi}^n(y))}{d(x,y)}\right\rvert
	\le n\log L
\]
Using the above and that $e^x\leq 1+x+x^2e^{|x|}/2$, for every $\alpha\in(0,1)$ it follows that
\begin{align*}
    \int_{\Sigma_N^{+}}&\displaystyle
    	\frac{d^{\alpha}(f_{\xi}^n(x),f_{\xi}^n(y))}{d^{\alpha}(x,y)}\,d\mu(\xi)
    =\int_{\Sigma_N^{+}}e^{\alpha\log(d(f_{\xi}^n(x),f_{\xi}^n(y))/d(x,y))}	
    	\,d\mu(\xi)\\
    &\leq \int_{\Sigma_N^{+}}
    	\Big(1
		+\alpha \log\frac{d(f_{\xi}^n(x),f_{\xi}^n(y))}{d(x,y)}
		+\\
	&\phantom{\le\int_{\Sigma_N^+}\Big(1}		
		+\frac{\alpha^2}{2}
		\log^2\left(\frac{d(f_{\xi}^n(x),f_{\xi}^n(y))}{d(x,y)}\right)
		e^{\lvert \log(d(f_{\xi}^n(x),f_{\xi}^n(y))/d(x,y))\rvert }\Big)
		\, d\mu(\xi)\\
	&\le 1 +\alpha\int_{\Sigma_N^{+}}	\log\left(\frac{Z_{n,d}^{x,y}(\xi)}{d(x,y)} \right)d\mu(\xi)
		+\frac{\alpha^2}{2}(n\log L)^2L^n
	\\
    &\le 1+\alpha A_n+\frac{\alpha^2}{2} (n\log L)^2 L^n
    < 1+\frac{\alpha}{2} n\log \lambda+\frac{\alpha^2}{2} (n\log L)^2 L^n.
\end{align*}
Now taking $\alpha\in(0,1)$ sufficiently small, the right hand side provides a contraction rate in $(0,1)$.
This implies the assertion.
\end{proof}

\begin{proof}[Proof of Theorem \ref{teo3.1}]
 Let $\alpha\in(0,1)$, $\lambda\in(0,1)$, and $k\in\bN$ be as in Proposition \ref{prolemmaexisteee} so that $(\cF,\bp,d^\alpha)$ is $k$-ECA with rate $\lambda$. 
By Proposition \ref{prolem1:1}, $(\cF,\bp,D)$  is CA with contraction rate $\lambda^{1/k}$, where 
\[
	D(x,y)
	\eqdef d^\alpha(x,y)
		+\frac{1}{\lambda^{1/k}}\bE(Z_{1,d^\alpha}^{x,y})
		+\cdots+\frac{1}{\lambda^{(k-1)/k}}\bE (Z_{k-1,d^{\alpha}}^{x,y}) 
\] 
is as in \eqref{metricaECA} for $d^\alpha$ instead of $d$. By Lemma \ref{lemrem:strequ}, it holds $d\le D\le Cd^\alpha$ for some $C>0$ and hence $D$ is strongly equivalent to $d^\alpha$. 
\end{proof}

%---------------------------------------------------------------------------
\section{Examples}\label{sec:6}
%---------------------------------------------------------------------------

The following examples illustrate that the hypotheses in Theorems \ref{pro:1} and \ref{teo3.1} are sharp. We consider $\bS^1$ equipped with the usual metric $d(x,y)\eqdef\min\{\lvert x-y\rvert,1-\lvert x-y\rvert\}$.

%---------------------------------------------------------------------------
\subsection{LECA, but not ESCA}\label{sec:exe1}
%---------------------------------------------------------------------------

Let $\cF=\{f_0,f_1\}$ be the family of two diffeomorphisms of $\bS^1$ such that $f_0$ has two fixed points, one attracting and one repelling, and $f_1$ is an irrational rotation. Note that $(\cF,d)$ is proximal.
Let $ \bp=(p,1-p)$ be a non-degenerate probability vector. Note that $(\cF,\bp,d)$ fails to be NEA.
The triple  $(\cF,\bp,d)$ satisfies the hypotheses of Theorem \ref{teo3.1} (and hence of Proposition \ref{P1}). Hence, by Corollary \ref{correm:5:7}, $(\cF,\bp,d)$ is SA and LECA.
It is easy to check that for $(\cF,\bp,d)$ condition \eqref{eq2} for NEA and condition \eqref{eq01} for $k$-ECA ($k\in\bN$ arbitrary) are violated at the repelling fixed point of $f_0$. 

Note that $(\cF,\bp,d)$ verifies the hypotheses of Theorem \ref{teo3.1}. Hence, there  exist $\alpha\in(0,1]$, $\lambda\in(0,1)$ and $n\in\mathbb{N}$ such  that for $D\eqdef(d^{\alpha})_{n,\lambda}$ the triple $(\cF,\bp,D)$ is CA. In particular, $(\cF,\bp,D)$ satisfies  NEA, SA, LECA, and ESCA. 
However, by the latter together with Lemma \ref{Lem:1}, for all $\beta\in(0,1]$ the metric $D^{\beta}$ cannot be strongly equivalent to $d$.

Let $\mu$ be a stationary probability for IFS with probabilities $(\cF^{-1},\bp)$. 
By stationarity, it holds
\[
	\mu=p\, (f_0^{-1})_\ast\mu+(1-p)\,(f_1^{-1})_\ast\mu.
\]
By \cite[Lemma 2.6]{KatSten}, $\mu$ is non-atomic and has full support. Consider the metric $\rho$ on $\bS^1$, given by $\rho(x,y)\eqdef\min\{\mu([x,y]),\mu([y,x])\}$. By \cite[Proposition 1.2]{KatSten},  $(\cF,\bp,\rho)$ is NEA.

\begin{lemma}
	 $(\cF,\bp,\rho)$ is SA and LECA.
\end{lemma}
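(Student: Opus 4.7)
The plan is to reduce everything to two lemmas that have already been proved in the paper, namely Lemma \ref{lemrem:SA} and Lemma \ref{lem:2}. The key observation is that the preceding discussion has already established all hypotheses needed to invoke these lemmas for the metric $\rho$.

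First I would verify that $\rho$ is indeed a metric on $\bS^1$ which is topologically equivalent to $d$. The only nontrivial property is positivity, which follows from the fact that $\mu$ has full support: if $x \ne y$, then $[x,y]$ and $[y,x]$ both have positive $\mu$-mass. Topological equivalence then follows because $\mu$ is non-atomic (so $\rho(x_n,x) \to 0$ whenever $d(x_n,x) \to 0$) and has full support (so balls in $\rho$ contain arcs, hence balls in $d$). This part is essentially routine once we cite \cite[Lemma 2.6]{KatSten}.

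Next, I would invoke Corollary \ref{correm:5:7}, which tells us that since $(\cF,d)$ satisfies the hypotheses of Proposition \ref{P1} (proximality plus the absence of a commonly invariant measure, as in Theorem \ref{teo3.1}), the triple $(\cF,\bp,d)$ is SA. Combined with the fact (just established above) that $\rho$ is equivalent to $d$, and the fact that $(\cF,\bp,\rho)$ is NEA by \cite[Proposition 1.2]{KatSten}, all three hypotheses of Lemma \ref{lemrem:SA} are satisfied on the bounded (indeed compact) space $(\bS^1, d)$. Applying that lemma directly yields that $(\cF,\bp,\rho)$ is SA.

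Finally, LECA would follow immediately from Lemma \ref{lem:2}: since $\bS^1$ is compact and $(\cF,\bp,\rho)$ is NEA, on this space SA and LECA are equivalent, so SA for $(\cF,\bp,\rho)$ implies LECA for $(\cF,\bp,\rho)$. The main conceptual step is thus the verification of equivalence between $\rho$ and $d$; after that the proof reduces to citing two lemmas. I do not anticipate any serious obstacle: the properties of the stationary measure $\mu$ (non-atomic, full support) are precisely what is needed so that $\rho$ is an honest metric inducing the circle topology, and the remaining implications are packaged into the lemmas stated earlier in the paper.
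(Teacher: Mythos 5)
Your proposal is correct and follows essentially the same route as the paper: the paper likewise combines $(\cF,\bp,d)$ being SA (Corollary \ref{correm:5:7}) with NEA of $(\cF,\bp,\rho)$ from \cite[Proposition 1.2]{KatSten} and applies Lemma \ref{lemrem:SA} to get SA for $\rho$. The only cosmetic difference is that the paper deduces LECA directly from Lemma \ref{LECAimpl} (SA implies LECA, no compactness or NEA needed), whereas you invoke the stronger equivalence of Lemma \ref{lem:2}; both are valid.
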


\begin{proof}
Since $(\cF,\bp,d)$ is SA and $(\cF,\bp,\rho)$ is NEA, Lemma \ref{lemrem:SA} implies that $(\cF,\bp,\rho)$ is SA. By Lemma \ref{LECAimpl}, $(\cF,\bp,\rho)$ is LECA.
\end{proof}

The following result checking that the IFS is ``$\rho$-isometric in average'' if an only if it is ``$\rho$-isometric'' is straightforward.

\begin{claim}\label{cex1}
	Assume $\rho(x,y)=\mu([x,y])$. Then, it holds $\mathbb{E}(Z_{n,\rho}^{x,y})=\rho(x,y)$ if only if for all $\xi_1,\ldots,\xi_n$ we have
\[
     \rho(f_{\xi_1\ldots\xi_n}(x),f_{\xi_1\ldots\xi_n}(y))
     =\mu(f_{\xi_1\ldots\xi_n}([x,y])).
\]
\end{claim}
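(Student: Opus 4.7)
The plan is to expand $\mathbb{E}(Z_{n,\rho}^{x,y})$ as a finite weighted sum, bound each term by a pointwise inequality coming straight from the definition of $\rho$, and then identify the global upper bound with $\rho(x,y)$ via iterated stationarity of $\mu$. The equivalence in the claim will then follow as the saturation criterion for a finite sum of nonnegative terms with strictly positive weights.

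First I would iterate stationarity. Since $\mu$ is stationary for $(\cF^{-1},\bp)$, for every Borel set $A\subset\bS^1$ one has $\mu(A)=\sum_{i=0}^{N-1}p_i\mu(f_i(A))$, and a straightforward induction on $n$ gives
\begin{equation}\label{eq:planstat}
	\mu(A)=\sum_{\xi_1,\ldots,\xi_n}p_{\xi_1\ldots\xi_n}\mu(f_{\xi_1\ldots\xi_n}(A)).
\end{equation}
Applied to $A=[x,y]$ and combined with the standing hypothesis $\rho(x,y)=\mu([x,y])$, this reads
\[
	\rho(x,y)=\sum_{\xi_1,\ldots,\xi_n}p_{\xi_1\ldots\xi_n}\mu(f_{\xi_1\ldots\xi_n}([x,y])).
\]

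Next I would record the pointwise bound
\[
	\rho(f_{\xi_1\ldots\xi_n}(x),f_{\xi_1\ldots\xi_n}(y))\le\mu(f_{\xi_1\ldots\xi_n}([x,y])),
\]
which is immediate: since $f_{\xi_1\ldots\xi_n}$ is a homeomorphism of $\bS^1$, its image of the arc $[x,y]$ is one of the two arcs joining the images of the endpoints, whereas $\rho$ between those images is, by definition, the $\mu$-mass of the shorter of the two.

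Finally, expanding
\[
	\mathbb{E}(Z_{n,\rho}^{x,y})
	=\sum_{\xi_1,\ldots,\xi_n}p_{\xi_1\ldots\xi_n}
	\rho(f_{\xi_1\ldots\xi_n}(x),f_{\xi_1\ldots\xi_n}(y))
\]
and applying the pointwise bound term-by-term together with \eqref{eq:planstat}, one obtains $\mathbb{E}(Z_{n,\rho}^{x,y})\le\rho(x,y)$, which incidentally re-proves NEA. Non-degeneracy of $\bp$ makes every weight $p_{\xi_1\ldots\xi_n}$ strictly positive, so the global equality $\mathbb{E}(Z_{n,\rho}^{x,y})=\rho(x,y)$ is equivalent to saturation of the pointwise bound for every word $(\xi_1,\ldots,\xi_n)$, which is precisely the claim. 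No substantial obstacle is anticipated: the only small subtlety is recognizing that iterating the stationarity of $\mu$ for $(\cF^{-1},\bp)$ is exactly what converts the weighted sum of $\mu(f_{\xi_1\ldots\xi_n}([x,y]))$ back into $\mu([x,y])$.
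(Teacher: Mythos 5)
Your proof is correct; the paper itself omits the argument (it labels the claim ``straightforward''), and what you wrote is precisely the intended reasoning: iterate the stationarity $\mu(A)=\sum_i p_i\mu(f_i(A))$ of the $(\cF^{-1},\bp)$-stationary measure, use that each $f_{\xi_1\ldots\xi_n}([x,y])$ is one of the two arcs joining the image points so that $\rho(f_{\xi_1\ldots\xi_n}(x),f_{\xi_1\ldots\xi_n}(y))\le\mu(f_{\xi_1\ldots\xi_n}([x,y]))$, and conclude by saturation of a finite sum with strictly positive weights $p_{\xi_1\ldots\xi_n}$. Both directions of the equivalence follow exactly as you say, so there is nothing to add.
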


\begin{lemma}
The triple $(\cF,\bp,\rho)$  is not ESCA.
\end{lemma}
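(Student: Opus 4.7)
The plan is to exhibit, for every $x\in\bS^1$ and every $\ell\ge 1$, arbitrarily small pairs $(z,y)$ near $(x,x)$ with $z\ne y$ at which the ratio $\bE(Z_{\ell,\rho}^{z,y})/\rho(z,y)$ equals exactly $1$. This forces the supremum in \eqref{iip} to be $\ge 1$ on every open neighborhood of $(x,x)$ and for every $\ell\ge 1$, so ESCA fails (at every $x$, in fact). In other words, I intend to show that, for small arcs, $(\cF,\bp,\rho)$ is not merely non-expansive on average but \emph{isometric} on average for every iterate.

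The key algebraic input is iterated stationarity. From $\mu=p(f_0^{-1})_\ast\mu+(1-p)(f_1^{-1})_\ast\mu$ and $((f_i^{-1})_\ast\mu)(A)=\mu(f_i(A))$ for Borel $A\subset\bS^1$, an induction on $\ell$ gives
\[
\mu(A)=\sum_{\xi_1,\ldots,\xi_\ell}p_{\xi_1\ldots\xi_\ell}\,\mu\bigl(f_{\xi_1\ldots\xi_\ell}(A)\bigr).
\]
Applied to $A=[z,y]$, this identity would already give $\bE(Z_{\ell,\rho}^{z,y})=\rho(z,y)$, \emph{provided} that $\rho(z,y)=\mu([z,y])$ and $\rho(f_{\xi_1\ldots\xi_\ell}(z),f_{\xi_1\ldots\xi_\ell}(y))=\mu(f_{\xi_1\ldots\xi_\ell}([z,y]))$ for every word $\xi_1\ldots\xi_\ell$. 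That implication is exactly the easy direction of Claim~\ref{cex1}.

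The only non-algebraic step is therefore to secure these identifications for arbitrarily small arcs around $x$. Fix $\ell$ and $x$. Because $\mu$ is non-atomic and has full support by \cite[Lemma~2.6]{KatSten}, the arc-measure function is continuous and vanishes on degenerate arcs. Because the $\ell$-fold compositions $\{f_{\xi_1\ldots\xi_\ell}\}$ form a finite family of homeomorphisms of the compact circle, there exists $\delta=\delta(x,\ell)>0$ such that every sub-arc $I$ of $d$-diameter at most $\delta$ meeting the $d$-ball of radius $\delta$ around $x$ satisfies $\mu(f_{\xi_1\ldots\xi_\ell}(I))<1/2$ for every word; this is exactly the condition ensuring that $\rho$ of the endpoints coincides with $\mu$ of the image arc.

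To conclude: given any open neighborhood $V$ of $(x,x)$, pick distinct $z,y$ with $(z,y)\in V$ and both points within $\delta/4$ of $x$ in $d$, so that the shorter arc $[z,y]$ lies within the $\delta$-ball around $x$. Then the two identifications hold, the iterated stationarity identity delivers $\bE(Z_{\ell,\rho}^{z,y})=\mu([z,y])=\rho(z,y)$, and the ratio equals $1$. Thus $\sup_{(z,y)\in V,\,z\ne y}\bE(Z_{\ell,\rho}^{z,y})/\rho(z,y)\ge 1$ for every $\ell$ and every $V$, contradicting \eqref{iip}. I do not foresee any real obstruction: the heart of the argument is a one-line stationarity identity, and the only technical point---controlling the image arcs' $\mu$-mass---is a soft compactness/continuity argument made easy by the finiteness of words of length $\ell$ and the non-atomicity of $\mu$.
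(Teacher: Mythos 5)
Your proof is correct and follows essentially the same route as the paper: both arguments combine the stationarity of $\mu$ under $\cF^{-1}$ (via Claim \ref{cex1}) with the observation that, for pairs close enough to $(x,x)$, all $2^\ell$ word-images of the connecting arc have $\mu$-measure below $1/2$, so that $\rho$ agrees with the $\mu$-mass of the image arcs and the average $\ell$-step distance is exactly preserved. The only difference is cosmetic: the paper secures the smallness by intersecting preimages of arcs of $\mu$-measure $1/4$ around the images of $x$, while you use uniform continuity of the finitely many word-maps together with non-atomicity of $\mu$; both are sound.
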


\begin{proof}
Given $x\in\bS^1$ and $\varepsilon\in(0,1)$, denote by $\arc(x,\varepsilon)$ the open arc centered at $x$ and with $\mu$-measure equal to $\varepsilon$ (recall that $\mu$ is nonatomic and has full support, hence $\arc(x,\varepsilon)$ is a nontrivial interval).  
Given $x$ and $\ell\in\mathbb{N}$, consider the set
\[
	V_{\ell}(x)
	\eqdef \arc(x,4^{-1})\cap 
		\bigcap_{\xi_1,\ldots,\xi_{\ell}}\left(f_{\xi_1\ldots\xi_\ell}\right)^{-1}
		\arc(f_{\xi_1\ldots\xi_\ell}(x),4^{-1})
		\subset\arc(x,4^{-1}),
\]	
which is also a nontrivial open interval. For every $y,z\in V_{\ell}(x)$ such that $[y,z]\subset V_{\ell}(x)$
for every $\xi_1,\ldots,\xi_{\ell}\in\{0,1\}$, it hence holds
\[
	f_{\xi_1\ldots\xi_\ell}([y,z])
	\subset\arc(f_{\xi_1\ldots\xi_\ell}(x),4^{-1})
\]	
and therefore
\[
	4^{-1}
	\ge \mu(f_{\xi_1\ldots\xi_\ell}([y,z]))\
	=  \rho(f_{\xi_1\ldots\xi_\ell}(y),f_{\xi_1\ldots\xi_\ell}(z)).
\]
Hence, from Claim \ref{cex1}, it follows
\begin{equation}\label{Ciins}
    \rho (y,z) 
    = \mu([y,z])
    = \mathbb {E}(Z_{\ell,\rho}^{y,z}).
\end{equation}

Any other neighborhood $V$ of $x$, contains an open arc $W$ containing $x$, so that for all $\ell\geq 1$, $W \cap V_{\ell} (x)\subset V$ is and open arc containing $x$. Now it is enough to consider $y,z\in W\cap V_{\ell} (x)$ to get \eqref{Ciins}. This completes the proof that $(\cF,\bp,\rho)$ is not ESCA.
\end{proof} 

%---------------------------------------------------------------------------
\subsection{LECA, but not NEA}\label{sec:exe2}
%---------------------------------------------------------------------------

Let  $\bp=(p,1-p)$ be a non-degenerate probability vector and $\mu$ its associated Bernoulli measure. Without loss of generality, we can assume $p=\max\{p,1-p\}$. Let $f_0,f_1\colon \bS^1\rightarrow\bS^1$ be orientation preserving homeomorphisms, such that there exist two open arcs $I,J\subset \bS^1$ with disjoint closures having the following properties (compare also Figure \ref{fig.3}):
\begin{enumerate}
    \item The extreme point of $\overline J$ are fixed points $y_0$ and $y_1$ of $f_0$ and $f_1$,  respectively. Here we are assuming that $y_0\neq y_1$.
    \item There exists an open arc $J^\ast\subset J$ such that $f_0(J^\ast),f_1(J^\ast)\subset \bS^1\setminus \overline{J}$.
    \item The arc $I$ is (forward) invariant, that is, $f_0(I),f_1(I)\subset I$.
    \item For every $x\in \bS^1\setminus \overline{J}$ there exists $n\geq 0$ such that $f_{\xi}^n(x)\in I$ for all $\xi\in\Sigma_2^{+}$.
    \item There is $r\in(0,1)$ so that
\[
	d\left(f_{\xi}^n(x),f_{\xi}^n(y) \right)\leq r^n d(x,y)
	\quad\text{ for every $n\in\bN$ and $x,y\in I$}.
\]
   \item $d(f_i(x),f_i(y))\ge d(x,y)$ for every $x,y\in J\cap f_0^{-1}(J)\cap f_1^{-1}(J)$ and $i=0,1$.
   \item Every $f_i$ is Lipschitz: there is $c>1$ so that
\[
	d(f_i(x),f_i(y))\leq c d(x,y)\quad
	\text{ for every $x,y\in\bS^1$ and $i=0,1$}.
\]
\end{enumerate}
By (6), for the IFS $\cF=\{f_0,f_1\}$ the triple $(\cF,\bp,d)$ is not $\varepsilon$-LCA. An appropriately chosen example also fails to be NEA (just choose $f_0,f_1$ being expanding in $J$). Since $\overline J \subsetneq \mathbb{S}^1$ and $f^{-1}_0\cup f^{-1}_1\left(\overline J\right)\subset \overline J$ we have that $\cF$ fails to be backward minimal, hence methods from \cite{KatSten} do not apply immediately. Below we prove the following.

\begin{lemma}\label{lema1ex2}
The triple $(\cF,\bp,d)$ is proximal, S, SA, and LECA.
\end{lemma}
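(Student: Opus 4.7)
The plan is to prove S first; the other three properties will then follow from earlier results. Since $(\bS^1,d)$ is compact (hence bounded), Lemma~\ref{sec3lem1} gives $\mathrm{S}\Rightarrow\mathrm{SA}$; Lemma~\ref{SAimpliesprox} gives $\mathrm{SA}\Rightarrow\text{proximal}$; and the direction $\mathrm{SA}\Rightarrow\mathrm{LECA}$ of Lemma~\ref{LECAimpl} requires no further hypothesis (in particular, not NEA, unlike the converse direction in Lemma~\ref{lem:2}). So everything reduces to establishing S.

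To prove S, fix $x,y\in\bS^1$ and introduce the hitting time of the contracting region
\[
\tau(z,\xi)\eqdef\inf\{n\ge 0\colon f_\xi^n(z)\in I\}\quad (z\in\bS^1,\,\xi\in\Sigma_2^+).
\]
By property~(3), $I$ is forward invariant, and by property~(5), distances between two orbits contained in $I$ contract geometrically at rate $r$. Consequently, on the event $N\eqdef\max\{\tau(x,\xi),\tau(y,\xi)\}<\infty$, for every $n\ge 0$,
\[
d\big(f_\xi^{N+n}(x),f_\xi^{N+n}(y)\big)\le r^{n}\,d\big(f_\xi^N(x),f_\xi^N(y)\big)\to 0 \quad\text{as }n\to\infty.
\]
Thus S will follow once we show $\tau(z,\xi)<\infty$ for $\mu$-a.e.\ $\xi$ and every $z\in\bS^1$. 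For $z\in\bS^1\setminus\overline J$ this is immediate from~(4), even deterministically with a bound $n_z$ independent of $\xi$. For $z\in\overline J$, applying~(4) after escaping $\overline J$, it suffices to show that the exit time
\[
\sigma(z,\xi)\eqdef\inf\{n\ge 0\colon f_\xi^n(z)\in\bS^1\setminus\overline J\}
\]
is $\mu$-a.s.\ finite.

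The main obstacle is the a.s.\ finiteness of $\sigma$. Here property~(2) reduces the problem to showing almost sure entry into $J^*$ from any $z\in\overline J$, since a single further application of either $f_0$ or $f_1$ then escapes $\overline J$. The plan is to exploit property~(1): the distinct endpoints $y_0,y_1$ of $\overline J$ are fixed by $f_0,f_1$ respectively, and orientation-preservation combined with $f_i(J^*)\cap\overline J=\emptyset$ forces $f_0$ to push $\overline J\setminus\{y_0\}$ monotonically toward (and ultimately past) $y_1$, and symmetrically for $f_1$. Thus for every $z\in\overline J$ there is a finite word $w=w(z)\in\{0,1\}^*$ and an open neighborhood $U_z\ni z$ with $f_w(U_z)\subset J^*$. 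Compactness of $\overline J$ then yields a uniform $n_0\in\bN$ and $\delta>0$ with
\[
\mu\big\{\xi\colon f_\xi^j(z)\in J^*\text{ for some }0\le j\le n_0\big\}\ge\delta\quad\text{for every }z\in\overline J.
\]
Iterating this estimate on disjoint time blocks of length $n_0$ and using the i.i.d.\ structure of $\mu$ gives $\mu(\sigma(z,\xi)>kn_0)\le(1-\delta)^k\to 0$, so $\sigma<\infty$ $\mu$-a.s. This yields S and hence the lemma.
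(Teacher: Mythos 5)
Your overall architecture is the same as the paper's: prove S by showing that almost surely both orbits eventually enter $I$ and then contract there via (3) and (5), and then get SA, proximality and LECA from the general lemmas (Lemmas \ref{sec3lem1}, \ref{SAimpliesprox}, \ref{LECAimpl}); your treatment of starting points outside $\overline J$ via (4) is also fine. The gap is in your mechanism for almost sure escape from $\overline J$. Property (2) only controls the images of $J^\ast$; it does \emph{not} force $f_0$ to move every point of $\overline J\setminus\{y_0\}$ toward $y_1$. On the subarc of $J$ lying between $y_0$ and $J^\ast$ the hypotheses put no constraint on $f_0$ beyond monotonicity: for $y\in J^\ast$, orientation preservation together with $f_1(y)\notin\overline J$ and $f_1(y_1)=y_1$ forces $f_1$ to map that entire subarc into $\bS^1\setminus\overline J$, so its points do not belong to $J\cap f_0^{-1}(J)\cap f_1^{-1}(J)$ and (6) is vacuous there. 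Hence $f_0$ may perfectly well have further fixed points between $y_0$ and $J^\ast$ (say, be the identity on a small subarc there) while (1)--(7) all hold. For such a system your key sub-claim fails: a point $z$ in that subarc is fixed by $f_0$, while $f_1$ sends it straight into $\bS^1\setminus\overline J$, which is forward invariant (indeed $f_i^{-1}(\overline J)\subset\overline J$ by (1), (2) and orientation preservation, as the paper notes); so no orbit of $z$ ever meets $J^\ast$, ``almost sure entry into $J^\ast$'' has probability zero, and there is no word $w$ with $f_w(U_z)\subset J^\ast$. The compactness/uniform-$\delta$ step and the geometric-trials bound for $\sigma$ then have nothing to stand on.

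What is true, and is all you need, is weaker: fixing $y\in J^\ast$, from every $z\in\overline J$ \emph{one} well-chosen letter exits $\overline J$ in a single step (if $z$ lies between $y$ and $y_1$ apply $f_0$; if between $y_0$ and $y$ apply $f_1$; in both cases orientation preservation, (1) and (2) place the image in $\bS^1\setminus\overline J$). Combined with the forward invariance of $\bS^1\setminus\overline J$ and the i.i.d.\ structure this gives $\mu(\sigma(z,\cdot)>k)\le p^k$ with $p=\max\{p_0,p_1\}<1$, so $\sigma<\infty$ almost surely, and applying (4) at the random exit point completes your argument. This is essentially the content of the paper's Lemma \ref{n0exists}, with the difference that the paper aims the good letter at the arc $W$ between $f_0(y)$ and $f_1(y)$, from which entry into $I$ happens after a time $N$ uniform over $\xi$; that quantitative tail bound is not needed for Lemma \ref{lema1ex2} itself but is used later (Lemmas \ref{topequivex} and \ref{pqp1}).
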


We will construct $D$ a metric that will be  equivalent to $d$, for which $(\cF,\bp,d)$ is NEA, LECA, and ESCA. This will a metric $\rho$  equivalent to $D$ (and hence $d$), such that $(\cF,\bp,\rho)$ is CA.

By (3) and (5), $\cF$ induces a contracting IFS on $I$. Together with (4),  every $x\not\in\overline J$ eventually enters and remains in $I$. On the other hand, it follows from (2) that 
\[
	f_0^{-1}(\overline J),f_1^{-1}(\overline J)\subset \overline J
\]
and there is a set of points in $X\subset\overline J$ and for every $x\in X$ some sequence $\xi=\xi(x)$ such that $f_\xi^n(x)\in X$ for every $n\in\bN$, though other forward iterates under the IFS $\cF$ eventually leave $\overline J$.  Though, as counterpart and first preliminary result we show that for every $x$
\[
	\{\xi\in\Sigma_2^{+}\colon
	f^n_{\xi}(x)\in J \quad \text{for all}\quad n\in\mathbb{N}\}
\] 
has measure zero. For $k\geq 1$ and $x\in\bS^1$, define
\[\begin{split}
	\Gamma_x^0
	&\eqdef \begin{cases}	\Sigma^{+}_2&\text{ if }x\in I,\\
						\emptyset&\text{ otherwise,}
					\end{cases}	\\
     \Gamma_{x}^k
     &\eqdef \{\xi\in\Sigma^{+}_2\colon
     	f_{\xi}^k(x)\in I, f^{k-1}_{\xi}(x)\notin I\},
	\quad
	\Gamma_x
	\eqdef\bigcup_{k\geq 0}\Gamma_x^k.
\end{split}\]

\begin{lemma}\label{n0exists}
	There exists $N\in\bN$ such that 
\begin{align}\label{eqclaimlem1}
	\mu\Big(\bigcup_{k= 0}^{N+1}\Gamma_x^k \Big)>0
    	\quad\text{for every $x\in\bS^1$}.
\end{align}
and for all $m\geq 1$
\begin{align}\label{eqclaimlem2}
	\mu\big(\Gamma_x^{N+m+1}\big)
	\leq \mu\Big(\Sigma_2^{+}\setminus
		\bigcup_{k= 0}^{N+m}\Gamma_x^k \Big)
	\leq p^m.
\end{align}
Moreover, for every $x\in\bS^1$ it holds $\mu\left(\Gamma_x\right)=1$.
\end{lemma}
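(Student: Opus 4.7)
The plan is to prove the three assertions of the lemma in sequence, all built upon a single construction: for each $x\in\bS^1$, a finite word $\omega^x$ with $f_{\omega^x}(x)\in I$ whose length is uniformly bounded in $x$.

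\textbf{Stage 1 (positive measure of entry).} I would produce $\omega^x$ case by case. If $x\in I$, take the empty word. If $x\in\bS^1\setminus\overline J$, property (4) directly supplies $n_x$ such that any word of length $n_x$ lands in $I$. The delicate case is $x\in\overline J$. Here I would argue that for every such $x$ there exists a finite word with $f_{\omega^x}(x)\in J^*$, after which one more step of either map takes us into $\bS^1\setminus\overline J$ by (2), and property (4) supplies finitely more steps into $I$. The existence of this word uses (1), (6), and the observation recorded just above the lemma that $f_0^{-1}(\overline J)\cup f_1^{-1}(\overline J)\subset\overline J$: this implies that the IFS restricted to $\overline J$ cannot admit an invariant trap inside $\overline J\setminus J^*$ without contradicting the non-contraction property (6) on $\tilde J\eqdef J\cap f_0^{-1}(J)\cap f_1^{-1}(J)$. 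Once $\omega^x$ is fixed, continuity of $f_{\omega^x}$ and openness of $I$ give an open neighborhood $U_x\ni x$ on which the same word works, and compactness of $\bS^1$ yields a finite subcover $\{U_{x_1},\ldots,U_{x_r}\}$ with associated words of bounded length. Setting $N$ to be the maximum of these lengths and $p_*\eqdef\min\{p,1-p\}$, for every $x$ the corresponding cylinder has $\mu$-measure at least $p_*^{N+1}>0$, establishing $\mu(\bigcup_{k=0}^{N+1}\Gamma_x^k)>0$.

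\textbf{Stage 2 (exponential decay and conclusion).} The middle inequality in \eqref{eqclaimlem2} is immediate, since by (3) the condition $\xi\in\Gamma_x^{N+m+1}$ forces $f_\xi^k(x)\notin I$ for all $k\le N+m$, so $\Gamma_x^{N+m+1}\subset\Sigma_2^+\setminus\bigcup_{k=0}^{N+m}\Gamma_x^k$. The rightmost bound $\le p^m$ I would obtain by induction on $m$ via the Markov structure: conditioning on a prefix $(\xi_1,\ldots,\xi_{N+m})$ with $f_\xi^k(x)\notin I$ for all $k\le N+m$ and denoting $y=f_\xi^{N+m}(x)$, independence of the next coordinate reduces the inductive step to showing $\sum_{i\colon f_i(y)\notin I}p_i\le p$. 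Since $p=\max\{p,1-p\}$, this in turn reduces to the structural assertion $y\in f_0^{-1}(I)\cup f_1^{-1}(I)$ for every reachable-without-$I$ position $y$ at time $\ge N$, which is the one-step sharpening of the Stage 1 construction. The final conclusion $\mu(\Gamma_x)=1$ then follows from $\mu(\Sigma_2^+\setminus\Gamma_x)\le\mu(\Sigma_2^+\setminus\bigcup_{k=0}^{N+m}\Gamma_x^k)\le p^m$ for every $m$, letting $m\to\infty$.

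\textbf{Main obstacle.} The heart of the proof is the structural content shared by Stage 1 and Stage 2: producing, for every $x\in\overline J$, a finite word taking $x$ out of $\overline J$ through $J^*$, and, more sharply, showing that after at least $N$ steps without entering $I$ every reachable position belongs to $f_0^{-1}(I)\cup f_1^{-1}(I)$. To establish the former I would rule out the existence of an invariant trap in $\overline J\setminus J^*$ by combining the non-contraction (6) on $\tilde J$ (nearby orbits confined to such a trap would have non-decreasing mutual distance under every word) with the fact that $\diam(\overline J\setminus J^*)<\diam\overline J$, while invoking (1) to handle the endpoints $y_0,y_1$. The latter sharper statement, which accounts for the exact rate $p$ rather than a generic geometric rate, is where the hypothesis $p=\max\{p,1-p\}\ge 1/2$ is used essentially, ensuring that whichever single map sends the reachable position into $I$ carries probability at least $1-p$.
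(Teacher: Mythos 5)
Your Stage 1 rests on a claim that is false as stated: it is not true that every $x\in\overline J$ can be sent into $J^*$ by some finite word. Take $x=y_0$. Then $f_0$ fixes $y_0$, while by (1), (2) and orientation-preservation $f_1$ maps the whole closed sub-arc of $\overline J$ between $y_0$ and $J^*$ (in particular $y_0$ itself) into $\bS^1\setminus\overline J$; since, as noted before the lemma, $f_0^{-1}(\overline J)\cup f_1^{-1}(\overline J)\subset\overline J$, the complement of $\overline J$ is forward invariant, so the orbit tree of $y_0$ consists of $y_0$ and points outside $\overline J$ and never meets $J^*$. Moreover, your proposed justification (ruling out an ``invariant trap'' in $\overline J\setminus J^*$ via the non-contraction (6) and a diameter comparison) does not produce a contradiction: (6) is perfectly compatible with both maps acting as weak expansions, even isometries, on a bounded invariant subset, so ``non-decreasing mutual distances'' on a set of smaller diameter proves nothing. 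What actually excludes such traps is the orientation/fixed-endpoint argument just indicated: every point between $y_0$ and $J^*$ leaves $\overline J$ in one step under $f_1$, and every point between $J^*$ and $y_1$ leaves in one step under $f_0$ — which is exactly the mechanism of the paper's proof (a point $y\in J^*$, the arc $W$ between $f_0(y)$ and $f_1(y)$, and the observation that if the current point lies between $y$ and $f_{\xi_1}(y)$ then $f_{\xi_1}$ sends it into $W$). Granting the existence of an entry word into $I$ for every $x$, your compactness argument for \eqref{eqclaimlem1} is fine, but the existence itself needs this different argument.

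The more serious gap is in Stage 2. The structural assertion to which you reduce the bound $p^m$ — that every position reachable in at least $N$ steps without entering $I$ lies in $f_0^{-1}(I)\cup f_1^{-1}(I)$ — is false: again with $x=y_0$ and the all-$0$ word the position is $y_0$ at every time, and $f_1(y_0)$ is only guaranteed to lie outside $\overline J$, from where (4) yields entry into $I$ only after a further, point-dependent (and near $y_0,y_1$ unbounded) number of steps; likewise a point that has just left $\overline J$ need not be one step from $I$ under either map. So the one-step Markov induction does not close, and no choice of $N$ repairs it in the form you state. The paper obtains the exponential bound by a different mechanism: it fixes $y\in J^*$, finds $N$ with $f_\xi^N(W)\subset I$ for all $\xi$ on the arc $W$ between $f_0(y)$ and $f_1(y)$, and shows by orientation that whenever the current point is not in $W$ exactly one symbol can fail to drive it into $W$; hence not entering $I$ by time $N+m$ forces the first $m$ symbols to follow a single ``bad'' branch, so $\Sigma_2^+\setminus\bigcup_{k=0}^{N+m}\Gamma_x^k$ is contained in one cylinder $[\eta_1,\ldots,\eta_m]$ of measure at most $p^m$, while the good first symbol gives the lower bound $1-p$ in \eqref{eqclaimlem1}. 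In particular the role of $p=\max\{p,1-p\}$ is only that the single bad symbol at each step has probability at most $p$, not that some map of probability at least $1-p$ lands directly in $I$.
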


\begin{proof}
Fix $y\in J^\ast$. 
By (4), it holds $f_0(y),f_1(y)\notin\overline{J}$. Let  $K,L\subset\bS^1\setminus J$ be the open arcs with extremes $f_0(y)$ and $y_1$ and $f_1(y)$ and $y_0$, respectively. By (4), there are $k=k(K)\in\bN$ and $\ell=\ell(L)\in\bN$ so that
\[
	f_{\xi}^{k}(K),f_{\xi}^{\ell}(L)
	\in I
    	\quad\text{for every $\xi \in\Sigma_2^{+}$}.
\]	
Let $N\eqdef k+\ell$. By (3), for all $\xi\in\Sigma_2^{+}$
\[
	f_{\xi}^{k+\ell}(K),f_{\xi}^{k+\ell}(L)
	\in I
    	\quad\text{for every $\xi \in\Sigma_2^{+}$}.
\]	
Let $W\subset\bS^1\setminus J$ be the closed arc  with extremes $f_0(y)$ and $f_1(y)$. As $f_0$ and $f_1$ preserve orientation, $f_{\xi}^n$ preserves orientation. Hence, $f_{\xi}^n(W)\subset I$ for all $\xi\in\Sigma_2^{+}$. 

Now, let us prove \eqref{eqclaimlem1} and \eqref{eqclaimlem2}. Fix $x\in\bS^1$. 

\medskip\noindent\textbf{Case $x\in W$.} 
As $f_{\xi}^{k+\ell}(x)\in I$ for every $\xi\in\Sigma_2^{+}$, \eqref{eqclaimlem1} and \eqref{eqclaimlem2} are immediate.

\medskip\noindent\textbf{Case $x\not\in W$.} Let us construct a sequence $\xi\in\Sigma_2^{+}$ such that for all $m\geq 1$
\begin{align}\label{seqxi7.6}
    [\xi_1]\cup[\eta_1,\xi_2]\cup\cdots\cup[\eta_1,\ldots,\eta_{m-1},\xi_{m}]
    \subset \bigcup_{k=0}^{m+N}\Gamma_x^k,
\end{align}
where $\eta_i\in\{0,1\}\backslash\{\xi_i\}$.
Since $x$ is either between $y$ and $f_0(y)$ or between $y$ and $f_1(y)$, there exists $\xi_1\in\{0,1\}$ such that $x$ is between $y$ and $f_{\xi_1}(y)$. As $f_{\xi_1}$ preserves orientation, $f_{\xi_1}(x)\in W$. So that 
\[	
	[\xi_1]\subset\bigcup_{k=0}^{N+1}\Gamma_x^k,
\]	
that is, \eqref{seqxi7.6} holds for $m=1$. Let $\eta_1\in\{0,1\}\backslash\{\xi_1\}$. If $f_{\eta_1}(x)\in W$ then $\bigcup_{k=0}^{N+1}\Gamma_x^k=\Sigma_2^{+}$ which implies \eqref{seqxi7.6} for all $m\geq 2$ and $\xi_m\in\{0,1\}$. If $f_{\eta_1}(x)\notin W$ then $f_{\eta_1}(x)$ is either between $y$ and $f_0(y)$ or between $y$ and $f_1(y)$ so there exists $\xi_2\in\{0,1\}$ such that $f_{\eta_1}(x)$ is between $y$ and $f_{\xi_2}(y)$. As $f_{\xi_2}$ preserves orientation, $f_{\xi_2}(f_{\eta_1}(x))\in W$. So that 
\[
	[\xi_1]\cup[\eta_1,\xi_2]\subset\bigcup_{k=0}^{N+2}\Gamma_x^k,
\]	
that is, \eqref{seqxi7.6} holds for $m=2$. Let $\eta_2\in\{0,1\}\backslash\{\xi_2\}$. If $f_{\eta_2}(f_{\eta_1}(x))\in W$ then $\bigcup_{k=0}^{N+2}\Gamma_x^k =\Sigma_2^{+}$ which implies \eqref{seqxi7.6} for all  $m\geq 3$ and $\xi_m\in\{0,1\}$. If $f_{\eta_2}(f_{\eta_1}(x))\notin W$ then $f_{\eta_2}(f_{\eta_1}(x))$ is either between $y$ and $f_0(y)$ or between $y$ and $f_1(y)$ so that there exists $\xi_3\in\{0,1\}$ such that $f_{\eta_2}(f_{\eta_1}(x))$ is between $y$ and $f_{\xi_3}(y)$. Continuing this process inductively on $m$ we conclude \eqref{seqxi7.6}.

Therefore
\[
	[\xi_1]\subset\bigcup_{k= 0}^{N+1}\Gamma_x^k
	\quad\text{ and hence }\quad
	\mu\Big(\bigcup_{k= 0}^{N+1}\Gamma_x^k \Big)
	\geq 1-p>0,
\]	
and so \eqref{eqclaimlem1} holds. Moreover 
\[
	\Gamma_x^{N+m+1}
	\subset \Sigma_2^{+}\setminus\bigcup_{k= 0}^{m+N}\Gamma_x^k\subset [\eta_1,\ldots,\eta_m]
\]	
so that $\mu(\Gamma_x^{N+m+1})\leq p^m$.
This proves the lemma.
\end{proof}

\begin{proof}[Proof of Lemma \ref{lema1ex2}]
Given any $x,y\in\mathbb{S}^1$,  let $\xi\in\Gamma_x\cap\Gamma_y$ and choose $k\in\bN$  such that $f^k_{\xi}(x),f^k_{\xi}(y)\in I$. By  (4),  it holds
\[
	\lim_{n\to\infty}d(f_{\xi}^{n}(x),f_{\xi}^{n}(y))
	= \lim_{n\to\infty}d(f_{\xi}^{k+n}(x),f_{\xi}^{k+n}(y))
	=0.
\]	
As by Lemma \ref{n0exists}, $\mu(\Gamma_x\cap\Gamma_y)=1$ holds, it follows that $(\mathcal{F},\bp,d)$ is S. By Lemma \ref{SAimpliesprox}, it is SA and proximal. By Lemma \ref{LECAimpl}, it is LECA.
\end{proof}

Fix $\ell\geq1$ and pick $\alpha\in(0,1)$ such that 
\begin{equation}\label{el mmmmm}
	r^\ell c<1
	\quad\text{ and }\quad
	c^{\alpha}
	<c^{(\ell+1)\alpha}
	<\frac{1}{p}.
\end{equation}
The choice of these numbers will be apparent in the proof of Lemma \ref{pqp1}.
Consider the metric $d^{\alpha}$ on $\bS^1$ and define the metric $\hat D$ on $\bS^1$ by 
\[
	\hat D(x,y)
	\eqdef \mathbb{E}\left( \sup_{n\geq 0}Z_{n,d^{\alpha}}^{x,y}\right).
\]	
Since $d$ and $d^{\alpha}$ are  equivalent, $\hat D$ and $d$ are  equivalent by Lemma \ref{lemaABC1}.

\begin{lemma}\label{topequivex}
	It holds
\[
	d^\alpha
	\le \hat D
	\le Cd^\alpha
        \quad\text{ where }\quad
	C\eqdef c^{(N+1)\alpha}
		\Big(1+ 2\sum_{k\geq 1}\left(c^{\alpha}p\right)^k\Big).
\]	
\end{lemma}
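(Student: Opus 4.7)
The lower bound is immediate: taking $n=0$ in the supremum defining $\hat D$ gives $\sup_{n\ge 0} Z_{n,d^\alpha}^{x,y}(\xi) \ge d^\alpha(x,y)$ for every $\xi$, so $\hat D(x,y)\ge d^\alpha(x,y)$. For the upper bound, the plan is to introduce the first hitting times of the invariant arc $I$,
\[
	\tau_x(\xi)\eqdef \inf\{n\ge 0\colon f^n_\xi(x)\in I\},\quad
	\tau_y(\xi)\eqdef \inf\{n\ge 0\colon f^n_\xi(y)\in I\},\quad
	\tau(\xi)\eqdef\max\{\tau_x(\xi),\tau_y(\xi)\},
\]
and derive a pointwise estimate of the form $\sup_{n\ge 0} Z_{n,d^\alpha}^{x,y}(\xi)\le c^{\tau(\xi)\alpha}\, d^\alpha(x,y)$.

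To justify this pointwise bound, first note that by forward invariance (3), once the orbit of a point enters $I$ it remains in $I$ thereafter; hence $f^{\tau(\xi)}_\xi(x),f^{\tau(\xi)}_\xi(y)\in I$. Consequently, for $n\ge \tau(\xi)$, property (5) applied to the shifted sequence $\sigma^{\tau(\xi)}\xi$ gives $d(f^n_\xi(x),f^n_\xi(y))\le r^{n-\tau(\xi)} d(f^{\tau(\xi)}_\xi(x),f^{\tau(\xi)}_\xi(y))\le d(f^{\tau(\xi)}_\xi(x),f^{\tau(\xi)}_\xi(y))$. On the other hand, for every $n\le \tau(\xi)$, iterating the Lipschitz bound (7) yields $d^\alpha(f^n_\xi(x),f^n_\xi(y))\le c^{n\alpha} d^\alpha(x,y)\le c^{\tau(\xi)\alpha}d^\alpha(x,y)$; combining this with the previous display (applied at $n=\tau(\xi)$) and raising to the power $\alpha$ gives the claimed pointwise bound uniformly in $n$.

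Taking expectations then yields $\hat D(x,y)\le d^\alpha(x,y)\cdot \mathbb{E}(c^{\tau\alpha})$. To bound this expectation, I will invoke Lemma~\ref{n0exists}, which gives $\mu(\tau_x> N+m)\le p^m$ and the analogous estimate for $\tau_y$; a union bound yields $\mu(\tau\ge N+m+1)\le 2p^m$ for every $m\ge 1$. Splitting the expectation at $k=N+1$,
\[
	\mathbb{E}(c^{\tau\alpha})
	=\sum_{k=0}^{N+1} c^{k\alpha}\mu(\tau=k)
		+\sum_{k\ge N+2}c^{k\alpha}\mu(\tau=k)
	\le c^{(N+1)\alpha}+2\sum_{k\ge N+2}c^{k\alpha}p^{k-1-N},
\]
where the first sum is bounded by $c^{(N+1)\alpha}\mu(\tau\le N+1)\le c^{(N+1)\alpha}$ and the second uses $\mu(\tau=k)\le\mu(\tau\ge k)\le 2p^{k-1-N}$. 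Changing index $j=k-N-1$, the second sum becomes $2c^{(N+1)\alpha}\sum_{j\ge 1}(c^\alpha p)^j$, which converges because $c^\alpha p<1$ by the choice of $\alpha$ in \eqref{el mmmmm}. This gives exactly $\mathbb{E}(c^{\tau\alpha})\le C$ with $C$ as stated.

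The main technical points are the correct identification of the stopping time $\tau$ and the verification that both orbits are simultaneously in $I$ from time $\tau$ onward (which is exactly where forward invariance (3) is essential, since otherwise one would only know that each orbit individually is in $I$ after its own hitting time). The role of $\alpha$ in condition \eqref{el mmmmm} is purely to guarantee geometric summability $c^\alpha p<1$, and no use is made here of the stronger inequality $c^{(\ell+1)\alpha}<1/p$, which will presumably be needed for the subsequent lemma (\textbf{not} the present one) where $\ell$ enters.
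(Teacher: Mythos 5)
Your proof is correct and takes essentially the same route as the paper: your stopping time $\tau$ partitions $\Sigma_2^{+}$ into exactly the sets $\Gamma_{x,y}^k=\{\tau=k\}$ used there, the pointwise estimate $\sup_{n\ge0}Z_{n,d^{\alpha}}^{x,y}\le c^{\tau\alpha}d^{\alpha}(x,y)$ via properties (3), (5), (7) is the same, and your tail/union bound from Lemma \ref{n0exists} reproduces the paper's bound $\mu(\Gamma_{x,y}^k)\le\mu(\Gamma_x^k)+\mu(\Gamma_y^k)\le 2p^{k-N-1}$ and hence the same constant $C$. You also correctly observe that only $c^{\alpha}p<1$ from \eqref{el mmmmm} is needed here.
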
 

\begin{proof}
Clearly $d^{\alpha}\le \hat D$. Let us show the other inequality.
For $k\geq 1$ and $x,y\in\bS^1$ define
\[
	\Gamma_{x,y}^k
	\eqdef \big\{\xi\in\Sigma^{+}_2\colon f_{\xi}^k(x)\in I, f_{\xi}^k(y)\in I\big\}
		\cap
		\big\{\xi\in\Sigma^{+}_2\colon f_{\xi}^{k-1}(x)\notin I \mbox{ or } 
		f_{\xi}^{k-1}(y)\notin I\big\}.
\]	
If $x,y\in I$, then let $\Gamma_{x,y}^0\eqdef \Sigma^{+}_2$. If $x\notin I$ or $y\notin I$, then let $\Gamma_x^0\eqdef\emptyset$. Note that $\{\Gamma_{x,y}^k\}_k$ is a family of pairwise disjoint sets and $\Gamma_{x,y}^k\subset \Gamma_{x}^k\cup \Gamma_{y}^k $. Moreover, it is immediate from the definition that 
\[
	\Gamma_x\cap\Gamma_y
	\subset \Gamma_{x,y}
	\eqdef \bigcup_{n\ge0}\Gamma^n_{x,y}.
\]
Hence, together with Lemma \ref{n0exists}, it follows
\[
	1 
	= \mu(\Gamma_x\cap\Gamma_y)
	= \mu(\Gamma_{x,y})
	= \sum_{n\geq 0}\mu\left(\Gamma_{x,y}^n\right).
\]

Fix $x,y\in\bS^1$. For every $\xi\in \Gamma_{x,y}^k$, by (7) and (5), we have that
\[
	\sup_{n\geq 0}Z_{n,d^{\alpha}}^{x,y}(\xi)
	= \sup_{n\geq 0}d^{\alpha}(f_{\xi}^{n}(x),f_{\xi}^{n}(y))
	< c^{k\alpha }d^{\alpha}(x,y).
\]		
It follows that
\begin{align*}
    \hat D(x,y)
    &=\mathbb{E}\Big( \sup_{n\geq 0}Z_{n,d^{\alpha}}^{x,y}\Big)
    =\sum_{k\geq 0}\mathbb{E}
    	\Big(\sup_{n\geq 0}Z_{n,d^{\alpha}}^{x,y}\mathbbm{1}_{\Gamma_{x,y}^k}\Big)\\
    &\leq c^{(N+1)\alpha}d^{\alpha}(x,y)
    	\mu\Big(\Gamma_{x,y}^0\cup\ldots\cup \Gamma_{x,y}^{N+1}\Big)
	+d^{\alpha}(x,y)\sum_{k\geq N+2}c^{k\alpha}\mu\Big( \Gamma_{x,y}^k\Big)\\
    &\leq c^{(N+1)\alpha}d^{\alpha}(x,y)\Big(1+ \sum_{k\geq N+2}c^{(k-N-1)\alpha}\big(\mu( \Gamma_{x}^k)
    					+\mu( \Gamma_{y}^k)\big)\Big)\\
    &\leq c^{(N+1)\alpha}d^{\alpha}(x,y)
    	\Big(1+2 \sum_{k\geq N+2}c^{(k-N-1)\alpha}p^{k-N-1}
    \Big)\\
    &\leq Cd^{\alpha}(x,y).
\end{align*}
As $x,y$ were arbitrary, this finishes the proof.
\end{proof}

\begin{lemma}\label{pqp1}
The triple $(\cF,\bp,\hat D)$ is SA, NEA, LECA and ESCA.
\end{lemma}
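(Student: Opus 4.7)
The backbone of the plan is the identity
\[
	\bE(Z_{n,\hat D}^{y,z})
	= \bE_\xi\Big(\sup_{m\ge n} Z_{m,d^\alpha}^{y,z}(\xi)\Big),
	\qquad n\ge 0,
\]
which I will obtain by splitting $\xi\in\Sigma_2^{+}$ as $(i_1,\ldots,i_n,\eta)$: summing over $(i_1,\ldots,i_n)$ with weight $p_{i_1\ldots i_n}$ together with integration over $\eta$ is integration against $\mu$, and $f_\eta^k\circ f_{i_n\ldots i_1}=f_\xi^{k+n}$ turns the inner supremum in the definition of $\hat D$ into $\sup_{m\ge n}Z_{m,d^\alpha}^{y,z}(\xi)$. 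Taking $n=1$, NEA follows from $\sup_{m\ge 1}\le \sup_{m\ge 0}$. For SA I will invoke Lemma~\ref{n0exists}: for $\xi\in\Gamma_y\cap\Gamma_z$ (full measure) both orbits eventually enter $I$ and then contract by~(5), so $Z_{m,d^\alpha}^{y,z}(\xi)\to 0$ and hence $\sup_{m\ge n}Z_{m,d^\alpha}^{y,z}(\xi)\to 0$; dominated convergence, with dominant $\sup_{m\ge 0}Z_{m,d^\alpha}^{y,z}$ integrable thanks to $\hat D\le Cd^\alpha$ from Lemma~\ref{topequivex}, gives $\bE(Z_{n,\hat D}^{y,z})\to 0$. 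LECA is then immediate from Lemma~\ref{lem:2}.

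The only step that takes real work is ESCA. I will introduce the random time $\tau(y,z,\xi)\eqdef\min\{n\ge 0:f_\xi^n(y),f_\xi^n(z)\in I\}$, so that $\{\tau=k\}$ coincides with the set $\Gamma_{y,z}^k$ from the proof of Lemma~\ref{topequivex}. Forward invariance~(3) and contraction~(5) give $Z_{m,d^\alpha}^{y,z}\le r^{(m-\tau)\alpha}Z_{\tau,d^\alpha}^{y,z}$ for $m\ge\tau$; hence on $\{\tau<\ell\}$,
\[
	\sup_{m\ge\ell}Z_{m,d^\alpha}^{y,z}(\xi)
	=Z_{\ell,d^\alpha}^{y,z}(\xi)
	\le r^{(\ell-\tau)\alpha}Z_{\tau,d^\alpha}^{y,z}(\xi)
	\le r^\alpha\sup_{m\ge 0}Z_{m,d^\alpha}^{y,z}(\xi),
\]
while on $\{\tau\ge\ell\}$ the trivial bound $\sup_{m\ge\ell}\le\sup_{m\ge 0}$ applies. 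Integrating and using the backbone identity with $n=\ell$ will yield
\[
	\bE(Z_{\ell,\hat D}^{y,z})
	\le r^\alpha\,\hat D(y,z)+(1-r^\alpha)\,\bE\big[\sup_{m\ge 0}Z_{m,d^\alpha}^{y,z}\,\mathbbm{1}_{\{\tau\ge\ell\}}\big].
\]

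To finish I will bound the error term as in the proof of Lemma~\ref{topequivex}: on $\{\tau=k\}$ property~(7) together with the contraction past $\tau$ gives $\sup_{m\ge 0}Z_{m,d^\alpha}^{y,z}\le c^{k\alpha}d^\alpha(y,z)$, and Lemma~\ref{n0exists} gives $\mu(\{\tau=k\})\le 2p^{k-N-1}$ for $k\ge N+2$. Since $c^\alpha p<1$ by~\eqref{el mmmmm} and $\hat D\ge d^\alpha$, for $\ell\ge N+2$ the error-to-$\hat D$ ratio is bounded by
\[
	\frac{2\,p^{-N-1}(c^\alpha p)^\ell}{1-c^\alpha p},
\]
which tends to $0$ as $\ell\to\infty$. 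Choosing $\ell$ large enough to make this $<1$ will give $\bE(Z_{\ell,\hat D}^{y,z})\le\lambda\hat D(y,z)$ with $\lambda<1$ uniformly in $y\ne z$, which is ESCA (indeed the stronger $\ell$-ECA). The hardest point of the whole argument is exactly this ESCA estimate: the other three properties reduce almost formally to the backbone identity, but ESCA genuinely requires combining the interior contraction~(5), the exponential escape estimate of Lemma~\ref{n0exists}, and the calibration of $\alpha$ in~\eqref{el mmmmm} making $c^\alpha p<1$.
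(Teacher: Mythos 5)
Your proposal is correct, and for three of the four properties it essentially coincides with the paper's argument: the identity $\bE(Z_{n,\hat D}^{y,z})=\bE\big(\sup_{m\ge n}Z_{m,d^{\alpha}}^{y,z}\big)$ is exactly the paper's computation (carried out there only for $n=1$, which is all NEA needs), SA follows either directly by monotone/dominated convergence as you do or, as in the paper, from S of $(\cF,\bp,d)$ plus Lemmas \ref{topequivex}, \ref{sec3lem2} and \ref{sec3lem1}, and LECA comes from Lemma \ref{lem:2} in both treatments. The genuine divergence is ESCA. The paper proves a \emph{local} estimate: for each $x$ it builds a neighborhood $V_x$ as a finite intersection of preimages of $I$ along the branches in $\bigcup_{k\le N}\Gamma_x^k$, splits $\bE\big(Z_{(\ell+1)n,d^{\alpha}}^{y,z}\big)$ over $\bigcup_{k\le n}\Gamma_x^k$ and its complement, and then transfers the bound from $d^{\alpha}$ to $\hat D$ through the strong-equivalence constant $C$ of Lemma \ref{topequivex}; this is why the threshold $1/(2C)$ and the condition $r^\ell c<1$ enter in \eqref{constantemedia}. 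You instead work directly in the metric $\hat D$, via the stopping time $\tau$ (whose level sets are precisely the sets $\Gamma_{y,z}^k$, using forward invariance of $I$) and the identity for general $n$, arriving at the \emph{global} bound $\bE(Z_{\ell,\hat D}^{y,z})\le\big(r^{\alpha}+(1-r^{\alpha})\epsilon_\ell\big)\hat D(y,z)$ with $\epsilon_\ell\eqdef 2p^{-N-1}(c^{\alpha}p)^{\ell}/(1-c^{\alpha}p)$, uniformly in $y\ne z$; this is $\ell$-ECA for $(\cF,\bp,\hat D)$ and hence ESCA with $V_{(x,x)}=\bS^1\times\bS^1$. Your route is both leaner and stronger: it only uses $c^{\alpha}p<1$ from \eqref{el mmmmm} (the requirement $r^\ell c<1$ plays no role), and since it already yields eventual contraction on average globally, the subsequent appeal to Theorem \ref{pro:1} becomes unnecessary — Proposition \ref{prolem1:1} applied to $\hat D$ gives the CA metric directly — whereas the paper's local construction establishes ESCA per se (in line with its remark that the parameter $\ell\ne 1$ is needed) without upgrading it to a global estimate.
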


\begin{proof}
By Lemmas \ref{topequivex} and \ref{sec3lem2}, $(\cF,\bp,\hat D)$ is S. Hence, by which Lemma \ref{sec3lem1}, $(\cF,\bp,\hat D)$ is SA, proving the first assertion.

To show NEA and LECA, check that
\begin{align*}
    \mathbb{E}\left(Z_{1,\hat D}^{x,y}\right)
    &=p\,\hat D(f_0(x),f_0(y))+(1-p)\,\hat D(f_1(x),f_1(y))\\
    &=p\,\mathbb{E}\left(\sup_{n\geq 0}Z_{n,d^{\alpha}}^{f_0(x),f_0(y)}\right)	
    	+(1-p)\,\mathbb{E}\left(\sup_{n\geq 0}Z_{n,d^{\alpha}}^{f_1(x),f_1(y)}\right)\\
    &= p\,\mathbb{E}\left(\mathbbm{1}_{[0]}
    		\sup_{n\geq 0}Z_{n+1,d^{\alpha}}^{x,y}\right)
    	+(1-p)\,\mathbb{E}\left(\mathbbm{1}_{[1]}
		\sup_{n\geq 0}Z_{n+1,d^{\alpha}}^{x,y}\right)
	=\mathbb{E}\left(\sup_{n\geq 1}Z_{n,d^{\alpha}}^{x,y}\right).
\end{align*}
This implies $\mathbb{E}(Z_{1,\hat D}^{x,y})\le \hat D(x,y)$, that is, $(\cF,\bp,\hat D)$ is NEA.
Hence, as $(\cF,\bp,\hat D)$ is NEA. As $(\cF,\bp,\hat D)$ is NEA and SA, Lemma \ref{lem:2}  property LECA follows.

Now, let us prove that $(\cF,\bp,\hat D)$ is  ESCA. 
Let $N$ be as in Lemma \ref{n0exists}, $\ell$ as in \eqref{el mmmmm}, and $C$ is as in Lemma \ref{topequivex}. 
Recalling that $r,p\in(0,1)$, we can fix $n\in\bN$ sufficiently large such that 
\begin{equation}\label{constantemedia}
    (r^\ell c)^{n\alpha}+c^{n(\ell+1)\alpha}p^{n-N}
    <\frac{1}{2C}.
\end{equation}
 Recall that  $\bigcup_{k= 0}^{N+1} \Gamma_x^{k}\neq\emptyset$ for every  $x\in\bS^1$. Note that $\bigcup_{k=0}^{N+1}\Gamma_x^k$ is covered by cylinders of length $N$ and, in particular, $\{f_\xi^N\colon \xi\in \bigcup_{k=0}^{N+1}\Gamma_x^k\}$ is a collection of at most $2^N$ homeomorphisms.
Hence, given $x\in\bS^1$, the set
\[
	V_x
	\eqdef \bigcap_{\xi\in\bigcup_{k= 0}^N 
		\Gamma_x^{k}}\left(f_{\xi}^N\right)^{-1}(I),
\]		
is, as an intersection of finitely many open intervals containing $x$, an open neighborhood of $x$. Moreover, for every $y,z\in V_x$ it holds
\begin{align*}
    \bE&\left(Z_{(\ell+1)n,d^{\alpha}}^{y,z}\right)
    =\bE\left(
    		\mathbbm{1}_{\bigcup_{k=0}^{n} \Gamma_x^k}
		Z_{(\ell+1)n,d^{\alpha}}^{y,z}\right)
	+\bE\left(
		\mathbbm{1}_{\Sigma^{+}_2
			\setminus \bigcup_{k= 0}^{n} \Gamma_x^{k}}
		Z_{(\ell+1)n,d^{\alpha}}^{y,z}\right)\\
&=\bE\left(
    		\mathbbm{1}_{\bigcup_{k=0}^{n} \Gamma_x^k}
		d^\alpha(f_{\sigma^n(\xi)}^{\ell n}(f_\xi^n(y)),
				f_{\sigma^n(\xi)}^{\ell n}(f_\xi^n(z))\right)
	+\bE\left(
		\mathbbm{1}_{\Sigma^{+}_2
			\setminus \bigcup_{k= 0}^{n} \Gamma_x^{k}}
		Z_{(\ell+1)n,d^{\alpha}}^{y,z}\right)\\
	{\tiny{\text{(by (7))}}}			
    &\leq c^{n\alpha}r^{\ell n\alpha}
    	d^{\alpha}(y,z)\mu\left( \bigcup_{k= 0}^{n} \Gamma_x^{k} \right)	
	+c^{(\ell+1)n\alpha}d^{\alpha}(y,z)
		\mu\left(\Sigma^{+}_2\setminus  \bigcup_{k= 0}^{n} \Gamma_x^k\right)\\
    &\leq \left((r^\ell c)^{n\alpha}+c^{(\ell+1)n\alpha}p^{n-N}\right)d^{\alpha}(y,z)\\
    &\leq \frac{1}{2C} d^{\alpha}(y,z).
\end{align*}
By Lemma \ref{topequivex}, for every $y,z\in V_x$
\[
	\frac{\bE\Big(Z_{(m+1)N,\hat D}^{y,z}\Big)}{\hat D(y,z)}
	\leq C\frac{\bE\Big(Z_{(m+1)N,d^\alpha}^{y,z}\Big)}
			{d^{\alpha}(y,z)}\leq \frac{1}{2}, 
\]			
which implies that
\[
	\sup_{(y,z)\in V_x\times V_x}\frac{\bE\Big(Z_{(m+1)N,\hat D}^{y,z}\Big)}{\hat D(y,z)}
	\leq \frac{1}{2}<1.
\]	
Since $x$ is arbitrary, this proves ESCA, and completes the proof.
\end{proof}

\begin{remark}
The constant $\frac{1}{2}$ in equation \eqref{constantemedia} is insignificant, in fact, we can change $\frac{1}{2}$ for any $t\in(0,1)$ and get the same result.
\end{remark}

By Theorem \ref{pro:1},  Lemma \ref{pqp1} allows us to conclude that there exists a metric $D$ strongly equivalent to $\hat D$ such that $(\cF,\bp,D)$ is CA. Hence, $D$ is strongly equivalent to $d^{\alpha}$. 

\begin{remark}
Note that in the proof of Lemma \ref{pqp1} we show that $(\cF,\bp,d^{\alpha})$ is ESCA. The triple $(\cF,\bp,d^{\alpha})$ is an example of a system that is ESCA such that the $\ell$ required in the definition is not the constant $1$. 
\end{remark}

\bibliographystyle{alpha}

\end{document}